\PassOptionsToPackage{prologue,dvipsnames}{xcolor} % see https://github.com/borisveytsman/acmart/issues/406#issuecomment-667180341
\documentclass[hidelinks,onefignum,onetabnum]{siamart250211}

\usepackage{lipsum}
\usepackage{amsfonts}
\usepackage{graphicx}
\usepackage[dvipsnames]{xcolor}
\usepackage{comment}
\usepackage{epstopdf}
\usepackage{algorithmic}
\ifpdf
  \DeclareGraphicsExtensions{.eps,.pdf,.png,.jpg}
\else
  \DeclareGraphicsExtensions{.eps}
\fi

\usepackage{amssymb}
\usepackage{arydshln} % for \hdashline
\usepackage{tikz}
\usepackage{tikz-cd}
\usepackage{stmaryrd} % maps from
\usepackage{tablefootnote} % for \tablefootnote
\usepackage{graphbox}
\usepackage{dashrule}
\usepackage{mathtools} % for \coloneqq
\usepackage[shortlabels]{enumitem}
\usepackage{subcaption}
\usepackage{accents}
\usepackage{pgfplots}
\usepackage{hyperref}
\hypersetup{
    colorlinks=true,
    citecolor=red, 
    linkcolor=blue,
    urlcolor=black
}
\usepgfplotslibrary{groupplots}
\pgfplotsset{compat=1.18}

\pgfplotscreateplotcyclelist{igrcolors}{
    {blue, line width=1.1pt},
    {red!80!black, line width=1.1pt},
    {green!50!black, line width=1.1pt}
}
\usepackage{cite}

\usepackage{aliascnt}
\newaliascnt{remark}{theorem}
\newtheorem{remark}[remark]{Remark}
\aliascntresetthe{remark}
\crefname{remark}{Remark}{Remarks}
\Crefname{remark}{Remark}{Remarks}

\newaliascnt{example}{theorem}

\aliascntresetthe{example}
\crefname{example}{Example}{Examples}
\Crefname{example}{Example}{Examples}

\newaliascnt{assumption}{theorem}
\newtheorem{assumption}[assumption]{Assumption}
\aliascntresetthe{assumption}
\crefname{assumption}{Assumption}{Assumptions}
\Crefname{assumption}{Assumption}{Assumptions}

\crefname{section}{section}{sections}
\Crefname{section}{Section}{Sections}

\crefname{subsection}{section}{sections}
\Crefname{subsection}{Section}{Sections}

\crefname{subsubsection}{section}{sections}
\Crefname{subsubsection}{Section}{Sections}

\crefname{appendix}{Appendix}{Appendices}
\Crefname{appendix}{Appendix}{Appendices}

\crefformat{equation}{(#2#1#3)}
\Crefformat{equation}{(#2#1#3)}

\crefrangeformat{equation}{(#3#1#4)--(#5#2#6)}
\Crefrangeformat{equation}{(#3#1#4)--(#5#2#6)}

\crefmultiformat{equation}{(#2#1#3)}{ and (#2#1#3)}
  {, (#2#1#3)}{, and (#2#1#3)}
\Crefmultiformat{equation}{(#2#1#3)}{ and (#2#1#3)}
  {, (#2#1#3)}{, and (#2#1#3)}

\renewcommand*{\dot}[1]{%
  \accentset{\mbox{\large\bfseries .}}{#1}}
\renewcommand*{\ddot}[1]{%
  \accentset{\mbox{\large\bfseries .\hspace{-0.25ex}.}}{#1}}

\headers{IGR Shocks for Compressible Euler}{W. Barham, B. K. Tran, B. S. Southworth and F. Schäfer}

\title{Shock solutions for the one-dimensional information geometric regularization of compressible flow\thanks{\funding{WB was supported by the Director's Fellowship at Los Alamos National Laboratory, project number 20251151PRD1. BKT was supported by Subcontract C6818 issued under NNSA Contract No. 89233218CNA000001. BSS was supported by the DOE Office of Advanced Scientific Computing Research Applied Mathematics program through Contract No. 89233218CNA000001. FS was supported by the Air Force Office of Scientific Research under award number FA9550-23-1-0668 (Information Geometric Regularization for Simulation and Optimization of Supersonic Flow), the Predictive Science Academic Alliance Program (PSAAP Award DE-NA0004261 - “The Center for Information Geometric Mechanics and Optimization
(CIGMO)”) managed by the NNSA (National
Nuclear Security Administration) Office of Advanced Simulation, and the Alfred P. Sloan Foundation via a Sloan Research Fellowship in Mathematics.}
}
}

\author{
    William Barham\thanks{Los Alamos National Laboratory, Theoretical Division, Los Alamos, NM 87545}
    \and
    Brian K. Tran\thanks{University of Colorado Boulder, Department of Applied Mathematics, Boulder, CO 80309. \\ California State University, Long Beach, Department of Mathematics and Statistics, Long Beach, CA 90840.}
    \and
    Ben S. Southworth\footnotemark[2]
    \and
    Florian Schäfer\thanks{New York University, Courant Institute of Mathematical Sciences, New York, NY 10012}
}

\usepackage{amsopn}

\makeatletter
\newcommand{\subalign}[1]{%
  \vcenter{%
    \Let@ \restore@math@cr \default@tag
    \baselineskip\fontdimen10 \scriptfont\tw@
    \advance\baselineskip\fontdimen12 \scriptfont\tw@
    \lineskip\thr@@\fontdimen8 \scriptfont\thr@@
    \lineskiplimit\lineskip
    \ialign{\hfil$\m@th\scriptstyle##$&$\m@th\scriptstyle{}##$\hfil\crcr
      #1\crcr
    }%
  }%
}
\makeatother
\ifpdf
\hypersetup{
  pdftitle={Shock solutions for the one-dimensional information geometric regularization of compressible flow},
  pdfauthor={W. Barham, B. K. Tran, B. S. Southworth and F. Schäfer}
}
\fi

\begin{document}

\maketitle
\renewcommand{\thefootnote}{}
\footnotetext{\scriptsize{E-mail: \texttt{wbarham@lanl.gov}, \texttt{brian.tran@csulb.edu}, \texttt{southworth@lanl.gov}, \texttt{florian.schaefer@nyu.edu}}}
\renewcommand{\thefootnote}{\arabic{footnote}}

% REQUIRED
\begin{abstract}
    The information geometric regularization (IGR) is an inviscid regularization of the compressible Euler equations that alters the geometry of Lagrangian characteristics to prevent trajectories from crossing in finite time. Previous work on IGR established global strong solutions in one dimension, explored thermodynamic effects of the model, and enabled large-scale simulations of compressible flow.
    However, a fundamental question that remains is how this regularization alters the structure and 
    regularity of a shock-like solution.
    
    We prove existence, uniqueness modulo translation, and regularity of transonic compressive IGR shock profiles in one spatial dimension. The analysis applies to the full thermodynamic compressible Euler--IGR model with a general equation of state, subject to mild convexity hypotheses. A traveling-wave ansatz reduces the Euler--IGR equations to a degenerate second-order scalar equation for the density profile. At the sonic crossing, the elliptic coefficient degenerates: the density profile remains continuous, but its derivative diverges. The profile is a classical solution away from this single point, while at the degeneracy it retains quantified H\"older and Sobolev regularity. We also analyze the vanishing-regularization limit, showing that the shock width scales like $\sqrt{\alpha}$ and that the IGR profiles converge to the entropy-admissible Euler shock. 
    %\wjb{note the abstract is 244 words right now; the maximum is 250}
\end{abstract}

% REQUIRED
\begin{keywords}
    compressible flow $\cdot$  shocks $\cdot$  traveling-waves $\cdot$ information geometric regularization
\end{keywords}

% REQUIRED
\begin{MSCcodes}
    35Q31 $\cdot$ 76L05 $\cdot$ 76N10
\end{MSCcodes}

\tableofcontents

\section{Introduction}

%-------------------------------------------------------------------------------------------
\subsection{Shock waves in the Euler equations}

Nonlinear steepening is a generic feature of the one-dimensional compressible Euler equations causing smooth initial data to lose regularity in finite time. In particular, compressive waves can sharpen into shock discontinuities while the conserved quantities themselves remain bounded, and classical solutions then cease to exist. Across a moving discontinuity, conservation of mass, momentum, and energy yields the Rankine--Hugoniot jump conditions \cite{courant1948supersonic, rankine1870thermodynamic, hugoniot1887propagation}. These relations are necessary for a discontinuity to define a weak solution, but they do not by themselves select the physically relevant shock. One therefore imposes an admissibility criterion, typically expressed through entropy production; for the compressive shocks considered here, this is reflected in the Lax compressivity condition \cite{lax1957hyperbolic, smoller1994shock}. In a shock-attached frame, this is the familiar transonic picture in which the flow passes from a supersonic upstream state to a subsonic downstream state.

A complementary point of view replaces the discontinuity by a continuous internal layer connecting the end states and then studies the limit as the regularization vanishes. When this vanishing-regularization limit produces a weak solution satisfying the entropy inequality, one recovers the admissible Euler shock \cite{lax1957hyperbolic, toro2009riemann, riemann_fluid_flow, dafermos2026hyperbolic}. Classical parabolic regularizations provide a standard route to such entropy solutions \cite{LeVeque_2002}, 
while nondispersive conservative regularizations can yield continuous, weakly singular shock profiles
\cite{clamond2018non,pu2018weakly, guelmame2022hamiltonian, guelmame2024hamiltonian}. In this paper, we consider the analogous program for the information geometric regularization (IGR) of the compressible Euler equations \cite{cao2023information}, which smooths shock fronts without introducing viscous dissipation. 

%-------------------------------------------------------------------------------------------
\subsection{Information geometric regularization (IGR)\nopunct}

% \wjb{Rewrite with more subtlety...}

% In the geometric interpretation of the compressible Euler equations, shock formation corresponds to the loss of invertibility of the deformation map transporting the fluid, or equivalently, to the emergence of transported singular measures \cite{cao2023information}. IGR prevents this collapse by introducing a logarithmic barrier associated with the negative entropy functional, which penalizes concentration of mass and enforces strict feasibility within the information geometric manifold of admissible states \cite{cao2023information, cao2024information}. From this perspective, the regularization does not act as a viscous diffusion that oversmooths the solution. Instead, it prevents the formation of a transported singular measure while allowing a weaker singularity to persist in the profile. Our analysis shows that transonic compressive IGR shocks exhibit exactly this intermediate behavior: the density profile is generally not $C^1$ at the sonic crossing and its derivative diverges; however, the singularity is sufficiently mild that the shock does not transport a singular measure. 

IGR mitigates shock formation by modifying the geometry of Lagrangian particle motion so that trajectories are prevented from crossing and instead asymptotically approach one another.
In effect, this results in the introduction of an additional nonlocal pressure force, the entropic pressure, which counteracts the concentration mechanism that drives shock formation \cite{cao2023information,cao2024information}. 
IGR foregoes viscous diffusion that smooths the solution over time. 
Thus, it preserves the fine-scale structures damped by traditional parabolic regularizations \cite{cao2023information,wilfong2025simulatingmanyenginespacecraftexceeding,radhakrishnan2026shocks}. 
Instead, it prevents the onset of the most singular transport pathology while still allowing a weaker singularity to persist in the shock profile. Our analysis shows that transonic compressive IGR shocks exhibit exactly this intermediate behavior: the density profile remains continuous but is generally not $C^1$ at the sonic crossing, and its derivative diverges there; nevertheless, the singularity is mild enough that the shock does not transport a singular measure.

%-------------------------------------------------------------------------------------------
\subsection{Traveling-wave reduction}

To analyze IGR shock layers, we seek solutions depending only on the
shock-frame coordinate $\xi=x-ct$. The traveling-wave ansatz
\cite{dafermos2026hyperbolic} reduces the PDE to an autonomous system of ODEs
for the profile. 
The conserved fluxes allow us to eliminate the remaining
variables, yielding a single second-order equation for the density profile. This
equation has degenerate elliptic character, with a quadratic dependence on the
profile slope. 
Its highest-order coefficient degenerates at the sonic state,
where the profile passes from supersonic to subsonic flow. This 
degeneracy is the main analytic difficulty in the paper. See
\Cref{fig:igr_tw_profile} for a numerically computed IGR shock profile; details
of the computation are given in \Cref{appendix:numerics}.

\begin{figure} 
\centering 
\IfFileExists{figures/data/igr_tw_profile.csv}{%
  \def\igrprofilecsv{figures/data/igr_tw_profile.csv}%
}{%
  \def\igrprofilecsv{data/igr_tw_profile.csv}%
}

\begin{tikzpicture}
\begin{groupplot}[
    group style={
        group size=3 by 2,
        horizontal sep=1.6cm,
        vertical sep=1.4cm,
    },
    width=0.32\textwidth,
    height=0.25\textwidth,
    xmin=-1, xmax=1,
    grid=both,
    cycle list name=igrcolors,
    tick label style={font=\small},
    label style={font=\small},
    title style={font=\small},
]

\nextgroupplot[
    ylabel={$R$},
    title={Density},
]
\addplot table [x=xi_centered, y=R, col sep=comma] {\igrprofilecsv};

\nextgroupplot[
    ylabel={$U$},
    title={Velocity},
]
\addplot table [x=xi_centered, y=U, col sep=comma] {\igrprofilecsv};

\nextgroupplot[
    ylabel={$S$},
    title={Entropic pressure},
]
\addplot table [x=xi_centered, y=S, col sep=comma] {\igrprofilecsv};

\nextgroupplot[
    ylabel={$\dot R$},
    xlabel={$\xi-\xi_s$},
    title={Density derivative},
]
\addplot table [x=xi_centered, y=R_xi, col sep=comma] {\igrprofilecsv};

\nextgroupplot[
    ylabel={$\dot U$},
    xlabel={$\xi-\xi_s$},
    title={Velocity derivative},
]
\addplot table [x=xi_centered, y=U_xi, col sep=comma] {\igrprofilecsv};

\nextgroupplot[
    ylabel={$\dot S$},
    xlabel={$\xi-\xi_s$},
    title={Entropic-pressure derivative},
]
\addplot table [x=xi_centered, y=S_xi, col sep=comma] {\igrprofilecsv};

\end{groupplot}
\end{tikzpicture} 
\caption{Numerically computed IGR shock wave profile for $\alpha=0.01$.} 
\label{fig:igr_tw_profile} 
\end{figure}

On either side of the sonic crossing, the first-order formulation of the density
equation has a Riccati-type structure. This observation motivates the change of
variables used in the analysis: once monotonicity of the density has been
established on each branch, density can be used as the independent variable,
which linearizes the quadratic slope dependence and yields an explicit
one-sided solution formula.

%-------------------------------------------------------------------------------------------
\subsection{Paper overview}

%The information geometric regularization (IGR) provides a conservative regularization of the compressible Euler equations through the introduction of an entropic pressure. In contrast to classical viscous regularizations, we will show that the resulting shock profiles exhibit a novel sonic degeneracy associated with the transonic condition and the flux relations. At the sonic density, the density profile remains continuous while its derivative diverges. Despite this divergence, the degeneracy suppresses the flux term appearing in the traveling-wave equation, allowing the profile to cross the sonic state without generating a singular measure defect in the weak formulation. This structure allows the two one-sided solutions to be glued together to form a global weak solution with quantitative Hölder regularity at the sonic crossing and global Sobolev regularity. This sonic degeneracy is a unique feature of the IGR traveling-wave equations, owing to the IGR coupling ansatz of the entropic pressure which is obtained by an elliptic solve \cite{cao2023information, cao2024information}, with the corresponding elliptic operator along a shock degenerating at the sonic point. This behavior contrasts with classical viscous or dispersive regularizations, for which shock profiles are smooth.

The goal of this work is to rigorously establish compressive shock solutions for the one-dimensional IGR model. In particular, we prove existence, uniqueness modulo translation, and regularity of compressive traveling-wave shock profiles connecting states that satisfy the Rankine--Hugoniot relations \cite{courant1948supersonic} and the Lax entropy conditions \cite{lax1957hyperbolic}. 

We proceed in several steps. We first formulate a weak version of the IGR shock equation and introduce monotone admissibility. We then prove that admissible monotone profiles are strictly monotone, which permits a change of variables using density as the independent coordinate. This reduces the profile equation to a linear equation for the squared slope. The resulting one-sided formulas are used to analyze the sonic degeneracy, prove regularity, and glue the two branches into a global weak profile.

In \Cref{sec:TW-main}, we derive the traveling-wave equations for the one-dimensional IGR model, identify the sonic degeneracy, and state the shock admissibility criteria. The analysis applies to the full thermodynamic compressible Euler equations with a general equation of state, subject to mild assumptions discussed in \Cref{appendix:barotropic_eos,appendix:eos-alt-criterion}.

In \Cref{sec:structure-shock}, we study the reduced phase portrait, prove strict monotonicity and nonsonic regularity of admissible profiles, and derive the one-sided formulas for the squared slope.

In \Cref{sec:global}, we establish global existence, regularity, and uniqueness of the IGR traveling-wave solution. Although the density derivative diverges at the sonic state, the profile crosses continuously, and the two one-sided branches glue into a global weak solution with H\"older regularity at the sonic crossing and global Sobolev regularity.

In \Cref{sec:asymptotics}, we analyze the small-$\alpha$ regime and show that the width of the internal layer scales like $\sqrt{\alpha}$. Consequently, the IGR shock profile collapses to a discontinuity as $\alpha\to0$, and the traveling-wave solutions converge to the entropy-admissible Euler shock determined by the Rankine--Hugoniot relations and the Lax compressive conditions \cite{lax1957hyperbolic, courant1948supersonic}.

Finally, in \Cref{appendix:numerics}, we describe the numerical method used to compute the IGR shock profiles shown in the numerical results.

%============================================================================================
\section{A traveling-wave equation for IGR shocks}\label{sec:TW-main}

\sloppy Consider the one-dimensional information geometric regularization (IGR) of the compressible Euler equations \cite{cao2023information, cao2024information}
\begin{subequations}\label{eq:IGR}
\begin{align}
\partial_t \rho + (\rho u)_x &= 0 \,, \\
\partial_t (\rho u) + \big(\rho u^2 + \tilde{p}(\rho,e) + \Sigma\big)_x &= 0 \,, \\
\partial_t E + \big((E + \tilde{p}(\rho,e) + \Sigma)u\big)_x &= 0 \,, \\
\rho^{-1}\Sigma - \alpha(\rho^{-1}\Sigma_x)_x &= 2\alpha u_x^2 \,, \label{eq:IGR-d}
\end{align}
\end{subequations}
where $\rho,u,E,e$ are the fluid density, velocity, total energy density, and specific internal energy, respectively.
Furthermore, $\alpha>0$ is the IGR regularization parameter and $\Sigma$ denotes the \emph{entropic pressure}
introduced by the IGR closure. Note that we consider the IGR regularization with the entropic pressure additively combined with the physical pressure, 
as done in \cite{wilfong2025simulatingmanyenginespacecraftexceeding}, although other couplings are possible \cite{higr, codes_regularization, igr_thermo}. The physical pressure
$\tilde{p}=\tilde{p}(\rho,e)$ is prescribed by an equation of state, with total energy $
E=\rho e+\frac12\rho u^2$, so that $e=\frac{E}{\rho}-\frac12 u^2$.
For generality, we leave the precise form of $\tilde{p}(\rho,e)$ unspecified, although below we will impose mild assumptions ensuring that the resulting IGR shock profile equation is well-behaved.

We seek traveling-wave solutions of the form $
\rho(x,t)=R(\xi)$, $u(x,t)=U(\xi)$, $E(x,t)=\mathcal{E}(\xi)$, $\Sigma(x,t)=S(\xi)$ where $\xi:=x-ct$, satisfying the far-field conditions $R(\xi)\to R_\pm$, $U(\xi)\to U_\pm$, $\mathcal{E}(\xi)\to \mathcal{E}_\pm$, $S(\xi)\to 0$ as $\xi\to\pm\infty$.

\begin{remark}
    We require $R(\xi) > 0$, since $R$ represents the mass density. Therefore, we also require that $R_\pm > 0$. 
\end{remark}
Substituting $\partial_t=-c\partial_\xi$ and $\partial_x=\partial_\xi$ into \Cref{eq:IGR} yields
\begin{subequations}\label{eq:TW}
\begin{align}
\tag{TW$a$}-c\frac{d}{d\xi}R+\frac{d}{d\xi}(RU) &= 0 \,, \label{eq:TWa} \\
\tag{TW$b$}-c\frac{d}{d\xi}(RU)+\frac{d}{d\xi}\big(RU^2+\tilde{p}(R,e)+S\big) &=0 \,, \label{eq:TWb} \\
\tag{TW$c$}-c\frac{d}{d\xi}\mathcal{E} + \frac{d}{d\xi}\big((\mathcal{E} + \tilde{p}(R,e) + S)U\big) &= 0 \,, \label{eq:TWc} \\
\tag{TW$d$}R^{-1}S-\alpha \frac{d}{d\xi}\left(R^{-1}\frac{d}{d\xi}S\right)-2\alpha \left( \frac{d}{d\xi}U\right)^2 &=0 \,. \label{eq:TWd}
\end{align}
\end{subequations}
Throughout, we use prime notation for derivatives with respect to $R$ and dot notation for
derivatives with respect to $\xi$:
$\frac{d}{dR}f(R)=f'(R)$, $\frac{d}{d\xi}g(\xi)=\dot{g}(\xi)$.
Then the chain rule reads $\frac{d}{d\xi}(f\circ g)=f'(g)\dot{g}$.

\paragraph{First integrals and the TW energy curve}
Integrating \Crefrange{eq:TWa}{eq:TWc} gives constant mass, momentum, and energy fluxes, respectively:
\begin{equation}\label{eq:fluxes}
R(U-c)=:m,\qquad
mU+\tilde{p}(R,e)+S=:q,\qquad
(\mathcal{E}+\tilde{p}(R,e)+S)U-c\mathcal{E}=:k.
\end{equation}
In particular,
\begin{equation}\label{eq:U-from-R}
U=c+\frac{m}{R},
\qquad
S=q-mc-\frac{m^2}{R}-\tilde{p}(R,e).
\end{equation}
Using $\tilde{p}(R,e)+S=q-mU$ in the energy flux in \Cref{eq:fluxes} gives
$(\mathcal{E}+q-mU)U-c\mathcal{E}=k$.

In the genuine shock regime considered here, we assume the corresponding mass flux is nonzero, $m \neq 0$. Since $\mathcal{E}=Re+\tfrac12 RU^2$ and $U=c+m/R$, this determines an explicit internal-energy curve $e=e(R)$:
\begin{equation}\label{eq:e-explicit}
    e(R)=e_0+\frac{A}{R}+\frac{m^2}{2R^2},
    \qquad
    e_0:=\frac{k-qc+\frac{m}{2}c^2}{m},
    \qquad
    A:=mc-q.
\end{equation}
We then define the \emph{reduced pressure} along the traveling-wave curve by
\begin{equation}\label{eq:reduced-pressure}
    p(R):=\tilde{p}\big(R,e(R)\big).
\end{equation}
With this definition, the TW relations \Cref{eq:fluxes} reduce to the same algebraic structure as in the barotropic setting. Ensuring convexity of $p$ on the density interval traversed by the profile yields restrictions on admissible equations of state; see \Cref{appendix:barotropic_eos,appendix:eos-alt-criterion}. In the remainder of the paper we assume these criteria hold.
Hence, along any profile,
$U(\xi)=c+\frac{m}{R(\xi)}$ and 
$S(\xi)=q-mc-\frac{m^2}{R(\xi)}-p(R(\xi)).$
These identities motivate the scalar profile equation derived next; the associated Rankine--Hugoniot relations are recorded in \Cref{sec:igr-admissibility}.

%-------------------------------------------------------------------------------------------
\subsection{The IGR shock equation}\label{sec:igr-shock-equation}

Fix fluxes $(c,m,q)$ and the reduced pressure $p(R)$ induced by the traveling-wave energy curve \Cref{eq:e-explicit} via \Cref{eq:reduced-pressure}. It is convenient to record the entropic pressure along a profile as a function of density:
\begin{equation}\label{eq:stress-function}
    \mathcal{S}(R):=q-mc-\frac{m^2}{R}-p(R),
\end{equation}
so that
\begin{equation}\label{eq:S-as-function-of-R}
    S(\xi)=\mathcal{S}(R(\xi)).
\end{equation}
Using $U(\xi)=c+m/R(\xi)$ from \Cref{eq:U-from-R} and differentiating \Cref{eq:S-as-function-of-R} gives
$\dot{U}(\xi)=-\frac{m}{R(\xi)^2}\,\dot{R}(\xi),$
$\dot{S}(\xi)=\mathcal{S}'(R(\xi))\,\dot{R}(\xi),$ and 
$\dot{U}(\xi)^2=\frac{m^2}{R(\xi)^4}\,\dot{R}(\xi)^2$.
Substituting these identities into the IGR closure \Cref{eq:TWd} yields a scalar second-order ODE for the density profile:
\begin{equation}\label{eq:R-ODE-general}
    \frac{\mathcal{S}(R)}{R}
    -\alpha\frac{d}{d\xi}\Biggl(\frac{\mathcal{S}'(R)}{R}\,\dot{R}\Biggr)
    -2\alpha\,\frac{m^2}{R^4}\dot{R}^2=0.
\end{equation}
The traveling-wave problem is to find a heteroclinic solution $R(\xi)$ of \Cref{eq:R-ODE-general} such that $R(\xi)\to R_\pm$, $\dot{R}(\xi)\to 0$ as $\xi\to\pm\infty$,
and then reconstruct $U$ and $S$ from \Cref{eq:U-from-R,eq:S-as-function-of-R}. In the full thermodynamic compressible Euler--IGR setting, one may additionally recover $e(\xi)=e(R(\xi))$ from \Cref{eq:e-explicit} and
$\mathcal{E}(\xi)=R(\xi)e(\xi)+\tfrac12 R(\xi)U(\xi)^2$.

\subsubsection{Degeneracy at a sonic density}\label{sec:degeneracy-sonic-density}
Note that the coefficient of the second-order term in \Cref{eq:R-ODE-general} is proportional to $\mathcal{S}'(R)/R$, where $\mathcal{S}'(R)=\frac{m^2}{R^2}-p'(R)$.
Then, the profile in \Cref{eq:R-ODE-general} is degenerate at densities $R_s$ such that $\mathcal{S}'(R_s)=0$, equivalently,
\begin{equation}\label{eq:sonic-density}
\mathcal{S}'(R_s)=0
\quad\Longleftrightarrow\quad
\frac{m^2}{R_s^2}=p'(R_s)
\quad\Longleftrightarrow\quad
(U|_{R=R_s}-c)^2=p'(R_s).
\end{equation}
We refer to such $R_s$ as \emph{sonic densities}. We now establish the uniqueness of this sonic density under suitable assumptions on the pressure.

\paragraph{Lax/transonic sign change}
The analysis below is formulated directly in terms of the sign conditions
\begin{equation}\label{eq:transonic-sign}
\frac{m^2}{R_-^2}-p'(R_-)>0,\qquad
\frac{m^2}{R_+^2}-p'(R_+)<0.
\end{equation}
For the compressive $1$-shock branch considered here, the reduced barotropic Euler characteristic speed is
$\lambda_1(R,U)=U-\sqrt{p'(R)}$,
and the $1$-shock Lax inequalities \cite{lax1957hyperbolic} read $
\lambda_1(R_-,U_-)>c>\lambda_1(R_+,U_+)$.

Since $U_\pm-c=m/R_\pm$ and the left inequality implies $m>0$, these inequalities are equivalent to \Cref{eq:transonic-sign}.
In particular, by continuity of $p'$ there exists at least one sonic density $R_s\in(R_-,R_+)$ satisfying \Cref{eq:sonic-density}.
In \Cref{appendix:barotropic_eos,appendix:eos-alt-criterion}, we provide mild assumptions on the physical pressure $\tilde{p}(\rho,e)$ to ensure that the reduced pressure is convex, $p''(R) \geq 0$ for all $R >0$. Assuming this is the case, one has
\begin{equation} \label{eq:Sdoubleprime}
\mathcal{S}''(R)=\frac{d}{dR}\Bigl(\frac{m^2}{R^2}-p'(R)\Bigr)
=-\frac{2m^2}{R^3}-p''(R)<0,
\end{equation}
so $\mathcal{S}'$ is strictly decreasing on $(0,\infty)$ and the sonic density in \Cref{eq:sonic-density} is unique. Thus, $\mathcal{S}$ is strictly concave, and $R_s$ is the unique global maximizer of $\mathcal{S}$.

The sonic density plays a central role in the structure of IGR shock profiles. As proved later in \Cref{sec:global}, the density derivative $\dot R$ diverges at the sonic state, but this singularity is sufficiently mild that the profile remains H\"{o}lder continuous there. 
This will allow us to quantify the regularity of the profile across the IGR shock despite the loss of classical smoothness at the sonic crossing. Thus, the inclusion of the entropic pressure in the IGR model yields enhanced regularity relative to a purely discontinuous shock description, even though the profile must still be interpreted in the weak sense because classical differentiability fails at a single point.

%-------------------------------------------------------------------------------------------
\subsubsection{IGR shock admissibility criteria}\label{sec:igr-admissibility}

Taking $\xi\to\pm\infty$ in the integrated relations \Cref{eq:fluxes} and using $S(\pm\infty)=0$ yields the Rankine--Hugoniot conditions \cite{courant1948supersonic}
\begin{equation}\label{eq:RH}
    R_\pm(U_\pm-c)=m,\qquad
    mU_\pm+p(R_\pm)=q,
\end{equation}
together with the energy-flux relation fixing $k$ (equivalently, fixing the TW energy curve \Cref{eq:e-explicit}). In particular, if $(R_\pm,U_\pm)$ satisfy \Cref{eq:RH}, then the wave speed and the mass and momentum fluxes are uniquely determined by
\begin{subequations}\label{eq:wavespeed-and-fluxes}
\begin{align}
    c&=\frac{R_+U_+-R_-U_-}{R_+-R_-},\qquad
    m=R_-(U_--c)=R_+(U_+-c),\\
    q&=mU_-+p(R_-)=mU_+ + p(R_+),
\end{align}
\end{subequations}
and the far-field conditions $S(\pm\infty)=0$ are equivalently $\mathcal{S}(R_\pm)=0$.

We focus on \emph{compressive} (shock-like) connections and label the end states so that $R_-<R_+$.
Equivalently, with \(m=R_\pm(U_\pm-c)>0\), this ordering is
the same as \(U_->U_+\).
For the compressive branch studied here, we impose monotone increasing density profiles as the admissible class (see \cite{riemann_fluid_flow} for a detailed discussion of monotonicity for shocks). This is formalized in \Cref{sec:mono-Z}, where strict monotonicity is then proved for nonconstant admissible profiles.

%-------------------------------------------------------------------------------------------
\subsection{Standing assumptions and main theorem}\label{sec:standing-assumptions}

\begin{assumption}\label{assump:main}
Unless stated otherwise, we assume throughout:
\begin{itemize}
\item $R_\pm>0$ with $R_-\neq R_+$, and $(R_\pm,U_\pm)$ satisfy the Rankine--Hugoniot relations \Cref{eq:RH} with $S(\pm\infty)=0$ (so that $c,m,q$ are given by \Cref{eq:wavespeed-and-fluxes}, and $k$ is fixed by the energy-flux condition);
\item compressive ordering: $R_-<R_+$;
\item the end states satisfy the $1$-shock Lax inequalities for the reduced barotropic Euler subsystem, $U_- - \sqrt{p'(R_-)} > c > U_+ - \sqrt{p'(R_+)}$,
which is equivalent to the transonic sign change \Cref{eq:transonic-sign} under the orientation $m>0$ implied by the left Lax inequality;
\item the reduced pressure $p\in C^2((0,\infty))$ satisfies one of the convexity admissibility criteria in \Cref{appendix:barotropic_eos} or \Cref{appendix:eos-alt-criterion} (in particular $p''(R)\ge 0$ for $R>0$), and $p''$ is locally Lipschitz in a neighborhood of the unique sonic density $R_s$; equivalently, $p\in C^{2,1}$ near $R_s$.
\end{itemize}
\end{assumption}
Note that these assumptions imply $m\neq0$, so that we explicitly disqualify constant profiles. Admissible profiles are selected by a monotonicity principle introduced in \Cref{sec:mono-Z}; in the present setting this selection implies strict monotonicity. We now state a summary version of the main result.

\begin{theorem}[Existence/uniqueness/regularity of IGR shock profiles]\label{thm:main}
Under the standing assumptions above, there exists a unique (modulo translation) admissible global transonic traveling-wave profile $R$
connecting $R_-$ to $R_+$. This profile is strictly increasing, continuous across the sonic crossing, and
twice continuously differentiable away from the crossing. It uniquely determines
$U$, $e$, $S$, and $\mathcal E$, and together $(R,U,\mathcal E,S)$ solve the traveling-wave system
\Crefrange{eq:TWa}{eq:TWd} globally in the weak sense, with $e$ recovered from the TW energy curve
\Cref{eq:e-explicit}. 
\end{theorem}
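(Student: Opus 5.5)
The plan is to reduce \Cref{eq:R-ODE-general} to a first-order problem using density as the independent variable. On any interval where $\dot R>0$, set $W(R):=\dot R^2$ viewed as a function of $R$, so that $\ddot R=\tfrac12 W'(R)$. Expanding the derivative in \Cref{eq:R-ODE-general} gives
\[
\frac{\mathcal S(R)}{R}-\alpha\Bigl(\frac{\mathcal S'(R)}{R}\Bigr)' W-\frac{\alpha}{2}\frac{\mathcal S'(R)}{R}W'-2\alpha\frac{m^2}{R^4}W=0 .
\]
This is a \emph{linear} first-order ODE for $W$, whose leading coefficient $\mathcal S'(R)/R$ vanishes only at $R_s$. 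I would first prove that admissible (monotone nondecreasing) nonconstant profiles are strictly increasing, so that the change of variables is legitimate.

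\textbf{Strict monotonicity and regularity away from $R_s$.} Suppose $\dot R(\xi_0)=0$ at a point with $R(\xi_0)\ne R_s$. Near $\xi_0$ the equation is a regular second-order ODE, and the local theory gives $\ddot R(\xi_0)=\mathcal S(R(\xi_0))/(\alpha\mathcal S'(R(\xi_0)))$. By strict concavity \eqref{eq:Sdoubleprime} and $\mathcal S(R_\pm)=0$, we have $\mathcal S>0$ on $(R_-,R_+)$. Hence $\ddot R(\xi_0)$ is nonzero, and in fact $\xi_0$ is a strict extremum. This contradicts monotonicity unless $R(\xi_0)\in\{R_\pm\}$. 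That case is excluded because the constant state is the unique solution with that Cauchy data. Classical ODE regularity then gives $C^2$ on each branch $\{R<R_s\}$ and $\{R>R_s\}$.

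\textbf{One-sided formulas.} On $(R_-,R_s)$ and on $(R_s,R_+)$, solve the linear equation for $W$ by an integrating factor, imposing $W(R_\pm)=0$ as the far-field condition. Concretely, writing $a(R)=\mathcal S'(R)/R$, the equation becomes
\[
\bigl(\mu W\bigr)'=\frac{2\mu\,\mathcal S}{\alpha R\,a}
\]
with an explicit $\mu$. On each side this determines $W$ uniquely, since the other homogeneous solutions blow up at $R_\pm$, where $a\ne0$.

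Next I would check that $W>0$ on the open branches, using the sign agreement between $\mathcal S>0$ and $a$ (positive on the left branch, negative on the right) together with the orientation of integration. I would also check that $\int d R/\sqrt{W}$ diverges at $R_\pm$. This follows from the linearization: $W\sim C(R-R_\pm)^2$, so the divergence is logarithmic and the end states are attained only as $\xi\to\pm\infty$. With these facts, $\xi(R)=\int dR/\sqrt{W}$ reconstructs each branch uniquely up to translation.

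\textbf{Sonic gluing (main obstacle).} Near $R_s$ we have $a(R)\approx a'(R_s)(R-R_s)$ with $a'(R_s)=\mathcal S''(R_s)/R_s<0$, using $p\in C^{2,1}$. The right-hand side is then $\sim\mathcal S(R_s)/(R-R_s)$, so $W$ blows up like $|\log|R-R_s||$ or a power of $1/|R-R_s|$. I expect the step requiring the most care to be a precise asymptotic for $W$ as $R\to R_s^\pm$ from the explicit formula. The Lipschitz bound on $p''$ is what controls the remainder.

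This asymptotic gives two things. First, $\int_{R_s}dR/\sqrt W<\infty$, so $R_s$ is reached at a finite $\xi_s$ from both sides. We fix translation by $R(\xi_s)=R_s$ and glue; the result is continuous with $\dot R\to\infty$, and the Hölder exponent comes from the asymptotic. Second, the flux $a\dot R=a\sqrt W\to0$ at $R_s$. This vanishing is exactly what is needed to verify the weak form of \eqref{eq:TWd}: integrating against a test function and splitting at $\xi_s$, the boundary terms from each side are $\pm a\sqrt W\,\varphi$, and these tend to $0$. So no singular measure arises, and $\dot U^2\propto W$ is locally integrable.

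Finally, \eqref{eq:TWa}--\eqref{eq:TWc} hold weakly because the fluxes in \eqref{eq:fluxes} are constant by construction. We recover $U$, $S$, $e$, $\mathcal E$ from \eqref{eq:U-from-R}, \eqref{eq:S-as-function-of-R}, and \eqref{eq:e-explicit}. Uniqueness modulo translation follows because each one-sided $W$ is unique and the gluing point is forced.
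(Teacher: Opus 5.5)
Your overall route is exactly the paper's, and the existence half goes through once the sonic asymptotics are filled in. That route is: $\dot R^2$ as a function of $R$; the linear equation solved by an integrating factor on each nonsonic branch; logarithmic tails at $R_\pm$; power or logarithmic blow-up at $R_s$; and vanishing of the flux $a\sqrt W$ to exclude a point mass. The gap is in uniqueness. You justify that $W(R_\pm)=0$ pins down $W$ ``since the other homogeneous solutions blow up at $R_\pm$, where $a\ne0$,'' and this is backwards. Because $\mathcal S'(R_\pm)\neq0$, the coefficient $P=-2/R-2p''/\mathcal S'$ and the source $Q=2\mathcal S/(\alpha\mathcal S')$ are continuous up to $R_\pm$. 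So on the left branch the general solution is $W=\mu^{-1}\bigl(C+\int_{R_-}^R\mu\,Q\bigr)$ with $\mu$ continuous and positive at $R_-$, and the homogeneous part $C/\mu$ is bounded and nonvanishing there. (Homogeneous solutions do blow up at $R_s$ when $a_s>0$, which is where $a$ vanishes.) Hence $W(R_\pm)=0$ is a genuine constraint that must be \emph{proved} for an arbitrary admissible profile. The definition of a weak profile only prescribes $R\to R_\pm$, not $\dot R\to0$. The paper devotes \Cref{lem:tail-flattening} to this, via a Riccati comparison for the flux $a\dot R$. Within your framework there is a shorter fix. As $R\downarrow R_-$ one has $W(R)\to C/\mu(R_-)\ge0$. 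If $C>0$, then $\int_{R_-}dR/\sqrt W<\infty$, so the inverse map $R\mapsto\xi$ stays bounded below as $R\downarrow R_-$. This contradicts the fact that the profile maps $(-\infty,\xi_s)$ increasingly onto $(R_-,R_s)$. Thus $C=0$, and likewise at $R_+$.

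Second, ``the gluing point is forced'' needs $\{R=R_s\}$ to be a single point, but your monotonicity argument only treats zeros of $\dot R$ where $R(\xi_0)\neq R_s$. A plateau $R\equiv R_s$ on an interval is not excluded by it, and such plateaus of different lengths would give profiles not related by translation. It is ruled out by testing \eqref{eq:weak-profile} with $\varphi$ supported in the plateau, which forces $\mathcal S(R_s)=0$ and contradicts $\mathcal S(R_s)>0$ (Step 3 of \Cref{thm:strict-mono}).

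There are also two lesser points. First, the ``local theory'' you invoke near $\xi_0$ presupposes classical regularity. This must be bootstrapped from $R\in W^{1,2}_{\mathrm{loc}}$; the paper shows the flux $a\dot R$ lies in $W^{1,1}_{\mathrm{loc}}$ on nonsonic intervals and iterates. Second, in the sonic step you should identify the power as $a_s=-2p''(R_s)/\mathcal S''(R_s)$ and note that $a_s<2$, because $\mathcal S''(R_s)=-2m^2/R_s^3-p''(R_s)$ with $m\neq0$. That strict inequality is precisely what yields $a\sqrt W\sim|R-R_s|^{1-a_s/2}\to0$ and $\int\sqrt W\,dR<\infty$, i.e.\ $\dot R\in L^2_{\mathrm{loc}}$. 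An unspecified ``power of $1/|R-R_s|$'' would not suffice. In the power case you also need the renormalized amplitude $|R-R_s|^{a_s}W$ to have a \emph{positive} limit, which follows from $W>0$.
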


The proof, including the H\"older, Sobolev, tail-decay, and away-from-degeneracy
higher-regularity statements, is given in \Cref{sec:global}, especially
\Cref{thm:global-glue}, \Cref{thm:unique-translation}, and
\Cref{thm:full-solution}.

%============================================================================================
\section{Structure of the IGR shock profile}\label{sec:structure-shock}

Before establishing global existence and regularity in \Cref{sec:global}, we record the basic phase-portrait structure of the traveling-wave equation. We identify the nonsonic equilibria associated with the end states, prove strict monotonicity and nonsonic regularity for admissible weak profiles, and then reduce the profile equation to a linear first-order equation for the squared slope $Z=\dot R^{\,2}$. This yields canonical one-sided solution formulas and the reconstruction of one-sided profile branches.

%-------------------------------------------------------------------------------------------
\subsection{Equilibria and asymptotic states}

We first build intuition about the solution profile of the IGR shock equation by looking at its 
phase-portrait character. We find that the asymptotic values of the density profile are 
hyperbolic fixed points, and that we can formally interpret the traveling-wave profile 
as a heteroclinic orbit connecting these hyperbolic saddle points.

\paragraph{Planar form away from sonic states}
Expanding the derivative in \Cref{eq:R-ODE-general} gives
\begin{equation}\label{eq:R-ODE-expanded}
    \frac{\mathcal{S}(R)}{R}
    -\alpha\frac{\mathcal{S}'(R)}{R}\ddot{R}
    -\alpha\left(\frac{\mathcal{S}''(R)}{R}-\frac{\mathcal{S}'(R)}{R^2}+\frac{2m^2}{R^4}\right)\dot{R}^2
    =0.
\end{equation}
On intervals where $\mathcal{S}'(R)\neq 0$, i.e., $R \neq R_s$, this is equivalent to
\begin{equation}\label{eq:planar}
\begin{cases}
\dot{R}=V,\\[0.5ex]
\dot{V}=\displaystyle
\frac{\mathcal{S}(R)}{\alpha\,\mathcal{S}'(R)}
-\left(
\frac{\mathcal{S}''(R)}{\mathcal{S}'(R)}
-\frac{1}{R}
+\frac{2m^2}{R^3\mathcal{S}'(R)}
\right)V^2.
\end{cases}
\end{equation}

\begin{lemma}[Nonsonic equilibria and linearization]\label{lem:equilibria}
Assume $R_\ast>0$ satisfies $\mathcal{S}(R_\ast)=0$ and $\mathcal{S}'(R_\ast)\neq 0$.
Then $(R_\ast,0)$ is an equilibrium of \Cref{eq:planar}, and the Jacobian at this point is
\[
J(R_\ast,0)=
\begin{pmatrix}
0 & 1\\[0.5ex]
\frac{1}{\alpha} & 0
\end{pmatrix},
\]
with eigenvalues $\lambda_\pm=\pm \alpha^{-1/2}$. In particular, $(R_\ast,0)$ is a hyperbolic saddle point (see, e.g., \cite{PETERS2000287, smoller1994shock}).
\end{lemma}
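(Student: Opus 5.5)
The plan is a direct computation on the planar system \Cref{eq:planar}, which is legitimate near $(R_\ast,0)$. Since $\mathcal{S}'(R_\ast)\neq 0$ and $\mathcal{S}'$ is continuous, there is an open neighborhood $I$ of $R_\ast$ with $I\subset(0,\infty)$ on which $\mathcal{S}'$ does not vanish. On $I\times\mathbb{R}$ the vector field
\[
F(R,V)=\Bigl(V,\ \tfrac{\mathcal{S}(R)}{\alpha\,\mathcal{S}'(R)}-g(R)V^2\Bigr),
\qquad
g(R):=\frac{\mathcal{S}''(R)}{\mathcal{S}'(R)}-\frac{1}{R}+\frac{2m^2}{R^3\mathcal{S}'(R)},
\]
is $C^1$. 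This holds because $p\in C^2$, so $\mathcal{S}\in C^2$, and hence $\mathcal{S}/\mathcal{S}'$ is $C^1$ and $g$ is continuous.

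First I check the equilibrium. At $(R_\ast,0)$ the first component is $V=0$. The second component equals $\mathcal{S}(R_\ast)/(\alpha\mathcal{S}'(R_\ast))-0=0$, since $\mathcal{S}(R_\ast)=0$.

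Next I compute the Jacobian entries.
\begin{itemize}
\item The first row is $(\partial_R V,\partial_V V)=(0,1)$.
\item The entry $\partial_V$ of the second component is $-2g(R)V$, which vanishes at $V=0$.
\item For $\partial_R$ of the second component, the term $-g'(R)V^2$ need not be computed: it is multiplied by $V^2$, so it is $O(V^2)$ and drops at $V=0$. This is convenient, because $g$ is merely continuous and need not be differentiable.
\end{itemize}
To make the last point rigorous without differentiating $g$, I would compute the partial derivative directly from the definition. The difference quotient of $g(R)V^2$ in $R$ at $V=0$ is identically zero. The difference quotient in $V$ at fixed $R$ tends to $0$ as $V\to0$. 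Since both partials are continuous, $F$ is $C^1$.

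The remaining entry comes from the quotient rule:
\[
\frac{d}{dR}\frac{\mathcal{S}}{\alpha\mathcal{S}'}=\frac{\mathcal{S}'^2-\mathcal{S}\mathcal{S}''}{\alpha\mathcal{S}'^2}.
\]
At $R_\ast$ we have $\mathcal{S}(R_\ast)=0$, so this equals $1/\alpha$. Hence
\[
J(R_\ast,0)=\begin{pmatrix}0&1\\ 1/\alpha&0\end{pmatrix}.
\]
Its characteristic polynomial is $\lambda^2-1/\alpha$, with roots $\pm\alpha^{-1/2}$. These are real, nonzero and of opposite sign, so the equilibrium is a hyperbolic saddle.

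The only delicate point is regularity: whether $F$ is $C^1$, so that linearization and the Hartman--Grobman/stable manifold theorems apply. This is handled by the $V^2$ structure described above.
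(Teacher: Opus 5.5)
Your equilibrium check and Jacobian computation are correct and coincide with the paper's proof. The equilibrium follows from $V=0$ and $\mathcal{S}(R_\ast)=0$, the quotient rule gives $\partial_R\dot V|_{V=0}=\alpha^{-1}(\mathcal{S}/\mathcal{S}')'(R_\ast)=1/\alpha$, and the characteristic polynomial $\lambda^2-1/\alpha$ gives the eigenvalues $\pm\alpha^{-1/2}$.

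The regularity paragraph you added, however, claims something that is false under \Cref{assump:main}.

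\emph{Why $F$ is not $C^1$.} Near $R_\ast=R_\pm$ the pressure is only $C^2$, since the extra Lipschitz hypothesis on $p''$ is imposed only near $R_s$ (and a Lipschitz $p''$ would not give $C^1$ anyway). So $\mathcal{S}''$, and hence $g$, are merely continuous. At a point with $V\neq 0$, the $R$-partial of $g(R)V^2$ would be $g'(R)V^2$, which need not exist. Your difference-quotient argument only looks along the line $V=0$ and says nothing off it. So $F$ is not $C^1$ on $I\times\mathbb{R}$ in general. You yourself note that $g$ need not be differentiable, which contradicts your conclusion.

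\emph{What the $V^2$ structure does give.} It gives Fr\'echet differentiability of $F$ at the single point $(R_\ast,0)$. Since $\mathcal{S}/\mathcal{S}'\in C^1$ near $R_\ast$ and $g$ is bounded there, $F_2(R,V)=\alpha^{-1}(R-R_\ast)+o(|R-R_\ast|+|V|)$. That is all the lemma's statement needs, because the saddle claim is made at the level of the linearization.

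\emph{How to get Hartman--Grobman or the stable manifold theorem.} If you want those, work with the $(R,W)$ system \eqref{eq:RW-system} instead of \eqref{eq:planar}.
\begin{itemize}
\item Its right-hand side involves only $\mathcal{S}$ and $\mathcal{S}'$, so it is $C^1$ near $(R_\ast,0)$.
\item Its linearization there is off-diagonal with entries $R_\ast/\mathcal{S}'(R_\ast)$ and $\mathcal{S}'(R_\ast)/(\alpha R_\ast)$, so it has the same eigenvalues $\pm\alpha^{-1/2}$.
\item The change of variables $W=\mathcal{S}'(R)V/R$ is a $C^1$ diffeomorphism near $(R_\ast,0)$, because $\mathcal{S}'\in C^1$ and $\mathcal{S}'(R_\ast)\neq 0$. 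It carries solutions of \eqref{eq:planar} to solutions of \eqref{eq:RW-system}, so the saddle structure transfers.
\end{itemize}
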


\begin{proof}
At $V=0$, the second equation in \Cref{eq:planar} reduces to
$\dot{V}=\mathcal{S}(R)/(\alpha \mathcal{S}'(R))$, so equilibria satisfy $V=0$ and $\mathcal{S}(R)=0$.
Linearizing at $(R_\ast,0)$ gives
\[
\partial_R(\dot{R})=0,\quad \partial_V(\dot{R})=1,\quad \partial_V(\dot{V})|_{V=0}=0,\quad
\partial_R(\dot{V})|_{V=0}=\frac{1}{\alpha}\frac{d}{dR}\!\left(\frac{\mathcal{S}}{\mathcal{S}'}\right)_{R=R_\ast}.
\]
Since $\mathcal{S}(R_\ast)=0$ and $\mathcal{S}'(R_\ast)\neq 0$,
$\frac{d}{dR}\!\left(\frac{\mathcal{S}}{\mathcal{S}'}\right)_{R=R_\ast}
=\frac{(\mathcal{S}')^2-\mathcal{S}\mathcal{S}''}{(\mathcal{S}')^2}\Big|_{R=R_\ast}
=1$.
Hence the stated Jacobian and eigenvalues follow.
\end{proof}

\begin{remark}[Asymptotic states]
Because $S(\pm\infty)=0$ and $S(\xi)=\mathcal{S}(R(\xi))$, one has
$\mathcal{S}(R_\pm)=0$, implying $(R_\pm,0)$ are equilibria of \Cref{eq:planar}. Under \Cref{eq:sonic-density,eq:transonic-sign}, $\mathcal{S}'(R_\pm)\neq 0$, so
$(R_\pm,0)$ are nonsonic hyperbolic saddle points of \Cref{eq:planar}. 
Thus, at the level of the nonsonic phase portrait, a sufficiently regular
traveling-wave shock should be viewed as a heteroclinic connection between the
saddle equilibria $(R_-,0)$ and $(R_+,0)$.
\end{remark}

%-------------------------------------------------------------------------------------------
\subsection{Monotonicity of the IGR shock profile}\label{sec:mono-Z}

We now formulate weak solutions of the IGR shock profile equation and impose admissibility by restricting to the monotone branch connecting the two end states. This choice is motivated by the heteroclinic structure of the shock profiles, is consistent with the sharp-shock limit $\alpha\to 0$, and excludes oscillatory boundary-layer behavior. We then show that every nonconstant admissible weak profile is strictly monotone and restricts to a classical solution away from the sonic density.

\begin{definition}[Weak IGR shock profile]\label{def:weak-profile}
A weak IGR shock profile is a function $R:\mathbb{R}\to [R_-,R_+]$ such that:
\begin{enumerate}[label=(\roman*)]
\item $R\in W^{1,2}_{\mathrm{loc}}(\mathbb{R})$;
\item $R$ connects the prescribed end states, i.e.,
$\lim_{\xi\to-\infty}R(\xi)=R_-$ and
$\lim_{\xi\to+\infty}R(\xi)=R_+$.
\item $R$ satisfies the IGR profile equation \Cref{eq:R-ODE-general} in the sense of distributions: for every
$\varphi\in C_c^\infty(\mathbb{R})$,
\begin{equation}\label{eq:weak-profile}
\int_{\mathbb{R}}
\alpha\,\frac{\mathcal{S}'(R)}{R}\,\dot{R}\,\dot{\varphi} \,d\xi
+
\int_{\mathbb{R}}
\left(
\frac{\mathcal{S}(R)}{R}
-2\alpha\,\frac{m^2}{R^4}\dot{R}^2
\right)\varphi\,d\xi
=0.
\end{equation}
\end{enumerate}
\end{definition}

\begin{definition}[Admissible monotone profile class]\label{def:M}
The admissible monotone profile class $\mathcal{M}$ consists of all weak IGR shock profiles
$R:\mathbb{R}\to [R_-,R_+]$ that are nondecreasing on $\mathbb{R}$.
\end{definition}

Before proving further results about the IGR shock profile, we first record the root and concavity structure of $\mathcal{S}$.

\begin{lemma}[Root structure of $\mathcal{S}$]\label{lem:S-two-roots}
Under the standing assumptions (\Cref{assump:main}), one has:
\begin{enumerate}[label=\textup{(\roman*)}]
\item $\mathcal{S}''(R)<0$ for all $R>0$. In particular, there is a unique
$R_s\in(R_-,R_+)$ such that $\mathcal{S}'(R_s)=0$, $\mathcal{S}'(R)>0$ for $R\in[R_-,R_s)$, and
$\mathcal{S}'(R)<0$ for $R\in(R_s,R_+]$.

\item $\mathcal{S}(R)>0$ for all $R\in(R_-,R_+)$.

\item The only zeros of $\mathcal{S}$ in $(0,\infty)$ are $R_-$ and $R_+$.
\end{enumerate}
\end{lemma}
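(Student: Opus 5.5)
The plan is to exploit strict concavity of $\mathcal{S}$ on $(0,\infty)$ together with the two known zeros $\mathcal{S}(R_\pm)=0$, which hold by the Rankine--Hugoniot relations \Cref{eq:RH} and \Cref{eq:wavespeed-and-fluxes}. Indeed, $q-mc-m^2/R_\pm-p(R_\pm)=mU_\pm - m(U_\pm - c) - mc\cdot 0\ldots$; more directly, $q = mU_\pm + p(R_\pm)$ and $m^2/R_\pm = m(U_\pm-c)$ give $\mathcal{S}(R_\pm)=mU_\pm+p(R_\pm)-mc-m(U_\pm-c)-p(R_\pm)=0$.

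For (i), I will use the computation \Cref{eq:Sdoubleprime}: $\mathcal{S}''(R)=-2m^2/R^3-p''(R)$. Since $m\neq 0$ (indeed $m>0$ from the left Lax inequality) and $p''\ge 0$ by \Cref{assump:main}, this gives $\mathcal{S}''<0$ on $(0,\infty)$. Thus $\mathcal{S}'$ is strictly decreasing. The transonic sign change \Cref{eq:transonic-sign} says exactly $\mathcal{S}'(R_-)>0>\mathcal{S}'(R_+)$. The intermediate value theorem then yields a zero $R_s\in(R_-,R_+)$, strict monotonicity of $\mathcal{S}'$ gives uniqueness on all of $(0,\infty)$, and the sign pattern on $[R_-,R_s)$ and $(R_s,R_+]$ follows.

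For (ii), I will write any $R\in(R_-,R_+)$ as $R=\theta R_-+(1-\theta)R_+$ with $\theta\in(0,1)$. Strict concavity gives $\mathcal{S}(R)>\theta\mathcal{S}(R_-)+(1-\theta)\mathcal{S}(R_+)=0$.

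For (iii), suppose $R_*\notin[R_-,R_+]$, say $R_*<R_-$, with $\mathcal{S}(R_*)=0$. Then $R_-$ lies strictly between $R_*$ and $R_+$, so strict concavity forces $\mathcal{S}(R_-)>0$, a contradiction. The case $R_*>R_+$ is symmetric. Combined with (ii), this shows the only zeros are $R_\pm$. Alternatively, one can argue that a strictly concave function has at most two zeros.

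The only step needing care is verifying $\mathcal{S}(R_\pm)=0$ and that $m\ne0$. Both follow directly from the standing assumptions, so I expect no real obstacle.
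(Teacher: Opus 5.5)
Your proposal is correct and follows essentially the same route as the paper. It gets $\mathcal{S}(R_\pm)=0$ from the Rankine--Hugoniot relations, $\mathcal{S}''<0$ from $m\neq0$ and $p''\ge0$, the sign pattern of $\mathcal{S}'$ from the transonic condition \eqref{eq:transonic-sign}, and then uses strict concavity for (ii) and (iii). Reading $\mathcal{S}'(R_-)>0>\mathcal{S}'(R_+)$ directly off \eqref{eq:transonic-sign} is actually slightly more direct than the paper's continuity argument at the endpoints; just delete your garbled first attempt at verifying $\mathcal{S}(R_\pm)=0$ and keep the correct direct computation.
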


\begin{proof}
Recall that
\[
    \mathcal{S}(R) =q-mc-\frac{m^2}{R}-p(R),
    \quad 
    \mathcal{S}'(R) = \frac{m^2}{R^2}-p'(R) \,,
    \quad \text{and} \quad
    \mathcal{S}''(R)=-\frac{2m^2}{R^3}-p''(R) \,.
\]
By \Cref{eq:RH} and since $S(\pm\infty)=0$, we have $\mathcal{S}(R_\pm)=0$. Using $m\neq 0$ and $p''(R)\ge 0$ from \Cref{assump:main}, we obtain $\mathcal{S}''(R)<0$ for all $R>0$. Hence, $\mathcal{S}'$ is strictly decreasing on $(0,\infty)$.

Since \Cref{sec:degeneracy-sonic-density} already established the transonic sign change
\Cref{eq:transonic-sign}, there is a unique $R_s\in(R_-,R_+)$ such that $\mathcal{S}'(R_s)=0$, with
\[
\mathcal{S}'(R)>0 \quad \text{for }R\in(R_-,R_s),
\qquad
\mathcal{S}'(R)<0 \quad \text{for }R\in(R_s,R_+).
\]
Since $\mathcal{S}'$ is continuous and positive on $(R_-,R_s)$, we also have $\mathcal{S}'(R_-)\ge 0$. If $\mathcal{S}'(R_-)=0$, then strict decrease of $\mathcal{S}'$ implies
$\mathcal{S}'(R)<0$ for every $R>R_-$, contradicting positivity on $(R_-,R_s)$.
Hence, $\mathcal{S}'(R_-)>0$. An analogous argument gives $\mathcal{S}'(R_+)<0$. This proves \textup{(i)}.

Since $\mathcal{S}$ is strictly concave and $\mathcal{S}(R_\pm)=0$, it follows that
$\mathcal{S}(R)>0$ for all $R\in(R_-,R_+)$,
proving \textup{(ii)}. Finally, a strictly concave function has at most two zeros, so
$R_-$ and $R_+$ are the only zeros of $\mathcal{S}$ in $(0,\infty)$, proving \textup{(iii)}.
\end{proof}

The weak formulation in \Cref{def:weak-profile} naturally singles out the following
flux-like slope variable, 
$W:=\frac{\mathcal{S}'(R)}{R}\dot{R}$.
This is precisely the quantity appearing under the derivative in
\Cref{eq:R-ODE-general}. On any open interval where $\mathcal{S}'(R)\neq 0$,
the weak profile equation can be written as the first-order system
\begin{equation}\label{eq:RW-system}
\begin{cases}
\dot{R}=\dfrac{R}{\mathcal{S}'(R)}\,W,\\[1ex]
\dot{W}=\dfrac{\mathcal{S}(R)}{\alpha R}
-\dfrac{2m^2}{R^2\mathcal{S}'(R)^2}\,W^2.
\end{cases}
\end{equation}
Initially, this system is understood in the sense of
distributions; after the local regularity argument below, the system holds
classically on nonsonic intervals. Under \Cref{assump:main}, the vector field in
\Cref{eq:RW-system} is $C^1$ on compact subsets of the nonsonic region, where
$\mathcal{S}'$ is bounded away from zero. Hence the corresponding classical
initial-value problem enjoys local uniqueness on nonsonic intervals.

\begin{theorem}[Strict monotonicity and nonsonic regularity]\label{thm:strict-mono}
Let $R\in\mathcal{M}$ be a nonconstant admissible weak solution of the IGR shock profile equation. Then the following holds:
\begin{enumerate}[label=(\roman*)]
\item $R$ is strictly increasing on $\mathbb{R}$; in particular,
$R(\xi)\in(R_-,R_+)$ for all $\xi\in\mathbb{R}$.
\item Let $\Omega:= \{\xi\in\mathbb{R}:\ R(\xi)\neq R_s\}$.
Then $R$ restricts to a $C^2$ function on $\Omega$, and
$\dot{R}(\xi)>0$ for all $\xi\in\Omega$.
\end{enumerate}
\end{theorem}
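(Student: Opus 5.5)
The argument has three stages: first get classical regularity on the nonsonic set $\Omega$, then rule out flat pieces and interior equilibria, and finally conclude strict monotonicity and $\dot R>0$. Since $R\in W^{1,2}_{\mathrm{loc}}$, it is continuous, so $\Omega$ is open.

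\emph{Regularity on $\Omega$.} On a component $I$ of $\Omega$, $\mathcal{S}'(R)$ is continuous and bounded away from zero on compact subintervals. The weak formulation \eqref{eq:weak-profile} says that $W=\frac{\mathcal{S}'(R)}{R}\dot R\in L^2_{\mathrm{loc}}(I)$ has distributional derivative
\[
\alpha^{-1}\Bigl(\tfrac{\mathcal{S}(R)}{R}-2\alpha\tfrac{m^2}{R^4}\dot R^2\Bigr)\in L^1_{\mathrm{loc}}(I).
\]
Hence $W\in W^{1,1}_{\mathrm{loc}}$, so $W$ is continuous. Then $\dot R=\frac{R}{\mathcal{S}'(R)}W$ is continuous, so $R\in C^1(I)$. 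The right-hand side of the $W$-equation is then continuous, so $W\in C^1$, and therefore $\dot R\in C^1$ and $R\in C^2(I)$. Thus \eqref{eq:RW-system} holds classically on $I$, and we have local uniqueness there.

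\emph{No plateau at an interior value, no equilibria.} Monotonicity gives $\dot R\ge 0$ on $\Omega$. Suppose $\dot R(\xi_0)=0$ at some $\xi_0\in\Omega$, so that $R(\xi_0)=R_0\neq R_s$.
\begin{itemize}
\item If $R_0\in(R_-,R_+)$, then $\mathcal{S}(R_0)>0$ by Lemma~\ref{lem:S-two-roots}(ii). Since $W(\xi_0)=0$, we get $\dot W(\xi_0)=\mathcal{S}(R_0)/(\alpha R_0)>0$. So $W$ changes sign at $\xi_0$, which means $\dot R=RW/\mathcal{S}'(R)$ changes sign there ($\mathcal{S}'$ has fixed sign near $\xi_0$). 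This contradicts monotonicity.
\item If $R_0\in\{R_-,R_+\}$, then $(R_0,0)$ is an equilibrium of the classical system (Lemma~\ref{lem:equilibria}). Local uniqueness forces $R\equiv R_0$ on the whole component of $\Omega$ containing $\xi_0$. That component cannot end at a finite point, since a boundary point would require $R=R_s\neq R_0$ by continuity. So $R\equiv R_0$ on all of $\mathbb{R}$, contradicting that $R$ is nonconstant with distinct limits.
\end{itemize}
The same reasoning shows $R$ never attains $R_\pm$. If $R(\xi_0)=R_-$ with $\xi_0\in\Omega$, then monotonicity gives $R\equiv R_-$ on $(-\infty,\xi_0]$, so $\dot R(\xi_0)=0$, which is the excluded case. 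Hence $R(\xi)\in(R_-,R_+)$ for all $\xi$, and $\dot R>0$ on $\Omega$.

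\emph{Strict monotonicity.} It remains to exclude intervals of constancy. Any such interval lies in $\Omega$ unless $R\equiv R_s$ on it, and intervals inside $\Omega$ are ruled out by $\dot R>0$. So the remaining case is $R\equiv R_s$ on a nontrivial interval $J$, and this is the main obstacle. On $J$, $\dot R=0$ a.e., so $W=0$ there, and the weak equation restricted to test functions supported in $J$ gives $\mathcal{S}(R_s)/R_s=0$. But $\mathcal{S}(R_s)>0$ by Lemma~\ref{lem:S-two-roots}(ii), a contradiction.

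Consequently $R^{-1}(R_s)$ is a single point, or is empty, and $R$ is strictly increasing on $\mathbb{R}$.
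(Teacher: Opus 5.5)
Your proof is correct and uses the same ingredients as the paper's: the bootstrap $W\in W^{1,1}\Rightarrow R\in C^1\Rightarrow W\in C^1\Rightarrow R\in C^2$ on nonsonic intervals, local uniqueness at the equilibria $(R_\pm,0)$ of \eqref{eq:RW-system}, the identity $\dot W=\mathcal S(R)/(\alpha R)>0$ at a zero of $\dot R$, and testing the weak equation on a plateau. The differences are only organizational. You establish $\dot R>0$ on $\Omega$ first, via a sign change of $W$ and a connectedness argument on the components of $\Omega$, so you need the plateau test only at the sonic value. The paper instead first rules out attainment of $R_\pm$ via the supremum of the contact set, then tests plateaus at arbitrary values, and finally obtains $\dot R>0$ from a local-minimum argument.
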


\begin{proof}
\emph{Step 1: local regularity on nonsonic neighborhoods.}
A bootstrapping argument allows us to establish regularity on open intervals away from the sonic point. 
Throughout the proof we work with the continuous or absolutely continuous
representatives supplied by the one-dimensional Sobolev embedding theorem. In
particular, on bounded intervals, functions in $W^{1,2}$ have continuous
representatives, and functions in $W^{1,1}$ have absolutely continuous
representatives.

Set $a(\xi):=\frac{\mathcal S'(R(\xi))}{R(\xi)}$, 
and let $B\subset\mathbb{R}$ be a bounded open interval on which $|\mathcal S'(R(\xi))|\ge c_0>0$.
Since $R\in W^{1,2}(B)$ and $B$ is bounded, $R$ has a continuous representative on $B$.
Also, because $R$ is nondecreasing with
$\lim_{\xi\to-\infty}R(\xi)=R_-$ and
$\lim_{\xi\to+\infty}R(\xi)=R_+$,
we have $R(\xi)\in [R_-,R_+] \subset (0,\infty)$
for all $\xi\in\mathbb R$.
Hence, $R(B)$ is contained in a compact subset of $(0,\infty)$. Since
$|\mathcal S'(R)|\ge c_0$ on $B$, both
$a=\mathcal{S}'(R)/R$ and
$a^{-1}$
are bounded on $B$.

Because $a\in L^\infty(B)$ and $\dot R\in L^2(B)$, we have $W=a\,\dot R\in L^2(B)\subset L^1(B)$.
Restricting \Cref{eq:weak-profile} to test functions $\varphi\in C_c^\infty(B)$ gives
\[
\int_B \alpha W\,\dot\varphi\,d\xi+\int_B G\,\varphi\,d\xi=0,
\quad \text{where} \quad
G:=\frac{\mathcal S(R)}{R}-2\alpha\,\frac{m^2}{R^4}\dot R^{\,2}.
\]
Since $R$ stays in a compact subset of $(0,\infty)$ on $B$ and $\dot R\in L^2(B)$, we have
$G\in L^1(B)$. Therefore, the weak derivative of $W$ is given by $\dot W=G/\alpha$, so that $W\in W^{1,1}(B)$.
Thus $W$ has an absolutely continuous representative on $B$, and in particular
$W\in C^0(B)$.
Since $a^{-1}$ is continuous on $B$, it follows that $\dot R=a^{-1}W\in C^0(B)$,
and hence $R\in C^1(B)$.

Now that $R\in C^1(B)$, the function $G$
is continuous on $B$. Since $W$ is absolutely continuous and its weak derivative is
$\dot W=G/\alpha\in C^0(B)$, we have $W\in C^1(B)$.
Moreover, because $R\in C^1(B)$ and $|\mathcal S'(R)|\ge c_0$ on $B$,
the coefficient $a^{-1}=R/\mathcal S'(R)$ belongs to $C^1(B)$. Thus
$\dot R=a^{-1}W\in C^1(B)$,
so that $R\in C^2(B)$. Consequently, the identities in \Cref{eq:RW-system} hold pointwise on $B$.

\emph{Step 2: end states are not reached at finite $\xi$.}
Suppose $R(\xi_0)=R_-$ for some finite $\xi_0$. Since $R$ is nondecreasing and
$\lim_{\xi\to-\infty}R(\xi)=R_-$,
we have $R\equiv R_-$ on $(-\infty,\xi_0]$, hence $\dot R=0$ a.e. there. Defining $\xi_\ast:=\sup\{\xi:\,R(\xi)=R_-\}$, we have $\xi_\ast<\infty$ since $R$ is nonconstant and
$\lim_{\xi\to+\infty}R(\xi)=R_+>R_-$.
Also, $R(\xi_\ast)=R_-$ by continuity. Choosing a small nonsonic neighborhood $B_{\xi_\ast}\ni \xi_\ast$
and applying Step 1 again, we obtain $R\in C^2(B_{\xi_\ast})$ and $W\in C^1(B_{\xi_\ast})$, so
$(R,W)$ solves \Cref{eq:RW-system} classically on $B_{\xi_\ast}$.

Since $W\in C^0(B_{\xi_\ast})$ and $W=0$ a.e. on
$B_{\xi_\ast}\cap(-\infty,\xi_\ast]$, it follows that
$W=0$ on $B_{\xi_\ast}\cap(-\infty,\xi_\ast]$. In particular,
$W(\xi_\ast)=0$.
Therefore, $(R(\xi_\ast),W(\xi_\ast))=(R_-,0)$. Since $\mathcal S(R_-)=0$, the
right-hand side of \Cref{eq:RW-system} vanishes at $(R_-,0)$, so $(R_-,0)$
defines a constant solution of the first-order system.
On the
nonsonic neighborhood $B_{\xi_\ast}$, the right-hand side of \Cref{eq:RW-system} is $C^1$, so local
uniqueness implies $R\equiv R_-$ in a neighborhood of $\xi_\ast$, 
contradicting the definition of $\xi_\ast$ as the
rightmost contact point. Hence $R(\xi)>R_-$ for all $\xi\in\mathbb{R}$. 
The argument for $R_+$ is symmetric. Therefore
$R(\xi)\in (R_-,R_+)$ for all $\xi\in\mathbb{R}$.

\emph{Step 3: strict monotonicity.}
If $R(\xi_1)=R(\xi_2)$ for some $\xi_1<\xi_2$, monotonicity implies
$R\equiv R_0$ on $[\xi_1,\xi_2]$. Testing \Cref{eq:weak-profile} with
$\varphi\in C_c^\infty((\xi_1,\xi_2))$ gives $\mathcal{S}(R_0)=0$.
By \Cref{lem:S-two-roots}(iii), $R_0\in\{R_-,R_+\}$, contradicting Step 2.
Hence $R$ is strictly increasing, proving (i).

\emph{Step 4: nonsonic regularity.}
Let $\Omega:=\{\xi\in\mathbb{R}:\,R(\xi)\neq R_s\}$.
Fix $\xi_0\in\Omega$. Then on some neighborhood $B_{\xi_0}\ni\xi_0$ one has
$|\mathcal{S}'(R(\xi))|\ge c_0>0$.
By Step 1, it follows that $R\in C^2(B_{\xi_0})$. Since $\xi_0$ was arbitrary, $R$ is $C^2$ on
$\Omega$.

\emph{Step 5: positivity of $\dot R$ on $\Omega$.}
Fix $\xi_0\in\Omega$ and choose $B_{\xi_0}$ as in Step 4. Since $R$ is nondecreasing and
$R\in C^2(B_{\xi_0})$, we have $\dot R\ge 0$ on $B_{\xi_0}$.
Suppose $\dot R(\xi_\ast)=0$ for some $\xi_\ast\in B_{\xi_0}$. Then
$\xi_\ast$ is a local minimum of $\dot R$, and hence $\ddot R(\xi_\ast)=0$.
Thus $W(\xi_\ast)=0$ and $\dot W(\xi_\ast)=0$. But evaluating
\Cref{eq:RW-system} at $\xi_\ast$ gives
\[
\dot W(\xi_\ast)=\frac{\mathcal S(R(\xi_\ast))}{\alpha R(\xi_\ast)}>0,
\]
because $R(\xi_\ast)\in(R_-,R_+)$ and \Cref{lem:S-two-roots}(ii) applies. This
contradiction proves $\dot R>0$ on $B_{\xi_0}$. Since $\xi_0$ was arbitrary,
$\dot R>0$ on $\Omega$. This proves \textup{(ii)}.
\end{proof}

\begin{remark}[Higher regularity away from the sonic point]\label{rem:nonsonic-higher-regularity}
Under \Cref{assump:main}, the nonsonic regularity obtained above is $C^2$. If, on a given nonsonic
density interval, the reduced pressure $p$ has additional regularity, then one obtains 
a corresponding higher regularity for $R$ on the associated $\xi$-interval.
\end{remark}

\begin{remark}[Unique sonic crossing]\label{rem:unique-sonic-crossing}
By continuity and strict monotonicity, every nonconstant $R\in\mathcal{M}$ crosses the unique sonic
density $R_s$ at exactly one point $\xi_s\in\mathbb{R}$, characterized by $R(\xi_s)=R_s$.
\end{remark}

\begin{lemma}[Flattening of the density tails]\label{lem:tail-flattening}
Let $R\in\mathcal{M}$ be a nonconstant admissible weak solution of the IGR shock
profile equation. Then
\[
\lim_{\xi\to-\infty}\dot R(\xi)=0=
\lim_{\xi\to+\infty}\dot R(\xi).
\]
\end{lemma}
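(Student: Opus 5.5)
The plan is to work only on the two nonsonic tails. There the profile is classical and strictly monotone by \Cref{thm:strict-mono}, so density can be used as the independent variable. In that variable the squared flux $W^2$ solves a linear first-order ODE whose coefficients stay regular up to the end state, which gives a limit for $\dot R$; boundedness of $R$ then forces that limit to be zero. I describe the argument at $-\infty$; the case $+\infty$ is symmetric, with $\mathcal S'<0$ replacing $\mathcal S'>0$.

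\emph{Step 1: setup on the left tail.} Let $\xi_s$ be the unique sonic crossing from \Cref{rem:unique-sonic-crossing}. Fix $\delta>0$ with $R_-<R_s-\delta$, and let $\xi_1<\xi_s$ be the point where $R(\xi_1)=R_s-\delta$. By \Cref{lem:S-two-roots}(i), $\mathcal S'\ge c_0>0$ on $[R_-,R_s-\delta]$. By \Cref{thm:strict-mono}, on $(-\infty,\xi_1]$ the function $R$ is $C^2$, satisfies $\dot R>0$, and $(R,W)$ solves \Cref{eq:RW-system} classically. Hence $\xi\mapsto R(\xi)$ is a $C^2$ diffeomorphism of $(-\infty,\xi_1]$ onto $(R_-,R_s-\delta]$. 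Define $Y(R):=W(\xi(R))^2$.

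\emph{Step 2: the linear equation for $Y$.} Using $W/\dot R=\mathcal S'(R)/R$ and the chain rule, \Cref{eq:RW-system} gives
\[
\frac{dY}{dR}=\frac{2W\dot W}{\dot R}
=\frac{2\mathcal S(R)\mathcal S'(R)}{\alpha R^2}-\frac{4m^2}{R^3\mathcal S'(R)}\,Y .
\]
This is linear in $Y$. Both coefficients are continuous on the closed interval $[R_-,R_s-\delta]$, because $R_->0$ and $\mathcal S'\ge c_0$ there. Writing $Y$ with the variation-of-constants formula from the base point $R_s-\delta$ shows that $Y$ extends continuously to $R_-$. So the finite limit $L:=\lim_{R\downarrow R_-}Y(R)\ge 0$ exists. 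Since $R(\xi)\downarrow R_-$ as $\xi\to-\infty$, we get $W(\xi)^2\to L$.

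\emph{Step 3: forcing $L=0$.} Because $\dot R=RW/\mathcal S'(R)$ with $R$ and $\mathcal S'(R)$ tending to the positive constants $R_-$ and $\mathcal S'(R_-)$, the limit $\dot R(\xi)\to \sqrt{L}\,R_-/\mathcal S'(R_-)$ exists; here I use that $W>0$ on the left tail. Suppose $L>0$. Then $\dot R\ge \kappa>0$ on some half-line $(-\infty,\xi_2]$. Integrating gives $R(\xi)\le R(\xi_2)-\kappa(\xi_2-\xi)\to-\infty$, which contradicts $R(\xi)>R_-$. Hence $L=0$ and $\dot R(\xi)\to 0$ as $\xi\to-\infty$.

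The only delicate point is Step 2. One must check that the change of variables is legitimate on the whole tail, which holds by strict monotonicity and $C^2$ regularity from \Cref{thm:strict-mono}. One must also check that the coefficient $4m^2/(R^3\mathcal S'(R))$ remains bounded at the endpoint $R_-$, which holds because $\mathcal S'(R_-)>0$ by \Cref{lem:S-two-roots}(i). Everything else is elementary.
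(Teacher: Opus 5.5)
Your proof is correct, and it takes a different route from the paper's. The paper stays in the $\xi$ variable. It writes \Cref{eq:RW-system} as $\dot W=A(R)-B(R)W^2$, with $A(R(\xi))\to0$ and $B$ bounded below by a positive constant near $R_\pm$. It then argues by Riccati comparison: if $W(\xi_0)>\varepsilon$ for some very negative $\xi_0$, then $W(\xi_0-\tau)$ satisfies a differential inequality of the form $y'\ge c\,y^2$ and blows up in finite backward time. That is impossible on the infinite left tail, so $W\to0$, and $\dot R\to0$ follows because $R/\mathcal S'(R)$ stays bounded. You instead use strict monotonicity to make density the independent variable. In that variable the quadratic term is linear in $W^2$, and your computation $dY/dR=2\mathcal S\mathcal S'/(\alpha R^2)-4m^2Y/(R^3\mathcal S')$ is right. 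Its coefficients are continuous up to $R_-$ because $\mathcal S'(R_-)>0$, so the limit of $W^2$ exists; the known sign of $W$ on each tail gives $W=\pm\sqrt{Y}$, hence $\dot R$ has a limit, and boundedness of $R$ forces that limit to be zero. The paper's argument is a qualitative comparison that needs only $A\to0$ and $B\ge b>0$, and it does not use $R$ as a coordinate. Yours avoids the blow-up analysis and uses nothing about the sign of the linear coefficient. It also anticipates the linearizing mechanism of \Cref{prop:Z-equation}: as a by-product it shows directly that $W^2$, and hence $Z=\dot R^{\,2}=(R/\mathcal S'(R))^2W^2$, extends continuously to the outer endpoints. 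That is exactly the endpoint information \Cref{prop:Z-equation} later extracts from this lemma.
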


\begin{proof}
By \Cref{thm:strict-mono} and \Cref{rem:unique-sonic-crossing}, the profile is
$C^2$ on each tail $(-\infty,\xi_s)$ and $(\xi_s,\infty)$, with
$\dot R>0$ there. Hence the first-order system \Cref{eq:RW-system} holds
classically on both tails. We write it as
\[
\dot R=\frac{R}{\mathcal S'(R)}W,
\qquad
\dot W=A(R)-B(R)W^2,
\]
where $A(R):=\frac{\mathcal S(R)}{\alpha R}$ and
$B(R):=\frac{2m^2}{R^2\mathcal S'(R)^2}$.
By \Cref{lem:S-two-roots}, $\mathcal S(R_\pm)=0$ and
$\mathcal S'(R_\pm)\neq 0$. Therefore
\[
A(R(\xi))\to0
\quad\text{as }\xi\to\pm\infty,
\]
while $B(R(\xi))$ remains bounded below by a positive constant near each
endpoint.

We first prove the claim as $\xi\to-\infty$. On the left tail,
$R(\xi)\in(R_-,R_s)$, so $\mathcal S'(R)>0$. Since $\dot R>0$, it follows that
$W>0$ on $(-\infty,\xi_s)$. Fix $\varepsilon>0$. Choosing
$\xi_\varepsilon<\xi_s$ sufficiently negative, there exists $b_\varepsilon>0$
such that, for all $\xi\le \xi_\varepsilon$,
\[
0\le A(R(\xi))\le \frac{b_\varepsilon}{2}\varepsilon^2,
\qquad
B(R(\xi))\ge b_\varepsilon .
\]
Suppose, for contradiction, that $W(\xi_0)>\varepsilon$ for some
$\xi_0\le\xi_\varepsilon$. Define $Y(\tau):=W(\xi_0-\tau)$ where $\tau\ge0$.
Since $\xi_0-\tau\le \xi_\varepsilon<\xi_s$ for every $\tau\ge0$, the trajectory
remains on the left nonsonic tail, and the bounds on $A$ and $B$ hold for all
$\tau\ge0$.

Let $c:=b_\varepsilon/2$. As long as $Y(\tau)\ge\varepsilon$, we have
\[
Y'(\tau)
=
-\dot W(\xi_0-\tau)
=
-A(R(\xi_0-\tau))+B(R(\xi_0-\tau))Y(\tau)^2
\ge
c\,Y(\tau)^2.
\]
Since $Y(0)>\varepsilon$, this inequality prevents $Y$ from crossing below
$\varepsilon$: indeed, while $Y\ge\varepsilon$, one has $Y'>0$. Hence the
inequality holds for all $\tau\ge0$ for which the solution exists. Therefore
\[
\frac{d}{d\tau}\frac{1}{Y(\tau)}
=
-\frac{Y'(\tau)}{Y(\tau)^2}
\le
-c.
\]
Integrating gives
\[
\frac{1}{Y(\tau)}
\le
\frac{1}{Y(0)}-c\tau.
\]
For $\tau>1/(cY(0))$, the right-hand side is negative, which is impossible
because $Y(\tau)>0$. This contradicts the fact that $Y(\tau)=W(\xi_0-\tau)$ is
defined and finite for every finite $\tau\ge0$ on the left nonsonic tail.
Therefore $0<W(\xi)\le\varepsilon$ for all sufficiently negative
$\xi$. Since $\varepsilon>0$ was arbitrary,
$W(\xi)\to0$ as $\xi\to-\infty$.
The coefficient $R/\mathcal S'(R)$ remains bounded near $R_-$, and therefore
\[
\dot R(\xi)=\frac{R(\xi)}{\mathcal S'(R(\xi))}W(\xi)\to0
\qquad\text{as }\xi\to-\infty.
\]

The proof as $\xi\to+\infty$ is analogous. On the right tail,
$\mathcal S'(R)<0$, so $W<0$; applying the same argument to $-W$ forward in
$\xi$ shows that $W(\xi)\to0$ as $\xi\to+\infty$. Since
$R/\mathcal S'(R)$ remains bounded near $R_+$, it follows that
$\dot R(\xi)\to0$ as $\xi\to+\infty$.
\end{proof}

%-------------------------------------------------------------------------------------------
\subsection{First-order IGR shock equation and one-sided reconstruction}\label{sec:one-sided-reconstruct}

Having established strict monotonicity, the density itself can be used as the independent variable on each nonsonic branch.

\begin{proposition}[Reduction to a linear first-order equation]\label{prop:Z-equation}
Let $R\in\mathcal{M}$ be nonconstant, and let $\xi_s$ be the unique sonic crossing from
\Cref{rem:unique-sonic-crossing}. By \Cref{thm:strict-mono}, the restrictions of $R$
to $(-\infty,\xi_s)$ and $(\xi_s,\infty)$ are $C^2$ and strictly increasing, with images
$(R_-,R_s)$ and $(R_s,R_+)$, respectively. Define
\[
Z(R):=\dot{R}(\xi)^2,\qquad R=R(\xi).
\]
This is well-defined on each of the density intervals $(R_-,R_s)$ and $(R_s,R_+)$.
Then, on each interval,
\begin{equation}\label{eq:Z-general}
Z' + P(R)\,Z = Q(R),
\end{equation}
where
\begin{equation}\label{eq:PQ}
P(R):=2\left(
\frac{\mathcal{S}''(R)}{\mathcal{S}'(R)}
-\frac{1}{R}
+\frac{2m^2}{R^3\mathcal{S}'(R)}
\right)
=
-\frac{2}{R}-\frac{2p''(R)}{\mathcal{S}'(R)},
\qquad
Q(R):=\frac{2}{\alpha}\frac{\mathcal{S}(R)}{\mathcal{S}'(R)}.
\end{equation}
Moreover, $Z$ extends continuously to the outer endpoints and satisfies
\begin{equation}\label{eq:Z-end-bc}
Z(R_-)=0,\qquad Z(R_+)=0.
\end{equation}
Finally, $Z(R)>0$ for $R\in(R_-,R_+)\setminus\{R_s\}$.
\end{proposition}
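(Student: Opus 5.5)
The plan is to exploit the strict monotonicity and nonsonic $C^2$ regularity from \Cref{thm:strict-mono} to change the independent variable from $\xi$ to $R$ on each branch. On $(-\infty,\xi_s)$ the map $\xi\mapsto R(\xi)$ is a $C^2$ bijection onto $(R_-,R_s)$ with $\dot R>0$, so by the inverse function theorem its inverse $\xi(R)$ is $C^2$. Hence $Z(R)=\dot R(\xi(R))^2$ is well defined and $C^1$ on $(R_-,R_s)$; the same holds on $(R_s,R_+)$. Positivity $Z>0$ away from $R_s$ is then immediate from $\dot R>0$ on $\Omega$ in \Cref{thm:strict-mono}(ii).

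For the equation, the key identity is the chain rule
\[
Z'(R)=\frac{1}{\dot R}\frac{d}{d\xi}\bigl(\dot R^{2}\bigr)=2\ddot R .
\]
On nonsonic intervals the expanded equation \Cref{eq:R-ODE-expanded} holds classically. Dividing it by $\alpha\mathcal{S}'(R)/(2R)$ gives
\[
2\ddot R+2\Bigl(\tfrac{\mathcal{S}''}{\mathcal{S}'}-\tfrac1R+\tfrac{2m^2}{R^3\mathcal{S}'}\Bigr)\dot R^2=\tfrac{2}{\alpha}\tfrac{\mathcal{S}}{\mathcal{S}'}.
\]
This is exactly $Z'+PZ=Q$. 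It remains to simplify $P$ using $\mathcal{S}''=-2m^2/R^3-p''$. The two $m^2$ terms cancel, which leaves $P=-2/R-2p''/\mathcal{S}'$. This is routine algebra.

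For the boundary conditions, use \Cref{lem:tail-flattening}. As $R\to R_-^+$ we have $\xi(R)\to-\infty$, because $R(\xi)>R_-$ for all finite $\xi$ and the limit is attained only at $-\infty$. Consequently $Z(R)=\dot R(\xi(R))^2\to0$. The argument at $R_+$ is symmetric. Setting $Z(R_\pm)=0$ then gives the continuous extension \Cref{eq:Z-end-bc}.

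I expect no serious obstacle. The only delicate point is justifying that $\xi(R)\to\mp\infty$ at the endpoints, so that the tail lemma applies. This follows from Step 2 of \Cref{thm:strict-mono}: the end states are not reached at finite $\xi$, and $R$ is monotone. The second point of care is that $Z$ is only claimed continuous up to $R_\pm$, not differentiable there, and that nothing is asserted at the degenerate point $R_s$.
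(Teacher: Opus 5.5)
Your proposal is correct and follows essentially the same route as the paper. It uses strict monotonicity and nonsonic $C^2$ regularity from \Cref{thm:strict-mono} to take $R$ as the independent variable, substitutes $\ddot R=\tfrac12 Z'(R)$ into \Cref{eq:R-ODE-expanded} and divides by the nonvanishing coefficient $\alpha\mathcal S'(R)/R$, and then reads off $Z(R_\pm)=0$ from \Cref{lem:tail-flattening} and $Z>0$ from $\dot R>0$ on $\Omega$. You also justify explicitly that $\xi(R)\to\mp\infty$ at the outer endpoints, a step the paper leaves implicit; it is immediate because the inverse is an increasing bijection onto $(-\infty,\xi_s)$, respectively $(\xi_s,\infty)$.
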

\begin{proof}
On each of the intervals $(-\infty,\xi_s)$ and $(\xi_s,\infty)$,
\Cref{thm:strict-mono} gives $R\in C^2$ and $\dot R>0$. Hence $R$ may be used
as the independent variable there. Differentiating the definition of $Z$ and the chain rule yields
\[
\ddot R
=
\frac{d\dot R}{dR}\dot R
=
\frac12 Z'(R).
\]
Substituting this identity into \Cref{eq:R-ODE-expanded} and dividing by
$\alpha\,\mathcal{S}'(R)/R$ gives \Cref{eq:Z-general,eq:PQ}.

The endpoint conditions follow from \Cref{lem:tail-flattening}. Indeed, as
$\xi\to\pm\infty$ we have $R(\xi)\to R_\pm$ by the definition of a weak IGR shock
profile, while \Cref{lem:tail-flattening} gives $\dot R(\xi)\to0$. Therefore the
corresponding one-sided squared-slope functions extend continuously to the
outer endpoints with $Z(R_-)=0$ and $Z(R_+)=0$.
Finally, \Cref{thm:strict-mono} gives $\dot R>0$ on
$(-\infty,\xi_s)\cup(\xi_s,\infty)$, and hence,
$Z(R)>0$ for $R\in(R_-,R_+)\setminus\{R_s\}$.
\end{proof}

\begin{lemma}[Canonical one-sided solution formulas]\label{lem:Z-solution-formula}
Let $P$ and $Q$ be the coefficients in \Cref{eq:PQ}. Fix reference points
$R_-^\ast\in(R_-,R_s)$ and $R_+^\ast\in(R_s,R_+)$, and define
\[
\mu_-(R):=\exp\!\left(\int_{R_-^\ast}^{R}P(s)\,ds\right),
\qquad R\in(R_-,R_s),
\]
and
\[
\mu_+(R):=\exp\!\left(\int_{R_+^\ast}^{R}P(s)\,ds\right),
\qquad R\in(R_s,R_+).
\]
Then there is a unique solution
\[
Z_-\in C^1((R_-,R_s))\cap C^0([R_-,R_s))
\]
of \eqref{eq:Z-general} on $(R_-,R_s)$ with $Z_-(R_-)=0$, where the endpoint value is understood
by continuous extension. It is given by
\begin{equation}\label{eq:Zminus-formula}
Z_-(R)
=
\mu_-(R)^{-1}
\int_{R_-}^{R}\mu_-(s)Q(s)\,ds,
\qquad R\in(R_-,R_s).
\end{equation}
Similarly, there is a unique solution
\[
Z_+\in C^1((R_s,R_+))\cap C^0((R_s,R_+])
\]
of \eqref{eq:Z-general} on $(R_s,R_+)$ with $Z_+(R_+)=0$, given by
\begin{equation}\label{eq:Zplus-formula}
Z_+(R)
=
\mu_+(R)^{-1}
\int_{R_+}^{R}\mu_+(s)Q(s)\,ds,
\qquad R\in(R_s,R_+).
\end{equation}
Both formulas are independent of the choice of reference points
$R_-^\ast$ and $R_+^\ast$. Moreover,
\[
Z_-(R)>0\quad\text{for }R\in(R_-,R_s),
\qquad
Z_+(R)>0\quad\text{for }R\in(R_s,R_+).
\]
\end{lemma}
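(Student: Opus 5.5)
This is a linear first-order ODE, so the plan is integrating factor, then a careful look at the endpoint behaviour. Work on $(R_-,R_s)$; the right branch is symmetric. There $\mathcal S'>0$ and $\mathcal S'$ is continuous, so $P$ and $Q$ are continuous on $(R_-,R_s)$. Both coefficients extend continuously to $R_-$, because $\mathcal S'(R_-)>0$ by \Cref{lem:S-two-roots}(i) and $p''$ is continuous.

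Hence $\mu_-$ is $C^1$ and positive on $(R_-,R_s)$, and it extends continuously to a positive value at $R_-$. Indeed, $\int_{R_-^\ast}^{R}P$ converges as $R\downarrow R_-$ since $P$ is bounded near $R_-$. This is the key point: the outer endpoint is a regular point of the linear equation, and the singularity sits only at $R_s$.

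\textbf{Solving and existence.} Multiplying \eqref{eq:Z-general} by $\mu_-$ gives $(\mu_- Z)'=\mu_- Q$. Integrating from a point $r\in(R_-,R)$ yields
\[
\mu_-(R)Z(R)=\mu_-(r)Z(r)+\int_r^R\mu_- Q\,ds .
\]
Now impose the continuity requirement $Z(r)\to 0$ as $r\downarrow R_-$. Since $\mu_-(r)$ stays bounded, the first term tends to $0$. The integral converges to $\int_{R_-}^R\mu_-Q\,ds$ because the integrand is bounded near $R_-$. This produces \eqref{eq:Zminus-formula} and shows that any solution with the boundary condition must equal it, which gives uniqueness.

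Conversely, the formula is $C^1$ on the open interval and satisfies the ODE by the fundamental theorem of calculus. It tends to $0$ at $R_-$ because $\mu_-(R)^{-1}$ stays bounded while the integral vanishes. The same argument also settles uniqueness directly: the difference $D$ of two solutions satisfies $D=C\mu_-^{-1}$, and $D\to 0$ at $R_-$ forces $C=0$ since $\mu_-^{-1}(R_-)>0$.

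\textbf{Independence of reference point.} Changing $R_-^\ast$ multiplies $\mu_-$ by a positive constant $\exp(\int_{R_-^{\ast\ast}}^{R_-^\ast}P)$, which cancels between $\mu_-^{-1}$ and the integral.

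\textbf{Positivity.} On $(R_-,R_s)$ we have $\mathcal S>0$ by \Cref{lem:S-two-roots}(ii) and $\mathcal S'>0$, so $Q>0$. The integrand $\mu_-Q$ is therefore positive, and the integral over $(R_-,R)$ is positive.

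On $(R_s,R_+)$ we have $\mathcal S>0$ and $\mathcal S'<0$, so $Q<0$. The integral $\int_{R_+}^R\mu_+Q\,ds$ runs backwards over $(R,R_+)$, which makes it equal to $\int_R^{R_+}\mu_+|Q|\,ds>0$. The uniqueness argument there uses $\mathcal S'(R_+)<0$ in the same way to get $\mu_+(R_+)>0$ finite.

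The only delicate step is the careful justification that $\mu_\pm$ has a finite positive limit at the outer endpoint. This rests entirely on the nondegeneracy $\mathcal S'(R_\pm)\neq0$ from \Cref{lem:S-two-roots}. No claim is made at $R_s$, where $P$ and $Q$ blow up, so that degeneracy does not enter this lemma.
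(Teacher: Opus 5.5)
Your proposal is correct and follows essentially the same route as the paper. Both arguments use the integrating factor $\mu_\pm$, the continuous positive extension of $P$, $Q$, $\mu_\pm$ to the outer endpoints via $\mathcal S'(R_\pm)\neq 0$, uniqueness from $(\mu_\pm Y)'=0$ together with the vanishing endpoint limit, the sign of $Q$ on each branch for positivity, and cancellation of the positive constant for reference-point independence; integrating from an interior point $r$ and letting $r\downarrow R_-$ is only a slightly more explicit packaging of the same existence and uniqueness step.
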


\begin{proof}
Since $\mathcal S'(R_-)\neq 0$ and $\mathcal S'(R_+)\neq 0$, the coefficients
$P$ and $Q$ extend continuously to the corresponding outer endpoints. Thus
$\mu_-$ and $\mu_+$ also admit positive continuous extensions to those endpoints,
and the integrals in \Cref{eq:Zminus-formula,eq:Zplus-formula} are well defined.

Multiplying the equation \eqref{eq:Z-general} by the integrating factor $\mu_\pm$ gives
$(\mu_\pm Z)'=\mu_\pm Q$.
Integrating from $R_-$ to $R$ on the left branch, and from $R_+$ to $R$ on the
right branch, yields \Cref{eq:Zminus-formula,eq:Zplus-formula}. Differentiating
these formulas shows that they solve the equation on their respective open
intervals, and the continuity of $P$, $Q$, and $\mu_\pm$ at the outer endpoints
shows that the prescribed endpoint values are attained by continuous extension.

For uniqueness, let $\widetilde Z$ be another solution on the left branch with
$\widetilde Z(R_-)=0$ by continuous extension. Then the difference
$Y:=Z_- - \widetilde Z$ satisfies
$(\mu_-Y)'=0$.
on $(R_-,R_s)$. Hence $\mu_-Y$ is constant there. Taking the limit
$R\downarrow R_-$ gives this constant equal to zero, so $Y\equiv0$. The right
branch is identical.

It remains to prove positivity. By \Cref{lem:S-two-roots},
$\mathcal S(R)>0$ on $(R_-,R_+)$,
while
\[
\mathcal S'(R)>0 \quad\text{on }(R_-,R_s),
\qquad
\mathcal S'(R)<0 \quad\text{on }(R_s,R_+).
\]
Therefore $Q(R)$
is positive on $(R_-,R_s)$ and negative on $(R_s,R_+)$. Since
$\mu_\pm>0$, the formula \Cref{eq:Zminus-formula} immediately gives
$Z_-(R)>0$ for $R\in(R_-,R_s)$. On the right branch, for
$R\in(R_s,R_+)$,
\[
\int_{R_+}^{R}\mu_+(s)Q(s)\,ds
=
-\int_{R}^{R_+}\mu_+(s)Q(s)\,ds>0,
\]
because $\mu_+Q<0$ on $(R_s,R_+)$. Hence \Cref{eq:Zplus-formula} gives
$Z_+(R)>0$ for $R\in(R_s,R_+)$.

Finally, changing the reference point in the definition of $\mu_\pm$ only
multiplies $\mu_\pm$ by a positive constant, which cancels from the formulas
for $Z_\pm$.
\end{proof}

\begin{corollary}[Identification of admissible branches with $Z_\pm$]\label{cor:Z-identification}
Let $R\in\mathcal{M}$ be nonconstant, and let $Z(R)=\dot R(\xi)^2$ be the function from
\Cref{prop:Z-equation}. Then
\[
Z(R)=Z_-(R)\quad\text{on }(R_-,R_s),
\qquad
Z(R)=Z_+(R)\quad\text{on }(R_s,R_+),
\]
where $Z_\pm$ are the canonical one-sided solutions from
\Cref{lem:Z-solution-formula}. 
In particular, the squared slope of every admissible nonconstant profile is
uniquely determined on each nonsonic branch. The corresponding profile branches
are then recovered by quadrature, up to translation.
\end{corollary}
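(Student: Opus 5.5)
The corollary follows by combining \Cref{prop:Z-equation} with the uniqueness part of \Cref{lem:Z-solution-formula}; the plan is to check that the function $Z$ built from an admissible profile lies in exactly the class where that uniqueness holds, separately on each branch. Fix a nonconstant $R\in\mathcal M$ with sonic crossing $\xi_s$ (\Cref{rem:unique-sonic-crossing}).

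On the left branch $(R_-,R_s)$, \Cref{thm:strict-mono} shows that $R|_{(-\infty,\xi_s)}$ is a $C^2$ strictly increasing bijection onto $(R_-,R_s)$ with $\dot R>0$. Its inverse $\xi(R)$ is therefore $C^2$ by the inverse function theorem. Hence $Z(R)=\dot R(\xi(R))^2$ is a composition of a $C^1$ function with a $C^2$ function, so $Z\in C^1((R_-,R_s))$. By \Cref{prop:Z-equation}, $Z$ satisfies \eqref{eq:Z-general} there. The same proposition, via \Cref{lem:tail-flattening}, gives that $Z$ extends continuously to $R_-$ with $Z(R_-)=0$. So $Z\in C^1((R_-,R_s))\cap C^0([R_-,R_s))$ solves the same endpoint problem that characterizes $Z_-$. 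The uniqueness statement of \Cref{lem:Z-solution-formula} then yields $Z=Z_-$. Concretely, $\mu_-(Z-Z_-)$ is constant, and since $\mu_-$ stays bounded and positive up to $R_-$, the limit $R\downarrow R_-$ forces that constant to be zero. The right branch is handled identically, using $Z(R_+)=0$ and $Z_+$.

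For the reconstruction statement, on the left branch I would write $\dot R=+\sqrt{Z_-(R)}$, taking the positive root because $\dot R>0$ by \Cref{thm:strict-mono}. Separating variables gives
\[
\xi-\xi_0=\int_{R(\xi_0)}^{R(\xi)}\frac{dr}{\sqrt{Z_-(r)}},
\]
which determines $R$ on $(-\infty,\xi_s)$ once a base point is fixed. Choosing the base point $\xi_s$ itself and fixing $R(\xi_s)=R_s$ pins down both branches simultaneously. A translation of $\xi$ moves $\xi_s$, which explains the ``up to translation'' clause. Equivalently, $R$ solves the autonomous ODE $\dot R=\sqrt{Z_\pm(R)}$, which is locally Lipschitz on nonsonic intervals because $Z_\pm>0$ and $Z_\pm\in C^1$. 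Its solutions through a given density are therefore unique up to a shift in $\xi$.

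The main obstacle is the regularity bookkeeping: one must confirm that $Z$ really is $C^1$ in the variable $R$ and attains the endpoint value continuously, so that the uniqueness clause applies. Both facts are already supplied by \Cref{thm:strict-mono} and \Cref{lem:tail-flattening}. A second point is that the left and right translations must be compatible, since they are both tied to the single point $\xi_s$. Continuity of $R$ across $\xi_s$ enforces this compatibility, so only one global translation parameter remains.
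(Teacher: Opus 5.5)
Your proposal is correct and follows the paper's proof, which likewise combines \Cref{prop:Z-equation} (equation \eqref{eq:Z-general} together with $Z(R_\pm)=0$) with the uniqueness clause of \Cref{lem:Z-solution-formula}. Your added checks are sound and make explicit what the paper leaves implicit: that $Z$ is $C^1$ as a function of $R$ via the inverse function theorem, and that the two branch translations are tied together at $\xi_s$, a point the paper defers to \Cref{thm:unique-translation}.
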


\begin{proof}
By \Cref{prop:Z-equation}, the function $Z$ solves \Cref{eq:Z-general} on
$(R_-,R_s)$ and $(R_s,R_+)$ with
$Z(R_-)=0$ and $Z(R_+)=0$.
\Cref{lem:Z-solution-formula} gives uniqueness of the corresponding one-sided solutions.
Therefore $Z=Z_-$ on $(R_-,R_s)$ and $Z=Z_+$ on $(R_s,R_+)$.
\end{proof}

\begin{lemma}[Outer-endpoint expansion of the one-sided solutions]\label{lem:Z-endpoints}
The one-sided solutions $Z_-$ and $Z_+$ from \Cref{lem:Z-solution-formula} satisfy
\[
Z_-(R)=\frac{1}{\alpha}(R-R_-)^2+O\!\left(|R-R_-|^3\right)
\qquad (R\downarrow R_-),
\]
and
\[
Z_+(R)=\frac{1}{\alpha}(R-R_+)^2+O\!\left(|R-R_+|^3\right)
\qquad (R\uparrow R_+).
\]
Moreover, for any fixed
$R_0^-\in(R_-,R_s)$ and $R_0^+\in(R_s,R_+)$,
\[
\int_{R}^{R_0^-}\frac{d\rho}{\sqrt{Z_-(\rho)}}
=
\sqrt{\alpha}\,\log\!\frac{1}{R-R_-}
+ O(1)
\qquad (R\downarrow R_-),
\]
and
\[
\int_{R_0^+}^{R}\frac{d\rho}{\sqrt{Z_+(\rho)}}
=
\sqrt{\alpha}\,\log\!\frac{1}{R_+-R}
+ O(1)
\qquad (R\uparrow R_+).
\]
\end{lemma}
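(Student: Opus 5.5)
The plan is to Taylor-expand the explicit formula \eqref{eq:Zminus-formula} at the outer endpoint $R_-$; the right branch is symmetric, so we treat only the left one. Write $\delta:=R-R_-$. By \Cref{lem:S-two-roots}, $\mathcal S(R_-)=0$ and $\mathcal S'(R_-)>0$. Since $p\in C^2$, $\mathcal S$ is $C^2$ near $R_-$ and $\mathcal S'$ is $C^1$, so
\[
\mathcal S(s)=\mathcal S'(R_-)(s-R_-)+O\big((s-R_-)^2\big),\qquad \mathcal S'(s)=\mathcal S'(R_-)+O(s-R_-).
\]
It follows that $Q(s)=\frac{2}{\alpha}\,\frac{\mathcal S(s)}{\mathcal S'(s)}=\frac{2}{\alpha}(s-R_-)+O\big((s-R_-)^2\big)$.

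Next, the coefficient $P=-2/R-2p''/\mathcal S'$ is continuous and bounded on $[R_-,R_-+\eta]$ for small $\eta$, because $\mathcal S'$ is bounded away from zero there. Hence $\mu_-(s)/\mu_-(R)=\exp\big(-\int_s^R P\big)=1+O(R-s)$ uniformly for $R_-\le s\le R\le R_-+\eta$. Substituting into
\[
Z_-(R)=\int_{R_-}^R\frac{\mu_-(s)}{\mu_-(R)}\,Q(s)\,ds
\]
gives
\[
Z_-(R)=\int_{R_-}^R\Big(\frac{2}{\alpha}(s-R_-)+O(\delta^2)\Big)\,ds=\frac{1}{\alpha}\delta^2+O(\delta^3).
\]
Note that only continuity of $p''$ near the outer endpoint is needed here, not the $C^{2,1}$ hypothesis at $R_s$. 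At $R_+$ we have $\mathcal S'(R_+)<0$ and the integral runs from $R_+$; the two sign flips cancel, so again $Z_+=\frac{1}{\alpha}(R_+-R)^2+O(|R-R_+|^3)$.

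For the quadrature asymptotics, the expansion gives $\sqrt{Z_-(\rho)}=\alpha^{-1/2}(\rho-R_-)\big(1+O(\rho-R_-)\big)$, with $Z_->0$ by \Cref{lem:Z-solution-formula}. Therefore
\[
\frac{1}{\sqrt{Z_-(\rho)}}=\frac{\sqrt\alpha}{\rho-R_-}+O(1)
\]
on $(R_-,R_-+\eta]$. Split $\int_R^{R_0^-}$ at $R_-+\eta$. On $[R_-+\eta,R_0^-]$ the integrand is continuous and positive, since $R_0^-<R_s$ and $Z_->0$ on a compact subset of $(R_-,R_s)$, so this piece is $O(1)$. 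On $[R,R_-+\eta]$ we get $\sqrt\alpha\log\frac{\eta}{R-R_-}+O(1)=\sqrt\alpha\log\frac{1}{R-R_-}+O(1)$. The right endpoint is handled identically.

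The main point of care is the uniformity of the remainders. In particular, the error in $1/\sqrt{Z}$ requires $Z_-\ge c\,\delta^2$, so the relative error $O(\delta)$ can be inverted. This holds once $\delta$ is small enough that the $O(\delta^3)$ term is at most half the leading term. Beyond that, the argument is routine.
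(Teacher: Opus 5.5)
Your proposal is correct and follows essentially the same route as the paper. You Taylor-expand $Q$ at the outer endpoint using $\mathcal S(R_\pm)=0$ and $\mathcal S'(R_\pm)\neq 0$, control the integrating factor through boundedness of $P$ there, integrate to obtain $Z_\pm=\alpha^{-1}(R-R_\pm)^2+O(|R-R_\pm|^3)$, and then split the quadrature integral. The only cosmetic difference is that you use the ratio $\mu_-(s)/\mu_-(R)=1+O(R-s)$ instead of expanding $\mu_-$ about $\mu_-(R_-)$ and dividing at the end, which gives the same estimate.
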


\begin{proof}
We prove the statement for $Z_-$; the proof for $Z_+$ is analogous, with the
orientation of the endpoint integral reversed. Recall from
\Cref{lem:Z-solution-formula} that
\[
Z_-(R)=\mu_-(R)^{-1}\int_{R_-}^{R}\mu_-(s)Q(s)\,ds,
\]
where
\[
\mu_-(R):=\exp\!\left(\int_{R_-^\ast}^{R}P(s)\,ds\right)
\]
for an arbitrary $R_-^\ast\in(R_-,R_s)$.

Since $\mathcal{S}'(R_-)\neq 0$ (see \Cref{lem:S-two-roots}), the coefficient
$P$ is continuous near $R_-$. Hence $\mu_-$ extends as a $C^1$ function to
$R_-$, and
\[
\mu_-(R)=\mu_-(R_-)+O(R-R_-)
\qquad (R\downarrow R_-).
\]
Moreover,
\[
\mu_-(R_-)
=
\exp\!\left(\lim_{R\downarrow R_-}\int_{R_-^\ast}^{R}P(s)\,ds\right)>0,
\]
because the integral has a finite limit.

Since $\mathcal{S}(R_-)=0$, $\mathcal{S}'(R_-)\neq0$, and $\mathcal{S}$ is
$C^2$ near $R_-$, Taylor's theorem gives
$\mathcal{S}(R)=\mathcal{S}'(R_-)(R-R_-)+O\!\left((R-R_-)^2\right)$
and
$\mathcal{S}'(R)=\mathcal{S}'(R_-)+O(R-R_-)$.
Therefore
\[
\frac{\mathcal{S}(R)}{\mathcal{S}'(R)}
=
(R-R_-)+O\!\left((R-R_-)^2\right),
\]
and hence
\[
Q(R)=\frac{2}{\alpha}(R-R_-)+O\!\left((R-R_-)^2\right)
\qquad (R\downarrow R_-).
\]
Combining this with the expansion of $\mu_-$ yields
\[
\mu_-(R)Q(R)
=
\frac{2\mu_-(R_-)}{\alpha}(R-R_-)
+O\!\left((R-R_-)^2\right).
\]
Thus
\[
\int_{R_-}^{R}\mu_-(s)Q(s)\,ds
=
\frac{\mu_-(R_-)}{\alpha}(R-R_-)^2
+O\!\left((R-R_-)^3\right).
\]
Dividing by
\[
\mu_-(R)=\mu_-(R_-)+O(R-R_-)
\]
gives
\[
Z_-(R)=\frac{1}{\alpha}(R-R_-)^2
+O\!\left((R-R_-)^3\right).
\]

Since $Z_->0$ on $(R_-,R_s)$, this implies
\[
\sqrt{Z_-(R)}
=
\frac{R-R_-}{\sqrt{\alpha}}\bigl(1+O(R-R_-)\bigr),
\]
and therefore
\[
\frac{1}{\sqrt{Z_-(R)}}
=
\frac{\sqrt{\alpha}}{R-R_-}+O(1)
\qquad (R\downarrow R_-).
\]
Splitting the integral at a fixed point sufficiently close to $R_-$ and absorbing
the remaining bounded contribution into $O(1)$ gives
\[
\int_R^{R_0^-}\frac{d\rho}{\sqrt{Z_-(\rho)}}
=
\sqrt{\alpha}\,\log\!\frac{1}{R-R_-}+O(1).
\]
The proof at $R_+$ is the same, using the formula for $Z_+$ and the variable
$R_+-R$ near the endpoint.
\end{proof}

\begin{proposition}[One-sided reconstruction of the canonical branches]\label{prop:reconstruction}
Fix $\widehat R_-\in(R_-,R_s)$ and $\widehat R_+\in(R_s,R_+)$. Define
\[
\Xi_-(R):=\int_{\widehat R_-}^{R}\frac{d\widetilde R}{\sqrt{Z_-(\widetilde R)}},
\qquad R\in(R_-,R_s),
\]
and
\[
\Xi_+(R):=\int_{\widehat R_+}^{R}\frac{d\widetilde R}{\sqrt{Z_+(\widetilde R)}},
\qquad R\in(R_s,R_+).
\]
Then $\Xi_\pm\in C^1$, with
\[
\Xi_\pm'(R)=\frac{1}{\sqrt{Z_\pm(R)}}>0.
\]
Thus each $\Xi_\pm$ is strictly increasing and invertible onto an open interval. Their inverses define
strictly increasing one-sided profile branches
\[
R^{\ell}(\xi):=\Xi_-^{-1}(\xi),
\qquad
\xi\in \Xi_-((R_-,R_s)),
\]
and
\[
R^{r}(\xi):=\Xi_+^{-1}(\xi),
\qquad
\xi\in \Xi_+((R_s,R_+)),
\]
satisfying
\[
\dot R^{\ell}(\xi)=\sqrt{Z_-(R^{\ell}(\xi))},
\qquad
\dot R^{r}(\xi)=\sqrt{Z_+(R^{r}(\xi))}.
\]
Moreover,
\[
\lim_{R\to R_-^+}\Xi_-(R)=-\infty,
\qquad
\lim_{R\to R_+^-}\Xi_+(R)=+\infty.
\]
\end{proposition}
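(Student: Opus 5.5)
The plan is to work directly from the properties of $Z_\pm$ established in \Cref{lem:Z-solution-formula} and \Cref{lem:Z-endpoints}.

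\emph{Differentiability of $\Xi_\pm$.} By \Cref{lem:Z-solution-formula}, $Z_\pm$ is $C^1$ and strictly positive on its open interval, $(R_-,R_s)$ or $(R_s,R_+)$. Hence $1/\sqrt{Z_\pm}$ is continuous, in fact $C^1$, there. The fundamental theorem of calculus then gives $\Xi_\pm\in C^1$ with $\Xi_\pm'=1/\sqrt{Z_\pm}>0$.

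\emph{Inverse branches.} A continuous, strictly increasing function on an open interval maps it homeomorphically onto an open interval. So $\Xi_\pm$ is invertible onto its image. Since the derivative never vanishes, the inverse function theorem makes $R^{\ell}=\Xi_-^{-1}$ and $R^{r}=\Xi_+^{-1}$ of class $C^1$, and
\[
\dot R^{\ell}(\xi)=\frac{1}{\Xi_-'(R^{\ell}(\xi))}=\sqrt{Z_-(R^{\ell}(\xi))}>0,
\]
and likewise for $R^{r}$. Both branches are therefore strictly increasing. Because $Z_\pm\in C^1$, the right-hand side is $C^1$ in $\xi$, so the branches are in fact $C^2$; this is not required by the statement.

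\emph{Divergence at the outer endpoints.} This is the only step that needs more than the definitions. Write
\[
\Xi_-(R)=-\int_R^{\widehat R_-}\frac{d\rho}{\sqrt{Z_-(\rho)}}
\]
for $R<\widehat R_-$. By \Cref{lem:Z-endpoints}, with $R_0^-=\widehat R_-$,
\[
\int_R^{\widehat R_-}\frac{d\rho}{\sqrt{Z_-(\rho)}}=\sqrt{\alpha}\log\frac{1}{R-R_-}+O(1).
\]
This tends to $+\infty$ as $R\downarrow R_-$, so $\Xi_-(R)\to-\infty$.

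The same argument at $R_+$ uses the right-endpoint expansion of \Cref{lem:Z-endpoints} with $R_0^+=\widehat R_+$. It gives $\Xi_+(R)=\sqrt{\alpha}\log\frac{1}{R_+-R}+O(1)\to+\infty$ as $R\uparrow R_+$.

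Everything rests on the quadratic vanishing $Z_\pm\sim(R-R_\pm)^2/\alpha$ from \Cref{lem:Z-endpoints}. This vanishing is what makes the improper integrals diverge logarithmically, so the branches approach $R_\pm$ only asymptotically in $\xi$. The behaviour of $\Xi_\pm$ near $R_s$ is not part of this statement and can be deferred.
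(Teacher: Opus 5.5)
Your proposal is correct and follows the paper's proof essentially step for step. The $C^1$ regularity and positivity of $Z_\pm$ from \Cref{lem:Z-solution-formula} give $\Xi_\pm\in C^1$ with positive derivative; the inverse function theorem then yields the branches and $\dot R=\sqrt{Z_\pm(R)}$; and the logarithmic estimates of \Cref{lem:Z-endpoints}, with $R_0^\pm=\widehat R_\pm$, give the divergence of $\Xi_\pm$ at the outer endpoints.
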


\begin{proof}
We give the argument for a generic branch. Let $I=(R_-,R_s)$ with $Z=Z_-$, or
$I=(R_s,R_+)$ with $Z=Z_+$. By \Cref{lem:Z-solution-formula}, $Z\in C^1(I)$
and $Z>0$ on $I$. Hence the corresponding map $\Xi$ is $C^1$ and satisfies
\[
\Xi'(R)=\frac{1}{\sqrt{Z(R)}}>0.
\]
Therefore $\Xi$ is strictly increasing and maps $I$ bijectively onto the open
interval $\Xi(I)$. By the inverse function theorem, $\Xi^{-1}$ is $C^1$ on
$\Xi(I)$. If $R(\xi):=\Xi^{-1}(\xi)$, then differentiating
$\Xi(R(\xi))=\xi$ gives
$\dot R(\xi)=
\sqrt{Z(R(\xi))}$.
The endpoint limits follow from the logarithmic estimates in
\Cref{lem:Z-endpoints}. On the left branch,
\[
\Xi_-(R)
=
-\int_R^{\widehat R_-}\frac{d\rho}{\sqrt{Z_-(\rho)}}
\to -\infty
\qquad (R\downarrow R_-),
\]
while on the right branch,
\[
\Xi_+(R)
=
\int_{\widehat R_+}^{R}\frac{d\rho}{\sqrt{Z_+(\rho)}}
\to +\infty
\qquad (R\uparrow R_+).
\]
\end{proof}

Applied to the canonical one-sided solutions $Z_-$ and $Z_+$ from
\Cref{lem:Z-solution-formula}, \Cref{prop:reconstruction} produces the left and
right profile branches away from the sonic density. The remaining task is to
analyze the behavior of $Z_\pm$ as $R\to R_s^\pm$, reconstruct the branches up
to the sonic point, and glue them into a global weak profile; this is carried
out in \Cref{sec:global}.

%============================================================================================
\section{Global existence, uniqueness, and regularity}
\label{sec:global}

\Cref{sec:structure-shock} constructed the canonical one-sided solutions $Z_-$ and $Z_+$,
representing the squared slope of the density profile, together with the corresponding
one-sided reconstruction maps away from the sonic density. The remainder of the argument
is organized in three steps. First, we quantify the singular behavior of $Z_\pm$ as
$R\to R_s^\pm$. Second, we reconstruct the profile on each side up to the sonic point
and glue the two branches into a global weak density profile. 
Third, we show that the one-sided representation also yields uniqueness up to 
translation. Finally, we compile these results to extract the resulting regularity and decay properties 
and reconstruct the full traveling-wave fields.

The sonic asymptotics are governed by the exponent
\begin{equation}\label{eq:a-def}
a_s:=-\frac{2p''(R_s)}{\mathcal{S}''(R_s)}\ge 0 .
\end{equation}
Recall, $R_s\in(R_-,R_+)$ is the unique sonic density, characterized by
$\mathcal{S}'(R_s)=0$. 
Recalling the definition of the coefficient $P$ \eqref{eq:PQ},
this exponent arises from the leading-order singular part
of $P$ near $R_s$. 
By the standing convexity hypothesis $p''\ge0$, the nonzero
mass flux condition $m\neq0$, and \Cref{eq:Sdoubleprime}, one has
$\mathcal{S}''(R_s)
=
-\frac{2m^2}{R_s^3}-p''(R_s)<0$,
and hence $0\le a_s<2$. 

%--------------------------------------------------------------------------------------------
\subsection{Asymptotics near the sonic density}

\begin{lemma}[Sonic asymptotics of $Z$]\label{lem:Z-sonic-asympt}
Let $Z_\pm$ be the canonical one-sided solutions from
\Cref{lem:Z-solution-formula}, and let $I_-:=(R_-,R_s)$ and
$I_+:=(R_s,R_+)$.
Then $Z_\pm(R)\to+\infty$ as $R\to R_s^\pm$. More precisely:
\begin{enumerate}[label=\textup{(\roman*)}]
\item If $a_s>0$, equivalently $p''(R_s)>0$, then there exist
$c_\pm,C_\pm>0$ and $\eta>0$ such that, for
$R\in I_\pm$ with $0<|R-R_s|<\eta$,
\begin{equation}\label{eq:Z-two-sided-power}
c_\pm |R-R_s|^{-a_s}
\le
Z_\pm(R)
\le
C_\pm |R-R_s|^{-a_s}.
\end{equation}
\item If $a_s=0$, equivalently $p''(R_s)=0$, then there exist
$c_\pm,C_\pm>0$ and $\eta>0$ such that, for
$R\in I_\pm$ with $0<|R-R_s|<\eta$,
\begin{equation}\label{eq:Z-two-sided-log}
c_\pm |\log |R-R_s||
\le
Z_\pm(R)
\le
C_\pm |\log |R-R_s||.
\end{equation}
\end{enumerate}
\end{lemma}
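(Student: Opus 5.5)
We will work directly from the explicit formulas \eqref{eq:Zminus-formula}--\eqref{eq:Zplus-formula}, analysing the integrating factor $\mu_\pm$ and the numerator integral separately near $R_s$. Set $r:=R-R_s$.

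\textbf{Step 1: local expansions of $P$ and $Q$.} Because $p\in C^{2,1}$ near $R_s$ (\Cref{assump:main}), we have $\mathcal S\in C^{2,1}$ there. Taylor's theorem then gives
\[
\mathcal S'(R)=\mathcal S''(R_s)\,r+O(r^2),\qquad p''(R)=p''(R_s)+O(|r|).
\]
Substituting into \eqref{eq:PQ} yields
\[
P(R)=-\frac{2p''(R)}{\mathcal S'(R)}-\frac{2}{R}=\frac{a_s}{r}+O(1).
\]
This is the point where the Lipschitz bound on $p''$ is essential: it makes the remainder $O(|r|)/r=O(1)$ integrable. Similarly, since $\mathcal S(R_s)>0$ by \Cref{lem:S-two-roots}(ii),
\[
Q(R)=\frac{2}{\alpha}\frac{\mathcal S(R)}{\mathcal S'(R)}=\frac{\kappa}{r}\bigl(1+O(|r|)\bigr),\qquad \kappa:=\frac{2\mathcal S(R_s)}{\alpha\,\mathcal S''(R_s)}<0 .
\]

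\textbf{Step 2: the integrating factor.} Integrating the expansion of $P$ from the reference point gives
\[
\mu_\pm(R)=|r|^{a_s}\,g_\pm(R),
\]
where $g_\pm$ is continuous and positive up to $R_s$, because $\exp$ of a bounded integrable remainder has a finite positive limit. On each side it follows that
\[
\mu_\pm Q=\kappa\,g_\pm(R)\,|r|^{a_s}/r,
\]
which has a definite sign and magnitude comparable to $|r|^{a_s-1}$.

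\textbf{Step 3: the numerator.} On the left branch $r<0$, so $\mu_-Q>0$ near $R_s$. Write
\[
\int_{R_-}^R\mu_-Q=\int_{R_-}^{R_s-\eta}\mu_-Q+\int_{R_s-\eta}^{R}\mu_-Q .
\]
The first term is a fixed positive constant. In the second, the integrand is comparable to $|r|^{a_s-1}$. We then split by cases.
\begin{itemize}
\item If $a_s>0$, then $a_s-1>-1$, so the integrand is integrable and the numerator converges to a finite positive limit $N_->0$. Positivity holds because the whole integrand is positive on $(R_-,R_s)$. Dividing by $\mu_-\asymp|r|^{a_s}$ gives the two-sided bound \eqref{eq:Z-two-sided-power}.
\item If $a_s=0$, the integrand is comparable to $|r|^{-1}$, so the numerator is comparable to $\log(1/|r|)$, bounded above and below by constant multiples for small $|r|$. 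Since $\mu_-$ tends to a positive constant, this gives \eqref{eq:Z-two-sided-log}.
\end{itemize}
The right branch is handled identically. The numerator there is $-\int_R^{R_+}\mu_+Q$ with $\mu_+Q<0$ on $(R_s,R_+)$, so it is again positive and behaves in the same way. In both cases $Z_\pm\to+\infty$, which follows immediately from the bounds since $a_s>0$ or the logarithm diverges.

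\textbf{Main obstacle.} The delicate step is Step 2: showing that $\int(P-a_s/r)$ stays bounded up to $R_s$, so that $\mu_\pm$ is exactly $|r|^{a_s}$ up to factors bounded above and below. Without the $C^{2,1}$ hypothesis the remainder could be only $o(1/r)$, and the power law would be perturbed by slowly varying factors. The case $a_s=0$ needs the same control so that $\mu_\pm$ has a positive finite limit. We will also take care with the uniform choice of $\eta$ so that the signs and comparabilities in Steps 1--3 hold simultaneously on $0<|r|<\eta$.
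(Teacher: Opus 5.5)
Your proposal is correct and follows essentially the paper's route: you expand $P=a_s/r+O(1)$ and $Q=\kappa/r+O(1)$ using the $C^{2,1}$ hypothesis, factor the integrating factor as $|r|^{a_s}$ times a function bounded above and below, and read off the behavior of $\int\mu_\pm Q$, whose integrand has size $|r|^{a_s-1}$. The only difference is minor and equivalent. You obtain positivity of the limiting numerator directly from the explicit formula, since $\mu_\pm Q$ has a fixed sign on the whole branch. The paper instead renormalizes at $R_s$ and rules out a zero limit $L_\pm=0$ by a local contradiction argument using $\kappa=b_s<0$ and $Z_\pm>0$; your version is slightly more direct.
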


\begin{proof}
Define
\[
b_s:=\frac{2}{\alpha}\frac{\mathcal S(R_s)}{\mathcal S''(R_s)}<0.
\]
Indeed, $\mathcal S(R_s)>0$ by \Cref{lem:S-two-roots}, while
$\mathcal S''(R_s)<0$ by \Cref{eq:Sdoubleprime}.

We first record the coefficient expansions near $R_s$. Since
$\mathcal S'(R_s)=0$ and $p''$ is locally Lipschitz near $R_s$, Taylor expansion gives,
one-sidedly as $R\to R_s^\pm$,
\[
\mathcal S(R)=\mathcal S(R_s)+O(|R-R_s|^2),
\]
and
\[
\mathcal S'(R)=\mathcal S''(R_s)(R-R_s)+O(|R-R_s|^2).
\]
Hence
\[
Q(R)=\frac{2}{\alpha}\frac{\mathcal S(R)}{\mathcal S'(R)}
=
\frac{b_s}{R-R_s}+O(1)
\qquad (R\to R_s^\pm).
\]
Here and below, all $O(\cdot)$ estimates are understood one-sidedly on the branch under consideration.

\emph{Case $a_s>0$.}
In this case $p''(R_s)>0$. Since $p''$ is locally Lipschitz, we have 
$p''(R)=p''(R_s)+O(|R-R_s|)$.
Using this and the definition of $P$ \eqref{eq:PQ},
we obtain
\[
P(R)=\frac{a_s}{R-R_s}+O(1)
\qquad (R\to R_s^\pm),
\]
where $a_s=-2p''(R_s)/\mathcal S''(R_s)>0$.

The singular term $a_s/(R-R_s)$ in $P$ suggests factoring out the corresponding
power-law behavior from the integrating factor. This is simply the
variation-of-constants formula with the leading sonic singularity removed.
Define
\[
\mu_{0,\pm}(R)
:=
\exp\!\left(
\int_{R_s}^{R}
\left(P(s)-\frac{a_s}{s-R_s}\right)\,ds
\right),
\qquad R\in I_\pm,
\]
where the integral is interpreted one-sidedly. Since $P(R)-\frac{a_s}{R-R_s}=O(1)$,
each $\mu_{0,\pm}$ extends continuously to $R_s$ and is bounded above and below
by positive constants near $R_s$.

Define
\[
M_\pm(R):=|R-R_s|^{a_s}\mu_{0,\pm}(R),
\qquad R\in I_\pm.
\]
This is the integrating factor for the equation after the singular part
$a_s/(R-R_s)$ of $P$ has been separated off. Indeed,
\[
\frac{M_\pm'(R)}{M_\pm(R)}
=
\frac{a_s}{R-R_s}
+
\frac{\mu_{0,\pm}'(R)}{\mu_{0,\pm}(R)}
=
\frac{a_s}{R-R_s}
+
P(R)-\frac{a_s}{R-R_s}
=
P(R).
\]
Thus, if we set
\[
Y_\pm(R):=M_\pm(R)Z_\pm(R)
=
|R-R_s|^{a_s}\mu_{0,\pm}(R)Z_\pm(R),
\]
then multiplying $Z_\pm'+PZ_\pm=Q$ by $M_\pm$ gives
\[
Y_\pm'(R)=M_\pm(R)Q(R)
=
|R-R_s|^{a_s}\mu_{0,\pm}(R)Q(R).
\]
Since $\mu_{0,\pm}$ is bounded near $R_s$ and
$Q(R)=\frac{b_s}{R-R_s}+O(1)$,
we have
\[
Y_\pm'(R)=O(|R-R_s|^{a_s-1}).
\]
Because $a_s>0$, this bound is integrable on each one-sided neighborhood of
$R_s$. Hence $Y_\pm$ has a finite one-sided limit
$L_\pm:=\lim_{R\to R_s^\pm}Y_\pm(R)$.
Equivalently,
\[
Z_\pm(R)
=
|R-R_s|^{-a_s}\mu_{0,\pm}(R)^{-1}Y_\pm(R),
\]
so the behavior of $Z_\pm$ is determined by whether the renormalized amplitude
$Y_\pm$ has a nonzero limit at the sonic point.

We claim that $L_\pm>0$. Since $Z_\pm>0$ by
\Cref{lem:Z-solution-formula} and the prefactor defining $Y_\pm$ is positive,
one has $Y_\pm>0$ on the corresponding branch. Therefore its finite one-sided
limit satisfies $L_\pm\ge0$. Suppose $L_\pm=0$. Then
\[
Y_\pm(R)
=
\int_{R_s}^{R}|s-R_s|^{a_s}\mu_{0,\pm}(s)Q(s)\,ds.
\]
Using
\[
Q(s)=\frac{b_s}{s-R_s}+O(1),
\qquad
\mu_{0,\pm}(s)=\mu_{0,\pm}(R_s)+o(1),
\]
and the one-sided identity
\[
\int_{R_s}^{R}\frac{|s-R_s|^{a_s}}{s-R_s}\,ds
=
\frac{1}{a_s}|R-R_s|^{a_s},
\]
we obtain, on either branch,
\[
Y_\pm(R)
=
\frac{b_s\,\mu_{0,\pm}(R_s)}{a_s}|R-R_s|^{a_s}
+
o(|R-R_s|^{a_s})
\qquad (R\to R_s^\pm).
\]
Since $b_s<0$, this is negative for $R$ sufficiently close to $R_s$, contradicting
$Y_\pm>0$. Hence $L_\pm>0$.

Therefore, after possibly shrinking $\eta>0$, both $Y_\pm$ and $\mu_{0,\pm}$ are
bounded above and below by positive constants for $R\in I_\pm$ with
$0<|R-R_s|<\eta$. It follows from
\[
Z_\pm(R)
=
|R-R_s|^{-a_s}\mu_{0,\pm}(R)^{-1}Y_\pm(R)
\]
that there exist $c_\pm,C_\pm>0$ such that
\[
c_\pm |R-R_s|^{-a_s}
\le
Z_\pm(R)
\le
C_\pm |R-R_s|^{-a_s},
\qquad
0<|R-R_s|<\eta.
\]
This proves \Cref{eq:Z-two-sided-power}, and in particular
$Z_\pm(R)\to+\infty$ as $R\to R_s^\pm$.

\emph{Case $a_s=0$.}
Here $p''(R_s)=0$. Since $p''$ is locally Lipschitz, $p''(R)=O(|R-R_s|)$. Together with
\[
\mathcal S'(R)=\mathcal S''(R_s)(R-R_s)+O(|R-R_s|^2),
\]
this gives $\frac{p''(R)}{\mathcal S'(R)}=O(1)$,
and hence, $P(R)=O(1)$ as $R\to R_s^\pm$.

In contrast with the case $a_s>0$, there is no power-law singularity to factor
out of the integrating factor. We therefore use the ordinary one-sided
integrating factor
\[
\mu_\pm(R):=\exp\!\left(\int_{R_s}^{R}P(s)\,ds\right),
\qquad R\in I_\pm.
\]
Since $P$ is bounded near $R_s$, each $\mu_\pm$ extends continuously to $R_s$
and is bounded above and below by positive constants. Moreover,
\[
\mu_\pm(R)=\mu_\pm(R_s)+O(|R-R_s|).
\]

Define $Y_\pm(R):=\mu_\pm(R)Z_\pm(R)$. Since
\[
\frac{\mu_\pm'(R)}{\mu_\pm(R)}=P(R),
\]
multiplying $Z_\pm'+PZ_\pm=Q$ by $\mu_\pm$ gives
\[
Y_\pm'(R)=\mu_\pm(R)Q(R).
\]
Using
\[
Q(R)=\frac{b_s}{R-R_s}+O(1)
\quad \text{ and } \quad
\mu_\pm(R)=\mu_\pm(R_s)+O(|R-R_s|),
\]
we obtain
\[
Y_\pm'(R)
=
\frac{b_s\mu_\pm(R_s)}{R-R_s}+O(1)
\qquad (R\to R_s^\pm).
\]
Therefore,
\[
Y_\pm(R)
=
b_s\mu_\pm(R_s)\log|R-R_s|+O(1)
\qquad (R\to R_s^\pm).
\]
Recalling that $Y_\pm=\mu_\pm Z_\pm$, this becomes
\[
\mu_\pm(R)Z_\pm(R)
=
b_s\mu_\pm(R_s)\log|R-R_s|+O(1)
\qquad (R\to R_s^\pm).
\]
Since $b_s<0$ and $\log|R-R_s|\to-\infty$, the leading term is positive and
diverges like $|\log|R-R_s||$. Thus, after possibly shrinking $\eta>0$, there
exist constants $\widetilde c_\pm,\widetilde C_\pm>0$ such that
\[
\widetilde c_\pm |\log|R-R_s||
\le
\mu_\pm(R)Z_\pm(R)
\le
\widetilde C_\pm |\log|R-R_s||,
\qquad
0<|R-R_s|<\eta.
\]
Since $\mu_\pm$ is bounded above and below by positive constants near $R_s$,
there exist $c_\pm,C_\pm>0$ such that
\[
c_\pm |\log|R-R_s||
\le
Z_\pm(R)
\le
C_\pm |\log|R-R_s||,
\qquad
0<|R-R_s|<\eta.
\]
This proves \Cref{eq:Z-two-sided-log}, and in particular
$Z_\pm(R)\to+\infty$ as $R\to R_s^\pm$.
\end{proof}

\begin{remark}[Polytropic special case]\label{rem:poly-special-a}
For $p(R)=\kappa R^\gamma$ with $\kappa>0$, $\gamma\ge 1$, from \Cref{eq:a-def}, one computes
\[
a_s=\frac{2(\gamma-1)}{\gamma+1}.
\]
Hence $a_s>0$ exactly when $\gamma>1$, while $a_s=0$ for $\gamma=1$.

The same formula holds for the ideal-gas-type equation of state obtained from the reduced pressure:
\[
\tilde p(R,e)=(\gamma-1)Re,
\quad \text{where} \quad
e(R)=e_0+\frac{A}{R}+\frac{m^2}{2R^2}.
\]
Indeed,
\[
p(R)=\tilde p(R,e(R))
=(\gamma-1)\left(e_0R+A+\frac{m^2}{2R}\right),
\]
so $p''(R)=(\gamma-1)m^2/R^3$.
Evaluating at $R_s$ and using $\mathcal S''(R_s)=-\frac{2m^2}{R_s^3}-p''(R_s)$ \eqref{eq:Sdoubleprime}
yields
\[
\mathcal S''(R_s)=-(\gamma+1)\frac{m^2}{R_s^3},
\qquad
a_s=-\frac{2p''(R_s)}{\mathcal S''(R_s)}
=\frac{2(\gamma-1)}{\gamma+1}.
\]
Thus, for a calorically perfect gas, the sonic exponent again satisfies
\[
a_s=\frac{2(\gamma-1)}{\gamma+1}.
\]
\end{remark}

\begin{remark}[Reconstruction integrability at $R_s$]\label{rem:sonic-integrability}
\Cref{lem:Z-sonic-asympt} implies that
$1/\sqrt{Z_\pm(R)}\in L^1_{\mathrm{loc}}$
near $R_s$ on each one-sided branch. Indeed, if $a_s>0$, then $1/\sqrt{Z_\pm(R)}\le C_\pm |R-R_s|^{a_s/2}$,
which is locally integrable. If $a_s=0$, then
\[
\frac{1}{\sqrt{Z_\pm(R)}}\le
\frac{C_\pm}{\sqrt{|\log |R-R_s||}},
\]
which is also locally integrable near $R_s$.
\end{remark}

%--------------------------------------------------------------------------------------------
\subsection{Reconstruction and gluing of the density profile}

Applying \Cref{prop:reconstruction} to the canonical one-sided solutions $Z_-$ and
$Z_+$ produces strictly increasing left and right profile branches, each determined
up to translation in $\xi$. By \Cref{lem:Z-endpoints}, these branches extend to
$\xi=\pm\infty$ at the outer end states, while \Cref{rem:sonic-integrability}
shows that they reach the sonic density $R_s$ in finite $\xi$.

Now, one must check that the glued profile satisfies the profile equation
distributionally, rather than acquiring a point-mass defect at the crossing.
Although each branch solves
the profile equation classically away from $R_s$, the density slope may blow up
at the sonic crossing. The following lemma rules out the formation of such a defect.

\begin{lemma}[No defect at the sonic crossing]\label{lem:no-defect}
Let $R:\mathbb{R}\to [R_-,R_+]$ be continuous, with $R(\xi_s)=R_s$, and suppose
$R\in C^2((-\infty,\xi_s)\cup(\xi_s,\infty))$.
Assume that
\[
R((-\infty,\xi_s))\subset (R_-,R_s),
\qquad
R((\xi_s,\infty))\subset (R_s,R_+),
\]
and that
\[
\dot R=\sqrt{Z_-(R)}
\quad\text{on }(-\infty,\xi_s),
\qquad
\dot R=\sqrt{Z_+(R)}
\quad\text{on }(\xi_s,\infty),
\]
where $Z_\pm$ are the canonical one-sided solutions from
\Cref{lem:Z-solution-formula}. Then
\[
R\in W^{1,2}_{\mathrm{loc}}(\mathbb R),
\]
and $R$ satisfies \Cref{eq:R-ODE-general} on $\mathbb R$ in the sense of distributions, i.e., \Cref{eq:weak-profile}.
\end{lemma}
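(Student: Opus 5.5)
The plan is to reduce everything to one-sided computations on $(-\infty,\xi_s)$ and $(\xi_s,\infty)$, where $R$ is $C^2$, and then control the behaviour at the single point $\xi_s$ using the sonic asymptotics of \Cref{lem:Z-sonic-asympt}. The one structural fact doing the work is $0\le a_s<2$.

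\emph{Step 1: $W^{1,2}_{\mathrm{loc}}$ regularity.} Fix a bounded interval $[\xi_1,\xi_2]\ni\xi_s$. On each half, change variables $\rho=R(\xi)$, using $d\rho=\dot R\,d\xi$ and $\dot R=\sqrt{Z_\pm(\rho)}$. This gives
\[
\int_{\xi_1}^{\xi_s}\dot R^{\,2}\,d\xi=\int_{R(\xi_1)}^{R_s}\sqrt{Z_-(\rho)}\,d\rho ,
\]
and similarly on the right half. By \Cref{lem:Z-sonic-asympt} there are two cases near $R_s$:
\begin{itemize}
\item if $a_s>0$, then $\sqrt{Z_\pm}\le C|\rho-R_s|^{-a_s/2}$, which is integrable because $a_s<2$;
\item if $a_s=0$, then $\sqrt{Z_\pm}\le C|\log|\rho-R_s||^{1/2}$, which is also integrable.
\end{itemize}
Away from $R_s$ the function $Z_\pm$ is continuous on the compact range $[R(\xi_1),R(\xi_2)]\subset(R_-,R_+)$. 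Hence $\dot R\in L^2(\xi_1,\xi_2)$, and in particular $\dot R\in L^1$.

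Next I identify the classical derivative off $\xi_s$ with the weak derivative on all of $(\xi_1,\xi_2)$. Since $R$ is continuous at $\xi_s$, the fundamental theorem of calculus on $[\xi_1,\xi_s-\varepsilon]$ and $[\xi_s+\varepsilon,\xi_2]$, followed by $\varepsilon\to0$, gives $R(\xi)-R(\xi_1)=\int_{\xi_1}^{\xi}\dot R$. So $R$ is absolutely continuous with derivative $\dot R\in L^2_{\mathrm{loc}}$, and therefore $R\in W^{1,2}_{\mathrm{loc}}(\mathbb R)$.

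\emph{Step 2: classical equation off $\xi_s$.} On each half-line, differentiate $\dot R=\sqrt{Z_\pm(R)}$ to get $\ddot R=\tfrac12 Z_\pm'(R)$. This reverses the computation in \Cref{prop:Z-equation}: substituting into \eqref{eq:Z-general} and multiplying by $\alpha\mathcal S'(R)/R$ recovers \Cref{eq:R-ODE-expanded}, which is the expanded form of the divergence-form equation \Cref{eq:R-ODE-general}. Thus, with $W=\mathcal S'(R)\dot R/R$, we have $\alpha\dot W=\mathcal S(R)/R-2\alpha m^2\dot R^{\,2}/R^4$ pointwise on $\mathbb R\setminus\{\xi_s\}$.

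\emph{Step 3: testing and removing the sonic point.} For $\varphi\in C_c^\infty(\mathbb R)$, split the integrals in \Cref{eq:weak-profile} into the regions $\xi<\xi_s-\varepsilon$ and $\xi>\xi_s+\varepsilon$. Integrate the $W\dot\varphi$ term by parts on each region and use Step 2. The interior contributions cancel, leaving only the boundary terms
\[
\alpha W(\xi_s-\varepsilon)\varphi(\xi_s-\varepsilon)-\alpha W(\xi_s+\varepsilon)\varphi(\xi_s+\varepsilon).
\]

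To let $\varepsilon\to0$ I need two things.

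First, the full integrands must be in $L^1$ near $\xi_s$, so that dominated convergence applies to the truncated integrals. The term $\dot R^{\,2}$ is integrable by Step 1. For the $W\dot\varphi$ term, note that $W$ is in fact bounded near $\xi_s$, which follows from the next point.

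Second, $W(\xi)\to0$ as $\xi\to\xi_s^\pm$. This is the key point, and the step I expect to be the main obstacle, since it is exactly where a point-mass defect $\alpha[W]\delta_{\xi_s}$ could otherwise appear. Using $\mathcal S'(R)=\mathcal S''(R_s)(R-R_s)+O(|R-R_s|^2)$ together with \Cref{lem:Z-sonic-asympt}:
\begin{itemize}
\item if $a_s>0$, then $|W|\le C|R-R_s|\,|R-R_s|^{-a_s/2}=C|R-R_s|^{1-a_s/2}$;
\item if $a_s=0$, then $|W|\le C|R-R_s|\,|\log|R-R_s||^{1/2}$.
\end{itemize}
Both bounds tend to $0$ because $a_s<2$ and $R(\xi)\to R_s$ by continuity.

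With these two facts, both boundary terms vanish as $\varepsilon\to0$, the truncated integrals converge to the full ones, and \Cref{eq:weak-profile} holds.
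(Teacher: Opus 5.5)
Your proposal is correct and follows essentially the same route as the paper. It obtains $W^{1,2}_{\mathrm{loc}}$ from the change of variables $d\xi=d\rho/\sqrt{Z_\pm(\rho)}$ together with the sonic asymptotics and $a_s<2$, recovers the equation classically off $\xi_s$ by reversing the $Z$-reduction, and then uses an $\varepsilon$-truncated integration by parts whose boundary terms vanish because the flux $W=\mathcal S'(R)\dot R/R$ tends to zero as $\xi\to\xi_s^\pm$. Your explicit check, via continuity of $R$ and the fundamental theorem of calculus, that the classical derivative off $\xi_s$ is the weak derivative across $\xi_s$ fills in a step the paper leaves implicit.
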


\begin{proof}
\emph{Step 1: local Sobolev regularity near the sonic point.}
Away from $\xi_s$, the profile is $C^2$. 

It remains only to check the behavior near $\xi_s$. On each side of $\xi_s$,
\[
\dot R^2=Z_\pm(R),
\qquad
d\xi=\frac{dR}{\sqrt{Z_\pm(R)}}.
\]
Hence, for $\epsilon>0$ sufficiently small,
\begin{multline*}
\int_{\xi_s-\epsilon}^{\xi_s+\epsilon} |\dot R(\xi)|^2\,d\xi
=
\int_{\xi_s-\epsilon}^{\xi_s} |\dot R(\xi)|^2\,d\xi
+
\int_{\xi_s}^{\xi_s+\epsilon} |\dot R(\xi)|^2\,d\xi \\
=
\int_{R(\xi_s-\epsilon)}^{R_s}\sqrt{Z_-(R)}\,dR
+
\int_{R_s}^{R(\xi_s+\epsilon)}\sqrt{Z_+(R)}\,dR.
\end{multline*}
Both integrals are finite by \Cref{lem:Z-sonic-asympt}: in the power-law case
this uses $a_s<2$, and in the logarithmic case
$\sqrt{|\log|R-R_s||}$ is locally integrable. Since $R$ is bounded, this proves
$R\in W^{1,2}_{\mathrm{loc}}(\mathbb R)$.

\emph{Step 2: classical validity away from the sonic point.}
On each side of $\xi_s$, the identity $\dot R^2=Z_\pm(R)$ holds and $Z_\pm$
solves \Cref{eq:Z-general}. Reversing the reduction in
\Cref{prop:Z-equation}, the profile satisfies \Cref{eq:R-ODE-general}
pointwise on $(-\infty,\xi_s)\cup(\xi_s,\infty)$.
Indeed, differentiating $\dot R^2=Z_\pm(R)$ gives
$\ddot R=\frac12 Z_\pm'(R)$, and substituting this into
\Cref{eq:Z-general} recovers \Cref{eq:R-ODE-expanded}.

\emph{Step 3: vanishing of the possible defect.}
Write \Cref{eq:R-ODE-general} in divergence form as
$G-\alpha F'=0$,
where, away from $\xi_s$,
\[
F(\xi):=\frac{\mathcal S'(R(\xi))}{R(\xi)}\dot R(\xi),
\qquad
G(\xi):=
\frac{\mathcal S(R(\xi))}{R(\xi)}
-2\alpha\frac{m^2}{R(\xi)^4}\dot R(\xi)^2.
\]
Since the equation holds classically on each side, the only possible
distributional defect at $\xi_s$ is a point mass coming from a jump in $F$.

Near $R_s$, $\mathcal S'(R)=\mathcal S''(R_s)(R-R_s)+O(|R-R_s|^2)$
and $\dot R=\sqrt{Z_\pm(R)}$. Hence, \Cref{lem:Z-sonic-asympt} gives
\[
|F(\xi)|
\le C
\begin{cases}
|R(\xi)-R_s|^{1-a_s/2}, & a_s>0,\\[0.5ex]
|R(\xi)-R_s|\sqrt{|\log |R(\xi)-R_s||}, & a_s=0.
\end{cases}
\]
Since $0\le a_s<2$, both right-hand sides tend to zero as
$\xi\to\xi_s^\pm$. Therefore
\[
\lim_{\xi\to\xi_s^-}F(\xi)
=
\lim_{\xi\to\xi_s^+}F(\xi)
=
0.
\]
Thus the divergence term produces no point-mass defect at the sonic crossing.

We also have $F,G\in L^1_{\mathrm{loc}}(\mathbb R)$. The estimate above gives
local integrability of $F$, while $G$ is locally integrable because
$\mathcal S(R)/R$ is bounded near $\xi_s$ and $\dot R^2\in L^1_{\mathrm{loc}}$
by Step 1.

\emph{Step 4: passage to the weak formulation.}
Let $\varphi\in C_c^\infty(\mathbb R)$ and $\varepsilon>0$. Since
$G-\alpha F'=0$ holds classically on each side of $\xi_s$, integration by parts
on $(-\infty,\xi_s-\varepsilon)$ and $(\xi_s+\varepsilon,\infty)$ gives
\begin{align*}
&\int_{\mathbb R\setminus[\xi_s-\varepsilon,\xi_s+\varepsilon]}
\alpha F\dot{\varphi}\,d\xi
+
\int_{\mathbb R\setminus[\xi_s-\varepsilon,\xi_s+\varepsilon]}
G\varphi\,d\xi \\
&\qquad
+\alpha F(\xi_s+\varepsilon)\varphi(\xi_s+\varepsilon)
-\alpha F(\xi_s-\varepsilon)\varphi(\xi_s-\varepsilon)
=0.
\end{align*}
Letting $\varepsilon\to0$, the bulk integrals converge by local integrability,
and the boundary terms vanish because $F(\xi)\to0$ from both sides. Hence
\[
\int_{\mathbb R}\alpha F\dot{\varphi}\,d\xi
+
\int_{\mathbb R}G\varphi\,d\xi
=0.
\]
This is exactly \eqref{eq:weak-profile}.
\end{proof}

\begin{theorem}[Global weak density profile by gluing]\label{thm:global-glue}
Under the standing assumptions \Cref{assump:main}, there exists a continuous strictly
increasing profile $R:\mathbb{R}\to(R_-,R_+)$ and a crossing point $\xi_s\in\mathbb R$
such that:
\begin{enumerate}[label=\textup{(\roman*)}]
\item $R(\xi_s)=R_s$ and $R$ restricts to a $C^2$ function on
$(-\infty,\xi_s)\cup(\xi_s,\infty)$;
\item $\dot R^2=Z_-(R)\text{ on }(-\infty,\xi_s), \dot R^2=Z_+(R)\text{ on }(\xi_s,\infty)$;
\item
$R(\xi)\to R_\pm$ and 
$\dot R(\xi)\to0$ as $\xi\to\pm\infty$;
\item $R$ satisfies the weak formulation \Cref{eq:weak-profile}; 
hence, together with monotonicity and the endpoint limits, $R\in\mathcal M$.
\end{enumerate}
\end{theorem}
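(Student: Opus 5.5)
The plan is to build $R$ explicitly from the canonical one-sided solutions $Z_\pm$ of \Cref{lem:Z-solution-formula}, reconstruct each branch by quadrature as in \Cref{prop:reconstruction}, and then invoke \Cref{lem:no-defect} to verify the weak formulation. This time we extend the quadrature all the way to the sonic density.

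\emph{Construction and gluing.} Define
\[
\Xi_-(R):=-\int_R^{R_s}\frac{d\rho}{\sqrt{Z_-(\rho)}},\quad R\in(R_-,R_s],
\qquad
\Xi_+(R):=\int_{R_s}^{R}\frac{d\rho}{\sqrt{Z_+(\rho)}},\quad R\in[R_s,R_+).
\]
Both integrals are finite near $R_s$ by \Cref{rem:sonic-integrability}, so $\Xi_\pm(R_s)=0$. Changing the base point from $\widehat R_\pm$ to $R_s$ only shifts $\Xi_\pm$ by a finite constant. Hence \Cref{prop:reconstruction} still gives $\Xi_-(R)\to-\infty$ as $R\downarrow R_-$ and $\Xi_+(R)\to+\infty$ as $R\uparrow R_+$. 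Consequently $\Xi_-$ is a continuous strictly increasing bijection from $(R_-,R_s]$ onto $(-\infty,0]$, and $\Xi_+$ is one from $[R_s,R_+)$ onto $[0,\infty)$. Set $\xi_s:=0$ and define $R:=\Xi_-^{-1}$ on $(-\infty,0]$ and $R:=\Xi_+^{-1}$ on $[0,\infty)$. The two definitions agree at $0$, where both give $R_s$. The result is a continuous strictly increasing map $\mathbb R\to(R_-,R_+)$ with $R(\xi_s)=R_s$, and it maps $(-\infty,\xi_s)$ into $(R_-,R_s)$ and $(\xi_s,\infty)$ into $(R_s,R_+)$.

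\emph{Regularity and items (i)--(iii).} On each open half-line, \Cref{prop:reconstruction} gives $R\in C^1$ with $\dot R=\sqrt{Z_\pm(R)}$. This is item (ii). Since $Z_\pm\in C^1$ and $Z_\pm>0$ on the open branches, $\sqrt{Z_\pm}$ is $C^1$ there. Differentiating $\dot R=\sqrt{Z_\pm(R)}$ shows $\dot R$ is $C^1$, so $R\in C^2$ away from $\xi_s$, which is item (i).

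For item (iii), the limits $R(\xi)\to R_\pm$ follow from the surjectivity statements above. The limits $\dot R\to 0$ follow from $\dot R^2=Z_\pm(R)$, the continuity of $Z_\pm$ at the outer endpoints, and $Z_\pm(R_\pm)=0$ from \Cref{lem:Z-solution-formula}. One could also use the quadratic expansion of \Cref{lem:Z-endpoints}.

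\emph{Weak formulation and membership in $\mathcal M$.} All hypotheses of \Cref{lem:no-defect} are now in place. That lemma yields $R\in W^{1,2}_{\mathrm{loc}}(\mathbb R)$ and the distributional identity \Cref{eq:weak-profile}. Combined with the range $[R_-,R_+]$, the endpoint limits, and monotonicity, this gives item (iv): $R$ satisfies \Cref{def:weak-profile} and belongs to $\mathcal M$ by \Cref{def:M}.

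The only delicate point is the finiteness of the sonic-side quadrature, which is what allows the two branches to meet at a finite $\xi_s$. This is supplied by \Cref{lem:Z-sonic-asympt} together with \Cref{rem:sonic-integrability}. The corresponding defect issue at $\xi_s$ is already handled by \Cref{lem:no-defect}, so no genuine obstacle remains beyond checking these hypotheses carefully.
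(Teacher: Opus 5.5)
Your proposal is correct and follows the paper's proof step for step. You reconstruct each branch by quadrature, use the finiteness of the sonic-side integrals (\Cref{rem:sonic-integrability}) to glue the branches at a finite $\xi_s$, and invoke \Cref{lem:no-defect} to get the weak formulation and membership in $\mathcal M$.

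There are two small differences, both valid. You anchor the quadratures at $R_s$ instead of translating the branches afterward. You also get $\dot R\to 0$ directly from $\dot R^2=Z_\pm(R)$ and the continuity of $Z_\pm$ at $R_\pm$ with $Z_\pm(R_\pm)=0$; the paper instead applies \Cref{lem:tail-flattening} once $R\in\mathcal M$ is known, so your route is slightly more self-contained.
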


\begin{proof}
\emph{Step 1: reconstruct the one-sided branches.}
Apply \Cref{prop:reconstruction} to the canonical one-sided solutions $Z_-$ and
$Z_+$. This gives strictly increasing branches $R^\ell$ and $R^r$ satisfying
\[
\dot R^\ell=\sqrt{Z_-(R^\ell)},
\qquad
\dot R^r=\sqrt{Z_+(R^r)}.
\]
Since $Z_\pm\in C^1$ and $Z_\pm>0$ on their domains, the reconstructed branches
are $C^2$ away from the sonic density.

\emph{Step 2: identify the domains and glue.}
By \Cref{lem:Z-endpoints}, the left branch extends to $\xi=-\infty$ with limit
$R_-$, while the right branch extends to $\xi=+\infty$ with limit $R_+$.
By \Cref{rem:sonic-integrability}, both branches reach $R_s$ in finite $\xi$.
After translating the two branches, we may assume that both reach $R_s$ at
$\xi_s=0$.

Define
\[
R(\xi):=
\begin{cases}
R^\ell(\xi), & \xi<0,\\
R_s, & \xi=0,\\
R^r(\xi), & \xi>0.
\end{cases}
\]
Then $R$ is continuous and strictly increasing, with $R(0)=R_s$. The identities
in \textup{(ii)} hold by construction, and the endpoint limits
$R(\xi)\to R_\pm$ follow from the preceding paragraph.

\emph{Step 3: verify the weak equation.}
The hypotheses of \Cref{lem:no-defect} are satisfied by the glued profile.
Therefore,
$R\in W^{1,2}_{\mathrm{loc}}(\mathbb R)$
and $R$ satisfies \Cref{eq:R-ODE-general} on $\mathbb R$ in the sense of
distributions. Since $R$ is nondecreasing and connects $R_-$ to $R_+$, it follows
that $R\in\mathcal M$. This proves \textup{(iv)}.

\emph{Step 4: endpoint flattening.}
Now that $R\in\mathcal M$, \Cref{lem:tail-flattening} applies and gives
\[
\dot R(\xi)\to0
\qquad\text{as }\xi\to\pm\infty.
\]
This completes \textup{(iii)} and the proof.
\end{proof}

%--------------------------------------------------------------------------------------------
\subsection{Regularity and decay of the glued density profile}

\begin{proposition}[Sonic regularity of the glued profile]\label{prop:sonic-regularity}
Let $R$ be the global profile furnished by \Cref{thm:global-glue}. Then:
\begin{enumerate}[label=(\alph*)]
\item if $0<a_s<2$, then $R\in C^{0,\beta}$ near $\xi_s$ with
\[
\beta=\frac{1}{1+a_s/2}\in\left(\frac12,1\right),
\]
$R\in W^{1,r}_{\mathrm{loc}}$ for every
$1\le r<1+\frac{2}{a_s}$, and $|\dot{R}(\xi)|\to\infty$ as $\xi\to\xi_s^\pm$;
\item if $a_s=0$, then $R\in C^{0,\beta}$ near $\xi_s$ for every $\beta<1$,
$R\in W^{1,r}_{\mathrm{loc}}$ for every finite $r\ge 1$, and $|\dot{R}(\xi)|\to\infty$ as $\xi\to\xi_s^\pm$.
\end{enumerate}
In particular, in both cases the density profile is not locally Lipschitz near $\xi_s$.
\end{proposition}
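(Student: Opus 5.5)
The plan is to turn the two-sided bounds of \Cref{lem:Z-sonic-asympt} into an ODE comparison for $\delta(\xi):=|R(\xi)-R_s|$ on each side of $\xi_s$, using $\dot R=\sqrt{Z_\pm(R)}$ from \Cref{thm:global-glue}(ii). The blow-up claims come first. Since $R(\xi)\to R_s$ as $\xi\to\xi_s^\pm$ by continuity and strict monotonicity, and $Z_\pm(R)\to+\infty$ as $R\to R_s^\pm$ by \Cref{lem:Z-sonic-asympt}, we get $|\dot R(\xi)|=\sqrt{Z_\pm(R(\xi))}\to\infty$ in both cases. Hence $R$ is not locally Lipschitz near $\xi_s$.

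\emph{Hölder regularity.} Take $0<|R-R_s|<\eta$ and work with the inverse map, $|\xi-\xi_s|=\int_{R_s}^{R}\frac{d\rho}{\sqrt{Z_\pm(\rho)}}$ (one-sidedly).
\begin{itemize}
\item If $a_s>0$, the bounds \Cref{eq:Z-two-sided-power} give $|\xi-\xi_s|\simeq|R-R_s|^{1+a_s/2}$. Hence $|R(\xi)-R_s|\simeq|\xi-\xi_s|^{\beta}$ with $\beta=1/(1+a_s/2)$, and $0<a_s<2$ gives $\beta\in(\tfrac12,1)$.
\item If $a_s=0$, the bounds \Cref{eq:Z-two-sided-log} give $|\xi-\xi_s|\ge c|R-R_s|\,|\log|R-R_s||^{-1/2}\ge c_\beta|R-R_s|^{1/\beta}$ for any $\beta<1$ and small $|R-R_s|$. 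This yields $|R(\xi)-R_s|\le C|\xi-\xi_s|^{\beta}$.
\end{itemize}
To pass from this pointwise bound at $\xi_s$ to a genuine $C^{0,\beta}$ estimate on a neighborhood, I will use that $\dot R$ is monotone near $\xi_s$ on each side. Indeed, $Z_\pm$ is monotone in $|R-R_s|$ near $R_s$ up to constants, so it is simpler to argue directly as follows.
\begin{itemize}
\item For $\xi_1<\xi_2$ on the same side, $|R(\xi_2)-R(\xi_1)|=\int_{\xi_1}^{\xi_2}\dot R$.
\item By the above, $\dot R(\xi)\le C|\xi-\xi_s|^{\beta-1}$. This follows from $\dot R=\sqrt{Z}\le C|R-R_s|^{-a_s/2}$ together with the lower bound $|R-R_s|\ge c|\xi-\xi_s|^{\beta}$ in the power case; in the log case one uses a slightly weaker exponent.
\item Since $t\mapsto t^{\beta-1}$ is integrable and $\int_{\xi_1}^{\xi_2}|\xi-\xi_s|^{\beta-1}d\xi\le C|\xi_2-\xi_1|^\beta$ (concavity of $t^\beta$), we get the Hölder bound.
\end{itemize}
Points on opposite sides are handled by splitting the increment at $\xi_s$. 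The lower bound $|R-R_s|\gtrsim|\xi-\xi_s|^\beta$ in the power case comes from the upper bound on $|\xi-\xi_s|$.

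\emph{Sobolev regularity.} I will change variables as in Step~1 of \Cref{lem:no-defect}:
\[
\int_{\xi_s-\epsilon}^{\xi_s+\epsilon}|\dot R|^r\,d\xi=\int Z_-(R)^{(r-1)/2}\,dR+\int Z_+(R)^{(r-1)/2}\,dR .
\]
In the power case the integrand is $\lesssim|R-R_s|^{-a_s(r-1)/2}$, which is integrable iff $a_s(r-1)/2<1$, i.e.\ $r<1+2/a_s$. In the log case $|\log|R-R_s||^{(r-1)/2}$ is integrable for every finite $r$. Away from $\xi_s$ the profile is $C^2$ (\Cref{thm:global-glue}(i)) and bounded, which gives $W^{1,r}_{\mathrm{loc}}$.

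The main obstacle is the log case of the Hölder estimate. There the pointwise comparison has to absorb the $\sqrt{|\log|}|$ factor into an arbitrary loss of exponent. I would handle it by first proving $\dot R(\xi)\le C|\log|\xi-\xi_s||^{1/2}\le C_\beta|\xi-\xi_s|^{\beta-1}$ and then integrating as above.
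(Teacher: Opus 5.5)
Your proposal is correct and follows essentially the same route as the paper. You invert $\dot R=\sqrt{Z_\pm(R)}$ and integrate the two-sided bounds of \Cref{lem:Z-sonic-asympt} to compare $|R-R_s|$ with $|\xi-\xi_s|$, then change variables to $R$ for the $L^r$ integrability of $\dot R$; your explicit upgrade from the pointwise bound at $\xi_s$ to a genuine $C^{0,\beta}$ estimate, via $|\dot R(\xi)|\le C|\xi-\xi_s|^{\beta-1}$ and subadditivity of $t\mapsto t^\beta$, is a step the paper leaves implicit and is a worthwhile addition.
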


\begin{proof}
Away from $\xi_s$, the profile is $C^2$ by \Cref{thm:global-glue}; 
hence all regularity assertions are local questions near the sonic crossing. On each branch,
\[
\frac{d\xi}{dR}=\frac{1}{\sqrt{Z_\pm(R)}}.
\]
If $a_s>0$, then \Cref{eq:Z-two-sided-power} implies there exist $c_1,C_1>0$ such that
\[
c_1|R-R_s|^{a_s/2}\le \frac{d\xi}{dR}\le C_1|R-R_s|^{a_s/2}
\]
near $R_s$. Integrating yields constants $c_2,C_2>0$ with
\[
c_2|\xi-\xi_s|^\beta\le |R-R_s|\le C_2|\xi-\xi_s|^\beta,
\qquad \beta=(1+a_s/2)^{-1}.
\]
Therefore, for suitable $c_3,C_3>0$,
\[
c_3|\xi-\xi_s|^{\beta-1}\le |\dot{R}(\xi)|\le C_3|\xi-\xi_s|^{\beta-1},
\qquad 0<|\xi-\xi_s|<\delta,
\]
where $\dot R$ denotes the classical derivative on the two $C^2$ branches. 
These local estimates give the stated H\"older behavior near $\xi_s$. Moreover, $|\dot R(\xi)|^r \lesssim |\xi-\xi_s|^{r(\beta-1)}$,
which is locally integrable precisely when $r(1-\beta)<1$. Since
\[
\frac{1}{1-\beta}=1+\frac{2}{a_s},
\]
we obtain $R\in W^{1,r}_{\mathrm{loc}}$ for every
$1\le r<1+\frac{2}{a_s}$. Since $\beta<1$, one also has
$|\dot R(\xi)|\to\infty$ as $\xi\to\xi_s^\pm$, so $R$ cannot be locally Lipschitz at $\xi_s$.

If $a_s=0$, then \Cref{eq:Z-two-sided-log} gives
\[
\frac{c_1}{\sqrt{|\log|R-R_s||}}
\le \frac{d\xi}{dR}
\le
\frac{C_1}{\sqrt{|\log|R-R_s||}}
\]
near $R_s$. Integrating gives
\[
|\xi-\xi_s|
\gtrsim
\int_0^{|R-R_s|}
\frac{ds}{\sqrt{|\log s|}}
\gtrsim
\frac{|R-R_s|}{\sqrt{|\log|R-R_s||}},
\]
because, for $x>0$ sufficiently small,
\[
\int_0^x \frac{ds}{\sqrt{|\log s|}}
\ge
\int_{x/2}^{x} \frac{ds}{\sqrt{|\log s|}}
\ge
\frac{1}{\sqrt{2|\log x|}}
\int_{x/2}^{x} ds
=
\frac{x}{2\sqrt{2|\log x|}}.
\]
Equivalently, for every $\beta<1$ and $|R-R_s|$ sufficiently small,
\[
|R-R_s|\le C_\beta |\xi-\xi_s|^\beta,
\]
because
\[
|R-R_s|^{1-\beta}|\log|R-R_s||^{\beta/2}\to0
\qquad (R\to R_s).
\]
These local estimates give the stated H\"older behavior near $\xi_s$. Moreover,
\[
\int |\dot R|^r\,d\xi
=
\int Z_\pm(R)^{(r-1)/2}\,dR
\lesssim
\int |\log|R-R_s||^{(r-1)/2}\,dR,
\]
which is finite for every finite $r\ge 1$. Finally, on the two $C^2$ branches,
\[
|\dot R(\xi)|=\sqrt{Z_\pm(R(\xi))}\to\infty
\qquad \text{as } \xi\to\xi_s^\pm,
\]
so $R$ is not locally Lipschitz at $\xi_s$.
\end{proof}

\begin{lemma}[Exponential tails from endpoint expansions]\label{lem:tail-decay}
Let $R$ be the global profile from \Cref{thm:global-glue}. For every
$\lambda \in (0, 1/\sqrt{\alpha})$, there exist constants $C,\xi_0>0$ such that
\[
|R(\xi)-R_+|+|\dot{R}(\xi)|\le C e^{-\lambda \xi}
\qquad (\xi\ge \xi_0),
\]
and
\[
|R(\xi)-R_-|+|\dot{R}(\xi)|\le C e^{\lambda \xi}
\qquad (\xi\le -\xi_0).
\]
In particular, $\alpha^{-1/2}$ is the linearized exponential tail rate on both tails.
\end{lemma}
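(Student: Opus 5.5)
The plan is to work entirely in the density variable on the right tail (the left tail is symmetric) and to convert the endpoint expansion of \Cref{lem:Z-endpoints} into an exponential estimate via the reconstruction map $\Xi_+$ of \Cref{prop:reconstruction}.

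\emph{Step 1: a linear lower bound for the slope.} For $\xi>\xi_s$ the glued profile satisfies $\dot R=\sqrt{Z_+(R)}$ by \Cref{thm:global-glue}(ii). \Cref{lem:Z-endpoints} gives
\[
Z_+(R)=\tfrac1\alpha (R_+-R)^2\bigl(1+O(R_+-R)\bigr).
\]
Fix $\lambda\in(0,\alpha^{-1/2})$. Choose $\delta>0$ so small that $\sqrt{Z_+(R)}\ge \lambda (R_+-R)$ for $R\in(R_+-\delta,R_+)$. This is possible because $\sqrt{Z_+(R)}=\alpha^{-1/2}(R_+-R)(1+O(R_+-R))$ and $\lambda<\alpha^{-1/2}$.

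\emph{Step 2: Gr\"onwall-type comparison.} Since $R(\xi)\uparrow R_+$ by \Cref{thm:global-glue}(iii), there is $\xi_0$ with $R(\xi)>R_+-\delta$ for all $\xi\ge\xi_0$. Put $D(\xi):=R_+-R(\xi)>0$. On $[\xi_0,\infty)$ we have $\dot D=-\sqrt{Z_+(R)}\le-\lambda D$. Hence $\frac{d}{d\xi}\log D\le-\lambda$, and so $D(\xi)\le D(\xi_0)e^{-\lambda(\xi-\xi_0)}$. Equivalently, one can integrate $\xi-\xi_0=\int_{R(\xi_0)}^{R(\xi)}d\rho/\sqrt{Z_+(\rho)}$ using the logarithmic asymptotics in \Cref{lem:Z-endpoints}. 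Absorbing $e^{\lambda\xi_0}$ into $C$ gives the bound for $|R-R_+|$.

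\emph{Step 3: the derivative.} By the expansion again, $|\dot R|=\sqrt{Z_+(R)}\le C'(R_+-R)$ near $R_+$. The derivative estimate therefore follows from the estimate on $D$ with the same rate.

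\emph{Step 4: left tail and sharpness.} On the left tail, $R(\xi)-R_-$ satisfies $\frac{d}{d\xi}(R-R_-)=\sqrt{Z_-(R)}\ge\lambda(R-R_-)$ near $R_-$. Integrating backward in $\xi$ gives $R(\xi)-R_-\le Ce^{\lambda\xi}$ for $\xi\le-\xi_0$, and the derivative bound follows as in Step 3. For the final remark, the matching upper bound $\sqrt{Z_\pm}\le\Lambda|R-R_\pm|$ for any $\Lambda>\alpha^{-1/2}$ gives a reverse exponential lower bound. Together with \Cref{lem:equilibria} (eigenvalues $\pm\alpha^{-1/2}$), this identifies $\alpha^{-1/2}$ as the sharp rate.

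The only mild obstacle is Step 1. The $O(|R-R_\pm|^3)$ remainder forces the loss from $\alpha^{-1/2}$ to an arbitrary $\lambda<\alpha^{-1/2}$; the claimed rate is exactly what that loss allows, so no finer expansion is needed.
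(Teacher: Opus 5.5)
Your proposal is correct and follows essentially the same route as the paper's proof. Both use \Cref{lem:Z-endpoints} to sandwich $|\dot y_\pm|$ between $\lambda y_\pm$ and a constant multiple of $y_\pm$ on each tail, integrate the differential inequality (forward on the right tail, backward on the left), and bound $|\dot R|$ by the upper comparison. Your extra reverse bound with $\Lambda>\alpha^{-1/2}$ actually justifies the sharpness remark, which the paper only asserts.
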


\begin{proof}
Choose $\xi_0$ large enough so that $\xi\ge \xi_0$ lies on the right branch and
$\xi\le -\xi_0$ lies on the left branch. Set
\[
y_-(\xi):=R(\xi)-R_- \qquad (\xi<\xi_s),
\qquad
y_+(\xi):=R_+-R(\xi) \qquad (\xi>\xi_s).
\]
Then $y_\pm>0$ and
\[
\dot y_-(\xi)=\sqrt{Z_-(R(\xi))},
\qquad
-\dot y_+(\xi)=\sqrt{Z_+(R(\xi))}.
\]
By \Cref{lem:Z-endpoints},
\[
\frac{\sqrt{Z_\pm(R)}}{|R-R_\pm|}
\to
\frac{1}{\sqrt{\alpha}}
\qquad (R\to R_\pm).
\]
Fix $\lambda\in(0,\alpha^{-1/2})$. Then, after increasing $\xi_0$ if necessary,
there exists $\mu>\alpha^{-1/2}$ such that
\[
\lambda y_\pm(\xi)\le |\dot y_\pm(\xi)|\le \mu y_\pm(\xi)
\]
on the corresponding tails.

On the right tail, $\dot y_+\le -\lambda y_+$, hence
\[
y_+(\xi)\le y_+(\xi_0)e^{-\lambda(\xi-\xi_0)}
\qquad (\xi\ge \xi_0).
\]
On the left tail, $\dot y_-\ge \lambda y_-$, and integrating from $\xi$ to
$-\xi_0$ gives
\[
y_-(\xi)\le y_-(-\xi_0)e^{\lambda(\xi+\xi_0)}
\qquad (\xi\le -\xi_0).
\]
Finally, $|\dot R|=|\dot y_\pm|\le \mu y_\pm$ on the same tails, so the stated
bounds follow after absorbing fixed prefactors into $C$.
\end{proof}

\begin{corollary}[Global decay and Sobolev consequences]\label{cor:global-profile-regularity}
Let $R$ be the global profile from \Cref{thm:global-glue}.
\begin{enumerate}[label=\textup{(\alph*)}]
\item If $0<a_s<2$, then $\dot{R}\in L^r(\mathbb{R})$ for every
$1\le r<1+\frac{2}{a_s}$. If $a_s=0$, then $\dot{R}\in L^r(\mathbb{R})$ for
every finite $r\ge 1$.
\item If $R_{\mathrm{int}}\in C^\infty(\mathbb{R})$ satisfies
\[
R_{\mathrm{int}}(\xi)=R_-
\quad\text{for }\xi\le \xi_s-1,
\qquad
R_{\mathrm{int}}(\xi)=R_+
\quad\text{for }\xi\ge \xi_s+1,
\]
then
\[
R-R_{\mathrm{int}}\in L^\mu(\mathbb{R})
\qquad \text{for all }\mu\in[1,\infty],
\]
and, for every $r$ in the range specified in \textup{(a)},
\[
R-R_{\mathrm{int}}\in W^{1,r}(\mathbb{R}).
\]
\end{enumerate}
\end{corollary}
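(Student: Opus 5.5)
The proof splits the real line into three pieces: a compact neighborhood $[\xi_s-\delta,\xi_s+\delta]$ of the sonic crossing, and the two tails $(-\infty,\xi_s-\delta]$ and $[\xi_s+\delta,\infty)$. We then combine local integrability near $\xi_s$ (from \Cref{prop:sonic-regularity}) with exponential decay on the tails (from \Cref{lem:tail-decay}).

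For part (a), fix $r$ in the admissible range.
\begin{itemize}
\item Near the crossing, $\dot R\in L^r([\xi_s-\delta,\xi_s+\delta])$ because \Cref{prop:sonic-regularity} gives $R\in W^{1,r}_{\mathrm{loc}}$ there.
\item On the intermediate compact intervals between $\pm\xi_0$ and $\xi_s\pm\delta$, $R$ is $C^2$ by \Cref{thm:global-glue}(i), so $\dot R$ is bounded and hence $L^r$.
\item On $|\xi|\ge\xi_0$, \Cref{lem:tail-decay} with any fixed $\lambda\in(0,\alpha^{-1/2})$ gives $|\dot R(\xi)|\le Ce^{-\lambda|\xi|}$. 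This is in $L^r$ for every $r\ge1$.
\end{itemize}
Summing the three contributions gives $\dot R\in L^r(\mathbb R)$. The case $a_s=0$ is identical, since every finite $r$ is then locally admissible near $\xi_s$.

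For part (b), set $D:=R-R_{\mathrm{int}}$.
\begin{itemize}
\item Both functions take values in a bounded set: $R$ lies in $[R_-,R_+]$, and $R_{\mathrm{int}}$ is continuous and constant outside a compact set. Hence $D\in L^\infty$.
\item For $\xi\ge\max(\xi_s+1,\xi_0)$ we have $D=R-R_+$, which is bounded by $Ce^{-\lambda\xi}$ by \Cref{lem:tail-decay}. Symmetrically, for $\xi\le\min(\xi_s-1,-\xi_0)$ we have $D=R-R_-$, bounded by $Ce^{\lambda\xi}$.
\item On the remaining compact set, $D$ is bounded.
\end{itemize}
So $D$ is bounded and exponentially decaying, and therefore lies in $L^\mu$ for every $\mu\in[1,\infty]$.

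For the derivative, first note that $D\in W^{1,1}_{\mathrm{loc}}$, because $R\in W^{1,2}_{\mathrm{loc}}$ by \Cref{thm:global-glue} and $R_{\mathrm{int}}$ is smooth. Its weak derivative is $\dot D=\dot R-\dot R_{\mathrm{int}}$. Here $\dot R_{\mathrm{int}}$ is smooth with compact support, so it lies in every $L^r$, and $\dot R\in L^r(\mathbb R)$ by part (a). Together with $D\in L^r$, this gives $D\in W^{1,r}(\mathbb R)$.

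There is no real obstacle here. The only point needing care is that the weak derivative of the glued profile coincides with its classical derivative on the two $C^2$ branches, so that the pointwise bounds from \Cref{prop:sonic-regularity} and \Cref{lem:tail-decay} apply to the Sobolev derivative. This follows because $R$ is continuous, lies in $W^{1,2}_{\mathrm{loc}}$, and is $C^2$ off the single point $\xi_s$: a single point has measure zero and the function has no jump there.
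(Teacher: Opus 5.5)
Your proposal is correct and follows essentially the same route as the paper's proof. You take near-sonic integrability from the sonic-regularity proposition and exponential tails from the tail-decay lemma. For (b) you use the same split into tails and a compact region, together with $\dot R_{\mathrm{int}}\in C_c^\infty(\mathbb{R})$. Your explicit treatment of the intermediate compact intervals, and your identification of the weak derivative with the classical one off $\xi_s$, are small rigor points that the paper leaves implicit.
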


\begin{proof}
Local integrability near $\xi_s$ is the content of
\Cref{prop:sonic-regularity}. Away from a fixed neighborhood of $\xi_s$,
\Cref{lem:tail-decay} gives exponential decay of $|\dot R|$, hence
$\dot R\in L^r$ there for every $r\ge1$. Combining the near-sonic and tail
regions proves \textup{(a)}.

For \textup{(b)}, on the tails $R_{\mathrm{int}}$ is constant, so
$R-R_{\mathrm{int}}=R-R_\pm$, which decays exponentially by
\Cref{lem:tail-decay}. On the remaining compact region, $R-R_{\mathrm{int}}$ is
continuous and hence belongs to $L^\mu$ for every $\mu\in[1,\infty]$. Therefore
$R-R_{\mathrm{int}}\in L^\mu(\mathbb{R})$ for all such $\mu$.

Moreover,
$\dot R-\dot R_{\mathrm{int}}\in L^r(\mathbb{R})$
because $\dot R\in L^r(\mathbb{R})$ by \textup{(a)} and
$\dot R_{\mathrm{int}}\in C_c^\infty(\mathbb{R})$. Since also
$R-R_{\mathrm{int}}\in L^r(\mathbb{R})$, it follows that
$R-R_{\mathrm{int}}\in W^{1,r}(\mathbb{R})$.
\end{proof}

%--------------------------------------------------------------------------------------------
\subsection{Uniqueness and reconstructed traveling-wave fields}

\begin{theorem}[Uniqueness modulo translation]\label{thm:unique-translation}
Assume the hypotheses of \Cref{thm:global-glue} hold for the fixed parameters
\((c,m,q,\alpha,p)\). Let \(R,\widetilde R:\mathbb{R}\to(R_-,R_+)\) be two global profiles
connecting \(R_-\) to \(R_+\) in the sense of \Cref{thm:global-glue}. Then there exists \(\xi_0\in\mathbb{R}\) such that
\[
    \widetilde R(\xi)=R(\xi-\xi_0)\qquad(\xi\in\mathbb{R}).
\]
If both profiles are normalized by the phase condition
\(R(0)=\widetilde R(0)=R_s\), then \(\widetilde R\equiv R\).
\end{theorem}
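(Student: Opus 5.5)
The plan is to use the one-sided representation $\dot R=\sqrt{Z_\pm(R)}$, which the paper has already identified canonically, and to show that this first-order autonomous relation fixes the profile on each side of the sonic crossing up to the crossing time. Both $R$ and $\widetilde R$ satisfy properties (i)--(ii) of \Cref{thm:global-glue}. (Alternatively, if one only knows that they lie in $\mathcal M$, \Cref{thm:strict-mono}, \Cref{rem:unique-sonic-crossing} and \Cref{cor:Z-identification} give the same identities.) Each is strictly increasing and continuous. Each crosses $R_s$ exactly once, at points $\xi_s$ and $\widetilde\xi_s$ respectively. Each satisfies $\dot R^2=Z_-(R)$ on the left branch and $\dot R^2=Z_+(R)$ on the right branch, where $Z_\pm$ are the unique canonical solutions of \Cref{lem:Z-solution-formula}. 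Since $\dot R>0$ off the crossing, we get $\dot R=\sqrt{Z_\pm(R)}$ with the positive root. Set $\xi_0:=\widetilde\xi_s-\xi_s$.

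On the left branch I will avoid ODE uniqueness arguments at the degenerate endpoint and instead invert the profile directly, using density as the independent variable. For $\xi<\xi_s$ the map $\xi\mapsto R(\xi)$ is a $C^1$ bijection onto $(R_-,R_s)$, with inverse $\xi(R)$ satisfying $d\xi/dR=1/\sqrt{Z_-(R)}$. By \Cref{rem:sonic-integrability} this derivative is integrable up to $R_s$. Integrating from $R$ to $R_s$ and using continuity of $R$ at $\xi_s$ gives
\[
\xi(R)=\xi_s-\int_R^{R_s}\frac{d\rho}{\sqrt{Z_-(\rho)}},\qquad R\in(R_-,R_s).
\]
The same computation gives the inverse $\widetilde\xi(R)$ of $\widetilde R$, with $\xi_s$ replaced by $\widetilde\xi_s$. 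Hence $\widetilde\xi(R)=\xi(R)+\xi_0$ for all $R\in(R_-,R_s)$. Inverting, $\widetilde R(\xi)=R(\xi-\xi_0)$ for $\xi<\widetilde\xi_s$. The right branch is handled identically: integrate from $R_s$ to $R\in(R_s,R_+)$ with $Z_+$, which gives the same identity for $\xi>\widetilde\xi_s$. At $\xi=\widetilde\xi_s$ both sides equal $R_s$. This proves the translation statement. Under the normalization $R(0)=\widetilde R(0)=R_s$, the single-crossing property forces $\xi_s=\widetilde\xi_s=0$, so $\xi_0=0$ and $\widetilde R\equiv R$.

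The main obstacle is matching the two branches at the sonic point. Near $R_s$ the vector field $\sqrt{Z_\pm(R)}$ blows up, so standard Picard--Lindelöf uniqueness is unavailable there. A priori, a profile could also pick up an independent shift on each side. This is exactly why I anchor both branches at the common crossing point. The anchoring requires the improper integral $\int^{R_s}Z_\pm^{-1/2}$ to converge, which \Cref{lem:Z-sonic-asympt} and \Cref{rem:sonic-integrability} supply. It also requires $R$ to attain $R_s$ continuously at a single point, which follows from continuity and strict monotonicity. A secondary check is that the positive square root is the correct one; this holds because $\dot R>0$ on $\Omega$ by \Cref{thm:strict-mono}.
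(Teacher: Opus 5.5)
Your proposal is correct and takes essentially the same route as the paper. The paper also inverts each branch, notes that the inverse maps of $R$ and $\widetilde R$ share the derivative $1/\sqrt{Z_\pm(\rho)}$ on $I_\pm$ and so differ by constants $c_\pm$, and then shows $c_-=c_+=\widetilde\xi_s-\xi_s$ by letting $\rho\to R_s^\pm$. The only cosmetic difference is in the anchoring: you anchor through the convergent improper integral $\int^{R_s}Z_\pm^{-1/2}$, whereas the paper passes to the limit in the constant difference, which needs only continuity of the inverse maps at $R_s$ and not the integrability from \Cref{rem:sonic-integrability}.
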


\begin{proof}
Let $\xi_s$ and $\widetilde\xi_s$ be the sonic crossing points of $R$ and
$\widetilde R$, respectively, and set
$I_-:=(R_-,R_s)$, $I_+:=(R_s,R_+).$
Since both profiles are strictly increasing, their inverse maps
$\Psi_\pm,\widetilde\Psi_\pm:I_\pm\to\mathbb R$
are well defined on the two density intervals. By \Cref{thm:global-glue}, on each
branch one has
\[
\dot R^2=Z_\pm(R),
\qquad
\dot{\widetilde R}^{\,2}=Z_\pm(\widetilde R).
\]
Since the profiles are increasing, this gives
\[
\Psi_\pm'(\rho)
=
\frac{1}{\sqrt{Z_\pm(\rho)}}
=
\widetilde\Psi_\pm'(\rho),
\qquad \rho\in I_\pm.
\]
Hence, there exist constants $c_\pm$ such that $\widetilde\Psi_\pm(\rho)=\Psi_\pm(\rho)+c_\pm$.

Taking the limit $\rho\to R_s^\pm$ gives $c_\pm=\widetilde\xi_s-\xi_s$.
Thus $c_-=c_+=:\xi_0$, so the inverse maps differ by the same additive constant
on both branches. Therefore,
$\widetilde R(\xi)=R(\xi-\xi_0)$
on both sides of the sonic point, and the identity holds at the sonic point by
continuity. If $R(0)=\widetilde R(0)=R_s$, then
$\xi_s=\widetilde\xi_s=0$, so $\xi_0=0$ and $\widetilde R\equiv R$.
\end{proof}

We next transfer these properties from the density profile to the reconstructed
traveling-wave fields.

\begin{theorem}[Full solution and its regularity]\label{thm:full-solution}
Let $R$ denote the global profile furnished by \Cref{thm:global-glue}, unique up to
translation by \Cref{thm:unique-translation}. Define
\[
U(\xi)=c+\frac{m}{R(\xi)},\qquad
e(\xi)=e(R(\xi)),\qquad
S(\xi)=\mathcal S(R(\xi)),
\]
and
\[
\mathcal E(\xi)=R(\xi)e(\xi)+\frac12 R(\xi)U(\xi)^2.
\]
Then:
\begin{enumerate}[label=\textup{(\roman*)}]
\item $(R,U,\mathcal E,S)$ is a weak solution of
\Crefrange{eq:TWa}{eq:TWd} on $\mathbb R$.
\item $R$, $U$, $\mathcal E$, $e$, and $S$ restrict to $C^2$ functions on
$(-\infty,\xi_s)\cup(\xi_s,\infty)$.
\item $R$, $U$, $\mathcal E$, $e$, and $S$ have the same local H\"older regularity
near $\xi_s$ as $R$ in \Cref{prop:sonic-regularity}; namely, if $0<a_s<2$ they
are $C^{0,\beta}$ with $\beta=(1+a_s/2)^{-1}$, while if $a_s=0$ they are
$C^{0,\beta}$ for every $\beta<1$.
\item $R$, $U$, $\mathcal E$, $e$, and $S$ belong to $L^\infty(\mathbb R)$.
Moreover, if $R_{\mathrm{int}}$ is as in
\Cref{cor:global-profile-regularity} and takes values in $[R_-,R_+]$, and if
$U_{\mathrm{int}}$, $\mathcal E_{\mathrm{int}}$, $e_{\mathrm{int}}$, and
$S_{\mathrm{int}}$ are obtained from $R_{\mathrm{int}}$ by the same
reconstruction formulas, then
\[
R-R_{\mathrm{int}},\quad U-U_{\mathrm{int}},\quad
\mathcal E-\mathcal E_{\mathrm{int}},\quad e-e_{\mathrm{int}},\quad
S-S_{\mathrm{int}}
\]
belong to $L^\mu(\mathbb R)$ for all $\mu\in[1,\infty]$.
\item For every $r$ in the range specified in
\Cref{cor:global-profile-regularity}, $\dot{R}, \dot{U}, \dot{\mathcal{E}}, \dot{e}, \dot{S} \in L^r(\mathbb{R})$.
\item For the same range of $r$,
\[
R-R_{\mathrm{int}},\quad U-U_{\mathrm{int}},\quad
\mathcal E-\mathcal E_{\mathrm{int}},\quad e-e_{\mathrm{int}},\quad
S-S_{\mathrm{int}}
\]
belong to $W^{1,r}(\mathbb R)$.
\end{enumerate}
\end{theorem}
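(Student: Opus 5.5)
The plan is to transfer everything from the density profile $R$ to the other fields through the explicit reconstruction maps. All of them are smooth functions of $R$ on a compact subset of $(0,\infty)$. Concretely, I will write each field as $F(\xi)=f(R(\xi))$ with $f\in\{R\mapsto c+m/R,\ e(R),\ \mathcal S(R),\ Re(R)+\tfrac12 R(c+m/R)^2\}$. Each $f$ is $C^2$ on a neighborhood of $[R_-,R_+]$: $e$ is rational with poles only at $0$, and $\mathcal S$ is $C^2$ because $p\in C^2$. I will set $L_f:=\sup_{[R_-,R_+]}|f'|$. Item (ii) then follows from the chain rule and \Cref{thm:global-glue}(i). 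Item (iii) follows from $|f(R(\xi))-f(R(\eta))|\le L_f|R(\xi)-R(\eta)|$ together with \Cref{prop:sonic-regularity}. The first part of (iv) holds because $R$ takes values in $[R_-,R_+]$.

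For the remaining estimates I will compare $f(R)$ with $f(R_{\mathrm{int}})$. Both functions take values in $[R_-,R_+]$, so by the mean value theorem $|f(R)-f(R_{\mathrm{int}})|\le L_f|R-R_{\mathrm{int}}|$, and (iv) follows from \Cref{cor:global-profile-regularity}(b). For (v), $R$ is absolutely continuous and monotone, so the chain rule for the Lipschitz-on-range function $f$ gives the weak derivative $\dot F=f'(R)\dot R$. Hence $|\dot F|\le L_f|\dot R|\in L^r$ by \Cref{cor:global-profile-regularity}(a). For (vi), I will write
\[
\dot F-\dot F_{\mathrm{int}}=f'(R)\bigl(\dot R-\dot R_{\mathrm{int}}\bigr)+\bigl(f'(R)-f'(R_{\mathrm{int}})\bigr)\dot R_{\mathrm{int}}.
\]
The first term lies in $L^r$. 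The second term has compact support, because $\dot R_{\mathrm{int}}\in C_c^\infty$, and it is bounded, so it also lies in $L^r$. Combined with (iv), this gives membership in $W^{1,r}$.

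The main step is (i). Equation \eqref{eq:TWd} is exactly \Cref{eq:R-ODE-general} after substituting $S=\mathcal S(R)$ and $\dot U=-m\dot R/R^2$. Here $R^{-1}\dot S=\frac{\mathcal S'(R)}{R}\dot R=F$ is the flux of \Cref{lem:no-defect}, and $\dot U^2=\frac{m^2}{R^4}\dot R^2\in L^1_{\mathrm{loc}}$. The distributional form of \eqref{eq:TWd} therefore coincides with \eqref{eq:weak-profile}, which holds by \Cref{thm:global-glue}(iv).

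For \eqref{eq:TWa}--\eqref{eq:TWc} the plan is to observe that the fluxes are pointwise constant. Since $R(U-c)=m$, $mU+\tilde p(R,e(R))+S=q$ by \eqref{eq:stress-function}, and the energy flux equals $k$ by the derivation of \eqref{eq:e-explicit}, each equation in \eqref{eq:TW} has the form $\frac{d}{d\xi}(\text{quantity})=0$. Explicitly, \eqref{eq:TWa} reads $\frac{d}{d\xi}(RU-cR)=\frac{d}{d\xi}m$. The quantity is an $L^1_{\mathrm{loc}}$ function equal to a constant everywhere, so it holds distributionally for every test function. The only point to verify is the energy identity. I will check that inverting \eqref{eq:e-explicit} with $\mathcal E=Re+\tfrac12RU^2$ and $U=c+m/R$ reproduces $(\mathcal E+q-mU)U-c\mathcal E=k$ identically in $R$, since $e_0$ and $A$ were defined precisely so that this holds.

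I expect (i) to be the only delicate part. Even there, the possible defect at $\xi_s$ has already been excluded by \Cref{lem:no-defect}, so the main care lies in confirming that each product appearing in the weak form is locally integrable. It is, because $R$ is bounded away from zero and $\dot R\in L^2_{\mathrm{loc}}$.
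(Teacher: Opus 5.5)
Your proposal is correct and follows essentially the same route as the paper. Each field is transferred from $R$ through a $C^2$ reconstruction map: Lipschitz bounds on $[R_-,R_+]$ give the H\"older, $L^\infty$ and $L^\mu$ statements, and the chain rule gives the $L^r$ and $W^{1,r}$ statements. Equations \eqref{eq:TWa}--\eqref{eq:TWc} hold because the fluxes are pointwise constant, and \eqref{eq:TWd} is identified with the weak profile equation \eqref{eq:weak-profile}. Your splitting of $\dot F-\dot F_{\mathrm{int}}$ in (vi) and your explicit check of the energy-flux identity are only cosmetic refinements of the paper's argument.
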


\begin{proof}
For the density component $R$, the asserted regularity, decay, and Sobolev
properties follow from
\Cref{thm:global-glue,prop:sonic-regularity,cor:global-profile-regularity}.

For the reconstructed fields, write
\[
U=F_U(R),\qquad e=F_e(R),\qquad
\mathcal E=F_{\mathcal E}(R),\qquad S=F_S(R),
\]
where
\[
F_U(\rho)=c+\frac{m}{\rho},\qquad
F_e(\rho)=e(\rho),\qquad
F_S(\rho)=\mathcal S(\rho),
\]
and
\[
F_{\mathcal E}(\rho)=\rho F_e(\rho)+\frac12 \rho F_U(\rho)^2.
\]
Under the standing assumptions, these reconstruction maps are $C^2$ on a
neighborhood of $[R_-,R_+]$. Since $R(\xi)\in[R_-,R_+]$, the maps and their
first derivatives are bounded on the range of $R$.

Since the reconstruction maps are $C^2$ on a neighborhood of $[R_-,R_+]$,
composition with $R$ gives \textup{(ii)} on the nonsonic intervals. Moreover,
these maps are Lipschitz on $[R_-,R_+]$. Hence, for any reconstructed field
$f=F(R)$,
\[
|f(\xi_1)-f(\xi_2)|
\le
\|F'\|_{L^\infty([R_-,R_+])}|R(\xi_1)-R(\xi_2)|,
\]
which transfers the H\"older regularity of $R$ to $f$ and proves
\textup{(iii)}. Boundedness of the same reconstruction maps on $[R_-,R_+]$
also gives the $L^\infty$ part of \textup{(iv)}.

Finally, defining $f_{\mathrm{int}}:=F(R_{\mathrm{int}})$, the same Lipschitz
bound gives
\[
|f-f_{\mathrm{int}}|
\le
\|F'\|_{L^\infty([R_-,R_+])}|R-R_{\mathrm{int}}|.
\]
Therefore \Cref{cor:global-profile-regularity} implies
$f-f_{\mathrm{int}}\in L^\mu(\mathbb R)$ for all $\mu\in[1,\infty]$, completing
\textup{(iv)}.

For \textup{(v)}, the chain rule gives
$\dot f(\xi)=F'(R(\xi))\dot R(\xi)$.
Hence,
\[
|\dot f(\xi)|
\le
\sup_{\rho\in[R_-,R_+]}|F'(\rho)|\,|\dot R(\xi)|,
\]
and the claimed $L^r$ integrability follows from
\Cref{cor:global-profile-regularity}.

For \textup{(vi)}, we already know $f-f_{\mathrm{int}}\in L^r(\mathbb R)$.
Moreover,
\[
\dot f-\dot f_{\mathrm{int}}
=
F'(R)\dot R-F'(R_{\mathrm{int}})\dot R_{\mathrm{int}}.
\]
The first term is in $L^r$ by \textup{(v)}, while the second is in $L^r$ because
$F'(R_{\mathrm{int}})$ is bounded and $\dot R_{\mathrm{int}}$ is smooth with
compact support. Thus $f-f_{\mathrm{int}}\in W^{1,r}(\mathbb R)$.

It remains to verify the weak traveling-wave equations. The flux identities
\[
R(U-c)=m,
\qquad
mU+p(R)+S=q,
\qquad
(\mathcal E+p(R)+S)U-c\mathcal E=k
\]
hold pointwise by construction, so the weak forms of
\Crefrange{eq:TWa}{eq:TWc} follow immediately. Finally, the closure equation
\Cref{eq:TWd} is exactly the weak profile equation for $R$ after substituting
\[
S(\xi)=\mathcal S(R(\xi)),
\qquad
\dot U(\xi)=-\frac{m}{R(\xi)^2}\dot R(\xi),
\]
which is valid a.e. Since $R\in\mathcal M$ by \Cref{thm:global-glue}, this weak
profile equation holds. Hence $(R,U,\mathcal E,S)$ is a weak solution of
\Crefrange{eq:TWa}{eq:TWd}.
\end{proof}

\section{Small-\texorpdfstring{$\alpha$}{alpha} asymptotics and the $\alpha \rightarrow 0^+$ limit}\label{sec:asymptotics}

In this section, we study the small-$\alpha$ asymptotics of the IGR compressive shock solution and in particular, discuss in what sense the solution converges to the entropy-admissible Euler shock as $\alpha \rightarrow 0^+$. The main tool for the analysis will be to show that the shock profile can be expressed as a scaling of an $\alpha$-independent profile, from which several results follow immediately.
In particular, the spatiotemporal rescaling $(x,t) \mapsto (x/\sqrt{\alpha}, t/\sqrt{\alpha})$ renders the IGR Euler system, given in \Cref{eq:IGR}, parameter independent, effectively setting $\alpha = 1$. This observation is the basis for the analysis in this section. 

%-------------------------------------------------------------

\subsection{Scaling structure of the $Z$ equation}

Unlike the previous discussion where the $\alpha$ dependence was implicit, here, for any $\alpha>0$, we will explicitly denote $\alpha$-dependent quantities with a subscript $\alpha$. Throughout this subsection, $Z_\alpha$ is understood branchwise on either nonsonic density interval, with the corresponding outer endpoint condition. Recall from \Cref{prop:Z-equation} that $Z_\alpha(R)$ satisfies the linear ODE
\begin{equation}
Z'_\alpha + P(R) Z_\alpha = Q_\alpha(R),\ 
P(R)=2\!\left(
\frac{\mathcal S''(R)}{\mathcal S'(R)}
-\frac{1}{R}
+\frac{2m^2}{R^3\mathcal S'(R)}
\right),\
Q_\alpha(R)=\frac{2}{\alpha}\frac{\mathcal S(R)}{\mathcal S'(R)}.
\label{eq:Z-alpha}
\end{equation}
The dependence on $\alpha$ enters only through the prefactor in $Q_\alpha$. We can therefore obtain $Z_\alpha$ as an appropriate rescaling of an $\alpha$-independent solution.

\begin{proposition}[$Z_\alpha$ from rescaling of an $\alpha$-independent profile]
For $\alpha>0$, let $Z_\alpha$ denote the canonical one-sided solution of \Cref{eq:Z-alpha}. Then
\[
Z_\alpha(R) = \alpha^{-1}\,\mathcal Z(R),
\]
where $\mathcal Z$ is the corresponding canonical one-sided solution of the $\alpha$-independent problem
\begin{equation}
\mathcal Z' + P(R)\mathcal Z
=
2\frac{\mathcal S(R)}{\mathcal S'(R)}.
\label{eq:Z-reduced}
\end{equation}
Equivalently, $\mathcal Z=Z_1$. The outer endpoint condition is inherited branchwise: on the left branch $\mathcal Z(R_-)=0$, while on the right branch $\mathcal Z(R_+)=0$.
\end{proposition}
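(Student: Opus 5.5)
The plan is to exploit linearity of \Cref{eq:Z-alpha} together with the uniqueness part of \Cref{lem:Z-solution-formula}. The coefficient $P$ does not depend on $\alpha$, and $Q_\alpha=\alpha^{-1}Q_1$ with $Q_1(R)=2\mathcal S(R)/\mathcal S'(R)$. Consequently, if $\mathcal Z$ solves \Cref{eq:Z-reduced} on a branch, then $\alpha^{-1}\mathcal Z$ solves \Cref{eq:Z-alpha} on the same branch. Indeed,
\[
(\alpha^{-1}\mathcal Z)'+P\,\alpha^{-1}\mathcal Z=\alpha^{-1}Q_1=Q_\alpha .
\]
Moreover, $\alpha^{-1}\mathcal Z$ inherits the outer endpoint condition $\mathcal Z(R_\mp)=0$ and the regularity class $C^1$ on the open interval with continuity up to the outer endpoint.

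First I would apply \Cref{lem:Z-solution-formula} with $\alpha=1$. This produces the canonical one-sided solution $\mathcal Z:=Z_1$ of \Cref{eq:Z-reduced} on each branch, with $\mathcal Z(R_-)=0$ on $(R_-,R_s)$ and $\mathcal Z(R_+)=0$ on $(R_s,R_+)$. Next I would check, as above, that $\alpha^{-1}\mathcal Z$ is an admissible solution of the $\alpha$-problem. Finally, the uniqueness statement in \Cref{lem:Z-solution-formula} says the solution of \Cref{eq:Z-alpha} on a branch with vanishing outer endpoint value is unique. It therefore forces $Z_\alpha=\alpha^{-1}\mathcal Z$ branchwise.

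As a cross-check, I would also verify the identity directly from the explicit formulas \Cref{eq:Zminus-formula,eq:Zplus-formula}. The integrating factors $\mu_\pm$ are built only from $P$, so they are independent of $\alpha$. The factor $\alpha^{-1}$ in $Q_\alpha$ then pulls out of the integral, giving
\[
Z_{\alpha,-}(R)=\mu_-(R)^{-1}\int_{R_-}^{R}\mu_-(s)\,\alpha^{-1}Q_1(s)\,ds=\alpha^{-1}Z_{1,-}(R),
\]
and similarly on the right branch.

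There is no real obstacle here. The only point requiring care is that "canonical" is used branchwise and the endpoint condition is imposed only at the outer endpoint, not at $R_s$ where $Z$ blows up. This is exactly the setting in which the uniqueness of \Cref{lem:Z-solution-formula} was proved, so no sonic analysis is needed.
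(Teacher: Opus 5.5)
Your proposal is correct and matches the paper's proof: $P$ is $\alpha$-independent and $Q_\alpha=\alpha^{-1}Q_1$, so $\alpha^{-1}\mathcal Z$ solves \Cref{eq:Z-alpha} with the same outer endpoint condition, and the uniqueness part of \Cref{lem:Z-solution-formula} then gives $Z_\alpha=\alpha^{-1}\mathcal Z$ branchwise. Your extra check, pulling $\alpha^{-1}$ out of the explicit integrating-factor formulas \Cref{eq:Zminus-formula,eq:Zplus-formula}, is a valid and slightly more concrete confirmation of the same fact.
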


\begin{proof}
Substituting the ansatz $Z_\alpha=\alpha^{-1}\mathcal Z$ into
\Cref{eq:Z-alpha} gives \Cref{eq:Z-reduced}, and the corresponding outer endpoint condition is preserved. Uniqueness of the canonical one-sided solutions follows from \Cref{lem:Z-solution-formula}.
\end{proof}

%-------------------------------------------------------------

\subsection{Shock width scaling}

The above rescaling applies to both one-sided solutions and can be glued together to produce a global profile $R_\alpha$, noting that the sonic density $R_s$ is independent of $\alpha$. Particularly, this profile satisfies $\dot{R}_\alpha = \sqrt{Z_\alpha(R)}$, which combined with the previous result gives
\begin{equation}\label{eq:alpha-dR-dxi}
\dot R_\alpha(\xi)
=
\frac{1}{\sqrt{\alpha}}\sqrt{\mathcal Z(R_\alpha(\xi))}.
\end{equation}
Hence,
\begin{equation}\label{eq:alpha-dxi-dR}
\frac{d\xi}{dR_\alpha}
=
\sqrt{\alpha}\,
\frac{1}{\sqrt{\mathcal Z(R)}}.
\end{equation}

\begin{proposition}[Shock width scaling]
Let $R_\alpha: \mathbb{R} \rightarrow (R_-, R_+)$ denote the IGR shock profile with inverse $R_\alpha^{-1}: (R_-, R_+) \rightarrow \mathbb{R}$. For any
$\delta\in(0,(R_+-R_-)/2)$, define
\[
\xi_\alpha^-(\delta)=R_\alpha^{-1}(R_-+\delta),
\qquad
\xi_\alpha^+(\delta)=R_\alpha^{-1}(R_+-\delta).
\]

Then
\[
\xi_\alpha^+(\delta)-\xi_\alpha^-(\delta)
=
\sqrt{\alpha}\,C_\delta
\]
for a constant $C_\delta$ depending on $\delta$ but independent of $\alpha$. Thus, the shock width scales like $\sqrt{\alpha}$.
\end{proposition}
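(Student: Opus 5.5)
The plan is to write the width as an integral in density space using \Cref{eq:alpha-dxi-dR} and read off the $\sqrt{\alpha}$ factor. Since $R_\alpha$ is strictly increasing and continuous by \Cref{thm:global-glue}, its inverse is well defined. The points $R_-+\delta<R_s<R_+-\delta$ or $R_-+\delta<R_+-\delta$ both lie strictly inside $(R_-,R_+)$; this holds because $\delta<(R_+-R_-)/2$. Hence $\xi_\alpha^\pm(\delta)$ are finite real numbers. Their difference does not depend on the translation normalization, since uniqueness modulo translation (\Cref{thm:unique-translation}) shifts both points equally. So the quantity is well defined.

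Next, I would split at the sonic crossing $\xi_s$ (equivalently at $R_s$, which is $\alpha$-independent). The case where $R_s$ lies outside $[R_-+\delta,R_+-\delta]$ is simpler: there is no split, only one branch is involved, and the same argument applies. On each branch $R_\alpha$ is $C^2$ with $\dot R_\alpha>0$, and \Cref{eq:alpha-dR-dxi} holds. The change-of-variables formula for a $C^1$ strictly monotone map then gives
\[
\xi_\alpha^+(\delta)-\xi_\alpha^-(\delta)
=\int_{R_-+\delta}^{R_s}\frac{\sqrt{\alpha}\,dR}{\sqrt{\mathcal Z_-(R)}}
+\int_{R_s}^{R_+-\delta}\frac{\sqrt{\alpha}\,dR}{\sqrt{\mathcal Z_+(R)}} .
\]
Here $\mathcal Z_\pm=Z_{1,\pm}$ are the $\alpha$-independent canonical one-sided solutions from the preceding proposition. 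Each one-sided integral is first taken up to $R_s\mp\epsilon$, where the branch is classical. Then one lets $\epsilon\to0$, using continuity of $R_\alpha$ at $\xi_s$ to identify the limit with $\xi_s-\xi_\alpha^-(\delta)$ and $\xi_\alpha^+(\delta)-\xi_s$. This yields the formula with
\[
C_\delta:=\int_{R_-+\delta}^{R_s}\mathcal Z_-^{-1/2}\,dR+\int_{R_s}^{R_+-\delta}\mathcal Z_+^{-1/2}\,dR .
\]

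The main point to check is that $C_\delta$ is finite and positive. Positivity follows from $\mathcal Z_\pm>0$ (\Cref{lem:Z-solution-formula}). For finiteness, the lower endpoints stay a distance $\delta$ away from $R_\pm$, so the logarithmic divergence in \Cref{lem:Z-endpoints} is avoided, and $\mathcal Z_\pm$ is continuous and positive on the compact pieces away from $R_s$. Near $R_s$, integrability of $\mathcal Z_\pm^{-1/2}$ is exactly \Cref{rem:sonic-integrability}, applied with $\alpha=1$. Finally, $C_\delta$ depends only on $\delta$, $p$, and the fluxes, and not on $\alpha$; this gives the $\sqrt{\alpha}$ scaling of the width.
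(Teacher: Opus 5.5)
Your proposal is correct and follows the paper's proof. Both integrate $d\xi/dR_\alpha=\sqrt{\alpha}/\sqrt{\mathcal Z(R)}$ from $R_-+\delta$ to $R_+-\delta$ and note that the resulting integral is finite and independent of $\alpha$; your splitting at $R_s$ and your use of \Cref{rem:sonic-integrability} for finiteness near the sonic point simply spell out what the paper's one-line argument leaves implicit.
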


\begin{proof}
Integrating the previous relation \Cref{eq:alpha-dxi-dR} gives
\[
\xi_\alpha^+(\delta)-\xi_\alpha^-(\delta)
=
\sqrt{\alpha}
\int_{R_-+\delta}^{R_+-\delta}
\frac{dR}{\sqrt{\mathcal Z(R)}}.
\]
Since $\delta > 0$, the integral is finite (c.f. \Cref{sec:one-sided-reconstruct}) and depends only on $\delta$.
\end{proof}

Another way to view that the shock width scales like $\sqrt{\alpha}$ is that the profile $R_\alpha$ can be expressed as a self-similar $(1/\sqrt{\alpha})$-scaling of an $\alpha$-independent profile $\mathcal{R}$. To see this, let $\xi_s^\alpha$ denote the sonic crossing point where $R_\alpha(\xi_s^\alpha)=R_s$. Define the rescaled coordinate
$\eta:=(\xi-\xi_s^\alpha)/\sqrt{\alpha}$.
Then, we have the following self-similar rescaling.

\begin{proposition}[Self--similar profile scaling]
There exists a profile $\mathcal R$ independent of $\alpha$
such that
\begin{equation}\label{eq:alpha-independent-R}
    R_\alpha(\xi)
    =
    \mathcal R\!\left(
    \frac{\xi-\xi_s^\alpha}{\sqrt{\alpha}}
    \right).
\end{equation}

\end{proposition}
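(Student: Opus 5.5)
The plan is to define $\mathcal R$ as the $\alpha=1$ profile, normalized so that it reaches the sonic density at the origin. Then I show that the rescaled function $\xi\mapsto\mathcal R((\xi-\xi_s^\alpha)/\sqrt{\alpha})$ is also an admissible profile for parameter $\alpha$, and conclude with the uniqueness result \Cref{thm:unique-translation}.

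\textbf{Construction of $\mathcal R$.} Take $\mathcal R:=R_1$, the global profile of \Cref{thm:global-glue} with $\alpha=1$. This profile uses the canonical branches $\mathcal Z=Z_1$ from the preceding proposition. By \Cref{thm:unique-translation} it is uniquely fixed by the phase condition $\mathcal R(0)=R_s$. It is independent of $\alpha$ because $P$, $\mathcal S$ and $R_s$ do not involve $\alpha$.

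\textbf{Matching with the $\alpha$-profile.} Set $\widetilde R(\xi):=\mathcal R(\eta)$ with $\eta=(\xi-\xi_s^\alpha)/\sqrt\alpha$. Then $\widetilde R$ is continuous and strictly increasing, with $\widetilde R(\xi_s^\alpha)=R_s$. It is $C^2$ away from $\xi_s^\alpha$ and has the correct limits $R_\pm$ at $\pm\infty$. On each nonsonic branch the chain rule gives
\[
\dot{\widetilde R}(\xi)=\alpha^{-1/2}\,\frac{d\mathcal R}{d\eta}(\eta)=\alpha^{-1/2}\sqrt{\mathcal Z(\widetilde R(\xi))}=\sqrt{Z_\alpha(\widetilde R(\xi))},
\]
where the last equality uses $Z_\alpha=\alpha^{-1}\mathcal Z$ from the previous proposition. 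Hence $\widetilde R$ satisfies properties (i)--(ii) of \Cref{thm:global-glue} for parameter $\alpha$.

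\textbf{Weak equation and conclusion.} By \Cref{lem:no-defect}, $\widetilde R$ is a weak profile for parameter $\alpha$. A direct check confirms this: under $\xi\mapsto\eta$, the weak form \Cref{eq:weak-profile} with $\alpha$ transforms into the one with $\alpha=1$, since the terms $\alpha\dot R\dot\varphi$ and $\alpha\dot R^2$ each pick up a factor $\alpha^{-1}$. The tail limits (iii) follow from \Cref{lem:tail-flattening}. Therefore $\widetilde R$ and $R_\alpha$ are both global profiles in the sense of \Cref{thm:global-glue}, and both cross $R_s$ at $\xi_s^\alpha$. \Cref{thm:unique-translation} then gives a shift $\xi_0$ with $R_\alpha(\xi)=\widetilde R(\xi-\xi_0)$. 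Evaluating at the sonic crossing forces $\xi_0=0$, so $R_\alpha\equiv\widetilde R$, which is \eqref{eq:alpha-independent-R}.

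\textbf{Main obstacle.} The only real subtlety is confirming that the sign choice $\dot R=+\sqrt{Z}$ and the branchwise endpoint conditions transfer correctly. The factor $\sqrt\alpha>0$ preserves monotonicity, so this is routine. Everything else reduces to the chain rule and the uniqueness theorem.
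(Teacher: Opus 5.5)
Your proof is correct and rests on the same ingredients as the paper's: the chain rule under $\eta=(\xi-\xi_s^\alpha)/\sqrt{\alpha}$, the scaling $Z_\alpha=\alpha^{-1}\mathcal Z$, and uniqueness of a monotone profile once its slope law and sonic normalization are fixed. The paper runs the argument in the other direction: it pulls $R_\alpha$ back to $\eta$, observes that $d\mathcal R/d\eta=\sqrt{\mathcal Z(\mathcal R)}$ contains no $\alpha$, and leaves the uniqueness step implicit. You instead push $R_1$ forward and invoke \Cref{thm:unique-translation} explicitly, which is slightly more careful because $\sqrt{\mathcal Z}$ is singular at $R_s$, so standard ODE uniqueness does not apply there directly.
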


\begin{proof}
Note that \Cref{eq:alpha-independent-R} defines $\mathcal{R}(\eta)$ to be $R_\alpha(\xi)$; thus, we just need to see that $\mathcal{R}$ is independent of $\alpha$. Combining \Cref{eq:alpha-independent-R,eq:alpha-dR-dxi}, and the chain rule gives
$ d\mathcal{R}/d\eta=\sqrt{\mathcal Z(\mathcal{R})}$,
which determines $\mathcal R$ independently of $\alpha$.
\end{proof}

%-------------------------------------------------------------
\subsection{Euler shock limit}

We now show that the IGR traveling-waves converge to the classical Euler shock as $\alpha\to0^+$. First, we establish a locally uniform pointwise limit, away from the sonic crossing. For the remainder, to remove the translational degree of freedom, we will assume that the sonic crossing occurs at the origin.

\begin{theorem}[Pointwise Euler shock limit away from the sonic crossing]
Let $(R_\alpha,U_\alpha,e_\alpha,S_\alpha)$ denote the IGR traveling-wave. Then, 
\[
R_\alpha(\xi)\to
\begin{cases}
R_- & \xi<0,\\
R_+ & \xi>0,
\end{cases}
\]
as $\alpha\to0^+$. The convergence holds locally uniformly on
$(-\infty,0) \cup (0,\infty)$ with exponential rate. Analogous statements hold for $U_\alpha$, $e_\alpha$ and $S_\alpha$.
\end{theorem}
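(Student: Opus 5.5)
The plan is to use the self-similar representation \eqref{eq:alpha-independent-R}. With the sonic crossing normalized to $\xi_s^\alpha=0$, we have $R_\alpha(\xi)=\mathcal R(\xi/\sqrt{\alpha})$, where $\mathcal R$ is the $\alpha$-independent profile solving $d\mathcal R/d\eta=\sqrt{\mathcal Z(\mathcal R)}$. This is exactly the glued profile of \Cref{thm:global-glue} with $\alpha=1$, normalized so that $\mathcal R(0)=R_s$. The convergence question therefore reduces to the behavior of a single fixed profile $\mathcal R(\eta)$ as $\eta\to\pm\infty$.

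First, fix a compact set $K\subset(0,\infty)$ with $\min K=\xi_1>0$. For $\xi\in K$ we have $\xi/\sqrt\alpha\ge \xi_1/\sqrt\alpha$, and $\mathcal R$ is increasing. Hence
\[
0<R_+-R_\alpha(\xi)\le R_+-\mathcal R(\xi_1/\sqrt\alpha).
\]
Next apply \Cref{lem:tail-decay} to the $\alpha=1$ profile $\mathcal R$. For any $\lambda\in(0,1)$ there are $C,\eta_0$ with $R_+-\mathcal R(\eta)\le Ce^{-\lambda\eta}$ for $\eta\ge\eta_0$. This gives
\[
\sup_{\xi\in K}|R_\alpha(\xi)-R_+|\le C e^{-\lambda\xi_1/\sqrt\alpha}
\]
once $\sqrt\alpha\le \xi_1/\eta_0$. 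This is the uniform convergence with exponential rate in $1/\sqrt\alpha$. The case $K\subset(-\infty,0)$ is symmetric, using the left-tail bound of \Cref{lem:tail-decay} and monotonicity with $\max K<0$.

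For the remaining fields, use the reconstruction maps from the proof of \Cref{thm:full-solution}: $U_\alpha=F_U(R_\alpha)$, $e_\alpha=F_e(R_\alpha)$, $S_\alpha=\mathcal S(R_\alpha)$. None of these maps depends on $\alpha$, because $c,m,q,k$ and $p$ are fixed by the end states via \eqref{eq:wavespeed-and-fluxes}. Each map is Lipschitz on $[R_-,R_+]$, so for instance $|U_\alpha-U_\pm|\le \|F_U'\|_\infty |R_\alpha-R_\pm|$. The same exponential bounds therefore transfer, with limits $U_\pm$, $e(R_\pm)$, and $\mathcal S(R_\pm)=0$.

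The only point requiring care is the claim that $\mathcal R$ is genuinely $\alpha$-independent, including the choice of translation. This is handled by the phase normalization $\mathcal R(0)=R_s$ together with \Cref{thm:unique-translation}. The exponential rate also needs the tail lemma for the fixed profile rather than for $R_\alpha$. Applying it to $R_\alpha$ directly would produce constants $C,\xi_0$ that depend on $\alpha$, so the bound must be taken on $\mathcal R$ and then transported through the scaling.
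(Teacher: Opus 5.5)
Your proof is correct and follows the paper's route. The paper uses the $\sqrt{\alpha}$ scaling to place every fixed $\xi\neq0$ in the exponential tail region, applies \Cref{lem:tail-decay}, and then transfers the bound to $U_\alpha$, $e_\alpha$, $S_\alpha$ through the $\alpha$-independent reconstruction relations. Your version is more explicit than the paper's sketch on two points: you apply the tail lemma to the fixed profile $\mathcal R$, so the constants do not depend on $\alpha$, and you use monotonicity to get uniformity on sets bounded away from the origin.
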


\begin{proof}
Since the shock width scales like $\sqrt{\alpha}$ about the origin, for fixed $\xi\neq0$ the solution lies in the exponential tail region for sufficiently small $\alpha$. \Cref{lem:tail-decay} then gives the claimed locally uniform convergence of $R_\alpha$ to the asymptotic states at an exponential rate. Similarly, the locally uniform convergence of $U_\alpha,e_\alpha,S_\alpha$ follows from the relations in \Cref{eq:U-from-R,eq:S-as-function-of-R,eq:e-explicit}.
\end{proof}

\begin{remark}
Note that, since the asymptotic states satisfy the Rankine--Hugoniot relations and the Lax transonic inequalities, the limiting discontinuity coincides with the entropy Euler shock connecting the left and right states. 
\end{remark}

Now, we establish convergence of the IGR shock solution to the Euler shock limit in $L^\mu(\mathbb{R})$, $\mu \in [1,\infty)$. Let $R_0$ denote the above pointwise limit, i.e., $R_0$ equals $R_-$ for $\xi < 0$ and $R_+$ for $\xi > 0$. 

\begin{theorem}[Convergence to the Euler shock in $L^\mu(\mathbb{R})$]\label{thm:euler-shock-Lmu}
    Let $\mu\in[1,\infty)$. Then, the $\alpha$-IGR shock solution converges to $R_0$ in $L^\mu(\mathbb{R})$ at rate $\alpha^{1/(2\mu)}$, i.e.,
    \begin{equation}\label{eq:Lp-convergence-to-Euler}
        \|R_\alpha - R_0\|_{L^\mu(\mathbb{R})} \leq  C \alpha^{1/(2\mu)},
    \end{equation}
    where $C$ is independent of $\alpha$. An analogous statement holds for $U_\alpha,e_\alpha,S_\alpha$. 
\end{theorem}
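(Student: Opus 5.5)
The plan is to use the self-similar scaling \eqref{eq:alpha-independent-R} with the sonic crossing normalized to $\xi_s^\alpha=0$, so that $R_\alpha(\xi)=\mathcal R(\xi/\sqrt{\alpha})$, where $\mathcal R$ is the $\alpha$-independent profile (equivalently, the $\alpha=1$ profile). Since $R_0(\xi)=R_0(\xi/\sqrt\alpha)$ (the step function is scale invariant), the change of variables $\eta=\xi/\sqrt{\alpha}$ gives
\[
\|R_\alpha-R_0\|_{L^\mu(\mathbb R)}^\mu=\int_{\mathbb R}\bigl|\mathcal R(\xi/\sqrt\alpha)-R_0(\xi/\sqrt\alpha)\bigr|^\mu\,d\xi=\sqrt{\alpha}\,\|\mathcal R-R_0\|_{L^\mu(\mathbb R)}^\mu .
\]
Taking $\mu$-th roots yields \eqref{eq:Lp-convergence-to-Euler} with $C:=\|\mathcal R-R_0\|_{L^\mu(\mathbb R)}$. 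This constant is manifestly independent of $\alpha$.

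It therefore remains to show that $C<\infty$, i.e.\ $\mathcal R-R_0\in L^\mu(\mathbb R)$. For this I would apply \Cref{lem:tail-decay} to the $\alpha=1$ profile. It gives $|\mathcal R(\eta)-R_+|\le Ce^{-\lambda\eta}$ for $\eta\ge\eta_0$ and $|\mathcal R(\eta)-R_-|\le Ce^{\lambda\eta}$ for $\eta\le-\eta_0$, with any $\lambda\in(0,1)$. On the compact set $[-\eta_0,\eta_0]$ the integrand is bounded by $(R_+-R_-)^\mu$, because $\mathcal R$ takes values in $(R_-,R_+)$. Summing the two exponential tails and the bounded middle piece gives finiteness for every $\mu\in[1,\infty)$. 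Alternatively, one can cite \Cref{cor:global-profile-regularity}(b) with $R_{\mathrm{int}}$ and note that $R_0-R_{\mathrm{int}}$ is bounded and compactly supported. The only point needing care is that \Cref{lem:tail-decay} is stated for a fixed $\alpha$, so I apply it once, at $\alpha=1$, and not uniformly in $\alpha$; the scaling then removes any need for uniformity.

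For $U_\alpha,e_\alpha,S_\alpha$ I would argue as in \Cref{thm:full-solution}. Each field is $F(R_\alpha)$ with $F\in\{F_U,F_e,F_S\}$ Lipschitz on $[R_-,R_+]$, and the limit fields are $F(R_0)$, since $F(R_\pm)$ are the end states (with $\mathcal S(R_\pm)=0$). Hence $|F(R_\alpha)-F(R_0)|\le\|F'\|_{L^\infty([R_-,R_+])}|R_\alpha-R_0|$ pointwise. The same rate follows, with $C$ multiplied by $\|F'\|_\infty$, which is independent of $\alpha$ because $F$ depends only on the fluxes $(c,m,q,k)$ and not on $\alpha$.

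I expect no serious obstacle; the main subtlety is justifying the exact scaling identity. This requires that the normalized profile is unique (\Cref{thm:unique-translation}), so that $\mathcal R$ is well defined and genuinely $\alpha$-independent, and that $R_s$ and the fluxes do not depend on $\alpha$.
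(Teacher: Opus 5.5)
Your proposal is correct and follows the paper's route: the paper's proof is the same change of variables $\eta=\xi/\sqrt{\alpha}$ in the self-similar representation $R_\alpha(\xi)=\mathcal R(\xi/\sqrt{\alpha})$, which gives $C=\|\mathcal R-R_0\|_{L^\mu(\mathbb R)}$. You also fill in two details the paper leaves implicit: finiteness of this constant, via the exponential tails of the $\alpha=1$ profile, and the Lipschitz transfer of the estimate to $U_\alpha$, $e_\alpha$, $S_\alpha$ with $\alpha$-independent constants.
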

\begin{proof}
    This follows from a direct calculation using the self-similar profile \Cref{eq:alpha-independent-R} and change of variables $\xi \rightarrow \eta$.
\end{proof}

\begin{remark}
    See \Cref{fig:igr_tw_alpha_convergence} for a numerical verification of the convergence rate of the IGR shock profile to the Euler shock in $L^1$ and $L^2$. 
\end{remark}

 \begin{figure}[h]
     \centering
     \IfFileExists{figures/data/igr_tw_alpha_convergence.csv}{%
  \def\igralphacsv{figures/data/igr_tw_alpha_convergence.csv}%
}{%
  \def\igralphacsv{data/igr_tw_alpha_convergence.csv}%
}
\def\igrrefshift{0.85}

\begin{tikzpicture}
\begin{axis}[
    width=0.72\textwidth,
    height=0.48\textwidth,
    xmode=log,
    ymode=log,
    grid=both,
    xlabel={$\alpha$},
    ylabel={Error},
    legend style={font=\normalsize, at={(0.97,0.03)}, anchor=south east},
    tick label style={font=\normalsize},
    label style={font=\normalsize},
    title style={font=\normalsize},
]

\addplot[
    blue,
    line width=1.1pt,
    mark=*,
    mark size=1.8pt,
] table [x=alpha, y=L1_error, col sep=comma] {\igralphacsv};

\addplot[
    blue,
    dashed,
    line width=1pt,
] table [x=alpha, y expr=\igrrefshift*\thisrow{L1_ref}, col sep=comma] {\igralphacsv};

\addplot[
    red!80!black,
    line width=1.1pt,
    mark=square*,
    mark size=1.8pt,
] table [x=alpha, y=L2_error, col sep=comma] {\igralphacsv};

\addplot[
    red!80!black,
    dashed,
    line width=1pt,
] table [x=alpha, y expr=\igrrefshift*\thisrow{L2_ref}, col sep=comma] {\igralphacsv};

\legend{$L^1$ error, $\alpha^{1/2}$ reference, $L^2$ error, $\alpha^{1/4}$ reference}

\end{axis}
\end{tikzpicture}
     \caption{Numerical convergence of IGR traveling-wave profiles to the Euler shock profile as $\alpha \to 0^+$.}
     \label{fig:igr_tw_alpha_convergence}
 \end{figure}

As a corollary, $L^\mu(\mathbb{R})$ convergence of the solution implies convergence of the derivative in a suitable negative Sobolev norm: for $\mu\in[1,\infty)$, denote by $W^{-1,\mu}(\mathbb{R})$ the topological dual of the closure $W^{1,\mu'}_0(\mathbb{R})$ of $C^\infty_c(\mathbb{R})$ in $W^{1,\mu'}(\mathbb{R})$, where $\mu' = \mu/(\mu-1)$ (where $\mu=1$ is understood to correspond to $\mu'=\infty$) equipped with the operator norm
\begin{equation}
    \| \chi \|_{W^{-1,\mu}(\mathbb{R})} := \sup_{\substack{\phi \in W^{1,\mu'}_0(\mathbb{R}), \\ \|\phi\|_{W^{1,\mu'}(\mathbb{R})}=1}} |\langle \chi, \phi\rangle|,
\end{equation}
where $\langle\cdot,\cdot\rangle: W^{-1,\mu}(\mathbb{R}) \times W^{1,\mu'}_0(\mathbb{R}) \rightarrow \mathbb{R}$ denotes the duality pairing. 

Let $\delta_0$ denote the delta distribution centered at the origin. Note that $\delta_0 \in W^{-1,\mu}(\mathbb{R})$ for $\mu \in [1,\infty)$ since by Sobolev embedding, $W^{1,\mu'}(\mathbb{R}) \hookrightarrow C^0(\mathbb{R})$ for any $\mu' \in (1,\infty]$, which yields continuous pointwise evaluation. In particular, the distributional derivative $(R_+-R_-)\delta_0$ of the Euler shock $R_0$ is in $W^{-1,\mu}(\mathbb{R})$.

\begin{corollary}[Strong convergence of the derivative in $W^{-1,\mu}(\mathbb{R})$]
    Let $\mu \in [1,\infty)$. Then, $\{\dot{R}_\alpha d\xi\}_{\alpha > 0}$ converges to $(R_+ - R_-)\delta_0$ in $W^{-1,\mu}(\mathbb{R})$ as $\alpha \rightarrow 0^+$ with rate $\alpha^{1/(2\mu)}$. In particular,
    \begin{equation}
        \| \dot{R}_\alpha d\xi - (R_+ - R_-)\delta_0 \|_{W^{-1,\mu}(\mathbb{R})} \leq C \alpha^{1/(2\mu)},
    \end{equation}
    where $C$ is the same constant as in \Cref{thm:euler-shock-Lmu}. Analogous statements hold for $\dot{U}_\alpha d\xi$, $\dot{e}_\alpha d\xi$, $\dot{S}_\alpha d\xi$.
    \begin{proof} This immediately follows from a density argument using H\"{o}lder's inequality and \Cref{thm:euler-shock-Lmu}.
    \end{proof}
\end{corollary}

    Note that for the finite $\mu$ considered, $W^{1,\mu'}_0(\mathbb{R})$ is a strict subset of $W^{1,\mu'}(\mathbb{R})$ only for the endpoint $\mu'=\infty$ (i.e., $\mu = 1$), due to loss of control of mass at infinity. We can instead establish convergence of the derivative in a space smaller than $W^{-1,1}(\mathbb{R})$ at the cost of trading strong convergence for weak$^*$ convergence, which is the next result.

\begin{theorem}[Weak convergence of the derivative in the sense of measures]\label{thm:weak-convergence-measure}
    The sequence of Radon measures $\{\dot{R}_\alpha d\xi\}_{\alpha > 0}$ converges to $(R_+-R_-)\delta_0$ as $\alpha\rightarrow 0^+$ weakly in the sense of measures, i.e., weak$^*$ convergence on the topological dual of $C_0(\mathbb{R})$. Analogous statements hold for $\dot{U}_\alpha d\xi$, $\dot{e}_\alpha d\xi$, and $\dot{S}_\alpha d\xi$.
    \begin{proof}
        We will use a density argument. First, consider any $\phi \in C^1_c(\mathbb{R})$; then,
        \begin{align*}
            \left|\int_{\mathbb{R}} \phi(\xi)\dot{R}_\alpha(\xi)d\xi - \int_{\mathbb{R}} \phi(\xi) (R_+-R_-)\delta_0\right| &= \left|-\int_{\mathbb{R}} \dot{\phi}(\xi) (R_\alpha(\xi) - R_0(\xi)) d\xi\right| \\
            &\leq \|\dot{\phi}\|_\infty \|R_\alpha - R_0\|_{L^1(\mathbb{R})} \leq C \sqrt{\alpha} \|\dot{\phi}\|_\infty,
        \end{align*}
        where we used \Cref{thm:euler-shock-Lmu}. For $\psi \in C_0(\mathbb{R})$, the remainder of the proof follows by a standard density argument, taking a sequence $\{\phi_n \in C^1_c(\mathbb{R})\}_{n=1}^\infty$ approximating $\psi$ in supremum norm.
    \end{proof}
\end{theorem}

\subsection{Continuous dependence on parameters} To conclude this section, we briefly note that the various quantities derived throughout depend continuously on the IGR shock parameters $\theta = (\alpha, R_-, R_+)$. To see this, view $\mathcal{S}$ as explicitly depending on these parameters, $\mathcal{S} = \mathcal{S}(R; \theta)$. Recall that the sonic density $R_s$ defined by $\frac{\partial\mathcal{S}}{\partial R}(R; \theta) = 0$ furthermore satisfies $\frac{\partial^2\mathcal{S}}{\partial R^2}(R; \theta) \neq 0$. By the implicit function theorem, we have that $R_s$ has local $C^1$ dependence on $\theta$. Since the standing assumptions define an open set of admissible parameters, we have that $R_s$ has $C^1$ dependence on $\theta$ (note that $R_s$ is independent of $\alpha$ as discussed above, so this can be simplified to $C^1$ dependence on $R_-$ and $R_+$). This then yields continuous dependence on $\theta$ for various quantities derived in the paper, for example the $\alpha$-independent profile $\mathcal{R}$, as well as $a_s$ defined in \Cref{eq:a-def}. Hence, the H\"{o}lder and Sobolev exponents depend continuously on $\theta$, as well as various constants appearing in bounds throughout, e.g., \Cref{eq:Lp-convergence-to-Euler}.

%============================================================================================
\section{Conclusion}

In this paper, we established existence, uniqueness modulo translation, and regularity of compressive transonic shock profiles in one dimension for the information geometric regularization of the compressible Euler equations with a general thermodynamic equation of state. The profiles are monotone heteroclinic connections between saddle equilibria and exhibit a degeneracy at the transition from a supersonic to a subsonic state. Despite the loss of classical regularity at the sonic point, we proved H\"older and Sobolev regularity and showed, in particular, that the global weak solution contains no singular measure defect. We also analyzed the small-regularization asymptotics and proved convergence to the entropy-admissible Euler shock as the regularization parameter tends to zero.
This work considers IGR shock profiles as traveling-wave solutions of the one-dimensional IGR equations, but does not address whether these profiles emerge dynamically from general initial data, whether they are stable under perturbations, or, if stable, the rate at which solutions approach the traveling profile. Prior work suggests that the divergence of the derivative at the sonic point may occur only asymptotically in time for smooth initial profiles \cite{cao2024information}. Moreover, one-dimensional IGR simulations exhibit profiles qualitatively similar to those obtained from the IGR shock equation in \Cref{fig:igr_tw_profile}. Future work will investigate the dynamical stability of IGR shock profiles both theoretically and numerically.

\section*{Acknowledgments}
Los Alamos National Laboratory Report LA-UR-26-25301.

\section*{Data availability statement}
Data generated in this study are available from the corresponding author upon reasonable request.

\section*{Declarations}
The authors declare no competing interests.

OpenAI's ChatGPT was used for language editing and assistance with manuscript preparation. The authors assume responsibility for all content.

\bibliographystyle{siamplain}
\bibliography{tw_igr_paper.bib}

%============================================================================================
\appendix
\crefalias{section}{appendix}

%============================================================================================
\section{Convexity of the reduced pressure from the physical EOS} \label{appendix:barotropic_eos}
\renewcommand{\theequation}{A.\arabic{equation}}
\renewcommand{\theHequation}{A.\arabic{equation}}
\setcounter{equation}{0}

This appendix records convenient criteria ensuring convexity of the reduced pressure
\[
p(R):=\tilde{p}\big(R,e(R)\big)
\]
along the traveling-wave (TW) internal-energy curve. Under the standing assumption $m\neq 0$,
the explicit formula \Cref{eq:e-explicit} shows that $e=e(R)$ is smooth for $R>0$, so the reduced
pressure $p$ inherits the regularity of the physical EOS pressure $\tilde p$. Convexity of $p$ is the
structural hypothesis used in Sections~2--4. The examples below show how this criterion applies to
ideal and stiffened gases and to cold $+$ thermal EOS.

In this appendix, $\tilde{p}(\rho,e)$ denotes the physical EOS pressure, while $p(R)$ denotes the reduced pressure along the TW curve.

%--------------------------------------------------------------------------------------------
\subsection{Traveling-wave energy curve and reduced pressure}

From Section~2, the traveling-wave reduction determines the internal-energy curve
$e=e(R)$ by \Cref{eq:e-explicit} and defines the reduced pressure by
\Cref{eq:reduced-pressure}. In this appendix we use these formulas to compute
$p''$ and derive convenient convexity criteria.

For later use we record the derivatives of $e(R)$:
\begin{equation}\label{eq:app-e-derivatives}
    e'(R)= -\frac{A}{R^2}-\frac{m^2}{R^3},
    \qquad
    e''(R)=\frac{2A}{R^3}+\frac{3m^2}{R^4}.
\end{equation}
Combining these gives the TW identity
\begin{equation}\label{eq:app-TW-identity}
    e''(R)+\frac{2}{R}e'(R)=\frac{m^2}{R^4}>0.
\end{equation}

%--------------------------------------------------------------------------------------------
\subsection{Second-derivative formula for the reduced pressure}

Differentiating \Cref{eq:reduced-pressure} and using the chain rule yields $p'(R)=\tilde{p}_\rho+\tilde{p}_e\,e'(R)$, and
\begin{equation}\label{eq:app-ptilde-second-basic}
    p''(R)=\tilde{p}_{\rho\rho}+2\tilde{p}_{\rho e}\,e'(R)+\tilde{p}_{ee}\,\big(e'(R)\big)^2+\tilde{p}_e\,e''(R),
\end{equation}
where all partial derivatives of $\tilde{p}$ are evaluated at $(\rho,e)=(R,e(R))$. Using \Cref{eq:app-TW-identity} to eliminate $e''$ from \Cref{eq:app-ptilde-second-basic} gives the equivalent representation
\begin{equation}\label{eq:app-ptilde-second-TW}
    p''(R)
    =
    \Big(\tilde{p}_{\rho\rho}+\frac{m^2}{R^4}\tilde{p}_e\Big)
    +2\Big(\tilde{p}_{\rho e}-\frac{1}{R}\tilde{p}_e\Big)e'(R)
    +\tilde{p}_{ee}\,\big(e'(R)\big)^2,
    \qquad (\rho,e)=(R,e(R)).
\end{equation}
Thus $p''(R)$ is a quadratic polynomial in $e'(R)$ with coefficients determined by the EOS and the flux $m$.

It is convenient to introduce the abbreviations
\begin{equation}\label{eq:app-B-delta}
    B(R):=\tilde{p}_{\rho\rho}+\frac{m^2}{R^4}\tilde{p}_e,
    \qquad
    \delta(R):=\tilde{p}_{\rho e}-\frac{1}{R}\tilde{p}_e,
    \qquad (\rho,e)=(R,e(R)),
\end{equation}
so that \Cref{eq:app-ptilde-second-TW} becomes
\begin{equation}\label{eq:app-ptilde-second-quadratic}
    p''(R)=B(R)+2\delta(R)e'(R)+\tilde{p}_{ee}\,\big(e'(R)\big)^2.
\end{equation}
Equivalently, defining the TW-modified Hessian
\begin{equation}\label{eq:app-Hcal}
    \mathcal{H}(R,e):=
    \begin{pmatrix}
        \tilde{p}_{\rho\rho}+\dfrac{m^2}{R^4}\tilde{p}_e & \tilde{p}_{\rho e}-\dfrac{1}{R}\tilde{p}_e\\[6pt]
        \tilde{p}_{\rho e}-\dfrac{1}{R}\tilde{p}_e & \tilde{p}_{ee}
    \end{pmatrix},
\end{equation}
we may write
\begin{equation}\label{eq:app-ptilde-second-matrix}
    p''(R)=
    \begin{pmatrix}1\\ e'(R)\end{pmatrix}^{\!T}
    \mathcal{H}\big(R,e(R)\big)
    \begin{pmatrix}1\\ e'(R)\end{pmatrix}.
\end{equation}

%--------------------------------------------------------------------------------------------
\subsection{A sufficient convexity criterion}

A simple sufficient condition for convexity of the reduced pressure on an interval of densities is positivity of the quadratic form in \Cref{eq:app-ptilde-second-matrix}.

\begin{proposition}[Sufficient condition for convexity]
\label{prop:app-convexity}
Assume that along the TW curve $(R,e(R))$ one has
\begin{equation}\label{eq:app-Hcal-psd}
    \mathcal{H}\big(R,e(R)\big)\succeq 0
\end{equation}
for all $R$ in the density range traversed by the profile. Then $p''(R)\ge 0$ on that interval,
i.e.\ $p$ is convex there.
\end{proposition}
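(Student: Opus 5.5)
The claim follows directly from the quadratic-form representation \Cref{eq:app-ptilde-second-matrix}; no estimates are needed. Fix any $R$ in the density range traversed by the profile, and set $v:=(1,e'(R))^T\in\mathbb{R}^2$. The formula for $e'$ in \Cref{eq:app-e-derivatives} shows this vector is well defined, since $m\neq0$ and $R>0$.

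By \Cref{eq:app-ptilde-second-matrix},
\[
p''(R)=v^T\mathcal{H}\big(R,e(R)\big)v .
\]
Hypothesis \Cref{eq:app-Hcal-psd} says the symmetric matrix $\mathcal{H}(R,e(R))$ is positive semidefinite. Therefore $v^T\mathcal{H}v\ge0$ for every $v$, and in particular for our choice. This gives $p''(R)\ge0$ at each $R$ in the interval.

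Convexity of $p$ then follows by the standard criterion. Since $p\in C^2$ on the interval, a nonnegative second derivative means $p'$ is nondecreasing, hence $p$ is convex there.

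The only step that needs care is the validity of \Cref{eq:app-ptilde-second-matrix}, i.e., checking the derivation before using it.

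\begin{itemize}
\item The chain-rule formula \Cref{eq:app-ptilde-second-basic} requires $\tilde p$ to be $C^2$ near the curve $(R,e(R))$, which is assumed implicitly in this appendix.
\item Substituting $e''=\frac{m^2}{R^4}-\frac{2}{R}e'$ from \Cref{eq:app-TW-identity} into $\tilde p_e e''$ produces the term $\frac{m^2}{R^4}\tilde p_e$ in the $(1,1)$ entry and shifts the cross term by $-\frac{1}{R}\tilde p_e e'$, which is split symmetrically between the off-diagonal entries.
\end{itemize}

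Expanding the quadratic form then reproduces \Cref{eq:app-ptilde-second-quadratic}, and the argument above is complete.
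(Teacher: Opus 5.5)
Your argument is correct and takes the same approach as the paper, which gives no separate proof because the claim is immediate from the quadratic-form representation \eqref{eq:app-ptilde-second-matrix}: $p''(R)=v^T\mathcal{H}\big(R,e(R)\big)v\ge 0$ with $v=(1,e'(R))^T$. One small slip in your side check of that formula: substituting $e''=\frac{m^2}{R^4}-\frac{2}{R}e'$ shifts the cross term by $-\frac{2}{R}\tilde p_e e'$ (that is, by $-\frac{1}{R}\tilde p_e$ in each off-diagonal entry), not by $-\frac{1}{R}\tilde p_e e'$; this does not affect the conclusion.
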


Since $\mathcal{H}$ is a $2\times 2$ symmetric matrix, \Cref{eq:app-Hcal-psd} is equivalent to the scalar inequalities
\begin{equation}\label{eq:app-Hcal-psd-scalars}
    \tilde{p}_{ee}\ge 0,\qquad
    B(R)\ge 0,\qquad
    \delta(R)^2\le \tilde{p}_{ee}\,B(R),
    \qquad (\rho,e)=(R,e(R)),
\end{equation}
with $B$ and $\delta$ as in \Cref{eq:app-B-delta}. If $\tilde{p}_{ee}\equiv 0$ along the curve, then \Cref{eq:app-Hcal-psd-scalars} reduces to $\delta\equiv 0$ and $B(R)\ge 0$.

\begin{remark}[Interpretation of $\delta$]
\label{rem:app-delta}
Introducing the Gr\"uneisen parameter $\Gamma:=\tilde{p}_e/\rho$ \cite{gruneisen_param, riemann_fluid_flow}, one checks that
\begin{equation}\label{eq:app-delta-Gamma}
    \delta(R)=\tilde{p}_{\rho e}-\frac{1}{R}\tilde{p}_e
    = R\,\partial_\rho\Gamma(\rho,e)\big|_{e}\Big|_{\rho=R}.
\end{equation}
Thus $\delta$ measures the density dependence of the thermal coupling at fixed internal energy. In particular, $\delta\equiv 0$ corresponds to $\Gamma$ independent of $\rho$ (at fixed $e$).
\end{remark}

%--------------------------------------------------------------------------------------------
\subsection{Examples}

\paragraph{Ideal gas and stiffened gas}
For $\tilde{p}(\rho,e)=(\gamma-1)\rho e$ (and similarly $\tilde{p}=(\gamma-1)\rho e-\gamma p_\infty$), one has
$\tilde{p}_{ee}=0$, $\tilde{p}_{\rho e}=\gamma-1$, $\tilde{p}_e=(\gamma-1)\rho$. Hence, $\delta\equiv 0$ and
\[
B(R)=\frac{m^2}{R^4}\tilde{p}_e(R,e(R))=(\gamma-1)\frac{m^2}{R^3}>0.
\]
Therefore \Cref{eq:app-ptilde-second-quadratic} yields $p''(R)=(\gamma-1)\frac{m^2}{R^3}>0$, $R>0$,
so the reduced pressure is strictly convex along the TW curve despite $\mathrm{Hess}(\tilde{p})$ being indefinite.

\paragraph{Cold + thermal EOS}
Let $\tilde{p}(\rho,e)=p_0(\rho)+\rho\,\phi(e)$. Then $\tilde{p}_{\rho\rho}=p_0''(\rho)$, $\tilde{p}_{\rho e}=\phi'(e)$, $\tilde{p}_e=\rho\,\phi'(e)$, $\tilde{p}_{ee}=\rho\,\phi''(e)$,
and in particular $\delta\equiv 0$. Substituting into \Cref{eq:app-ptilde-second-TW} gives
\begin{equation}\label{eq:app-cold-thermal-ptilde-second}
    p''(R)
    =
    p_0''(R)
    +\phi'(e(R))\,\frac{m^2}{R^3}
    +R\,\phi''(e(R))\,\big(e'(R)\big)^2.
\end{equation}
Consequently, on any density interval traversed by the TW profile, $p$ is convex provided $p_0''(R)\ge 0$, $\phi'(e)\ge 0$, and $\phi''(e)\ge 0$, with strict convexity whenever $m\neq 0$ and $\phi'(e)>0$.

% %--------------------------------------------------------------------------------------------
% \subsection{Sign of \texorpdfstring{$A=mc-q$}{A = mc - q} at the end states}

% At constant end states of an IGR TW profile one has $S(\xi)\to 0$, hence from the integrated momentum balance
% \[
% q=mU_\pm+\tilde{p}(R_\pm,e_\pm)=mU_\pm+p(R_\pm).
% \]
% Using $U_\pm-c=m/R_\pm$ we obtain
% \begin{equation}\label{eq:app-A-sign}
%     A=mc-q
%     =m(c-U_\pm)-p(R_\pm)
%     =-\frac{m^2}{R_\pm}-p(R_\pm)\le 0,
% \end{equation}
% with strict negativity whenever either $m \neq 0$ (non-constant profile) or $p(R_\pm)>0$. 

%============================================================================================
\section{Alternative EOS admissibility criterion}
\label{appendix:eos-alt-criterion}
\renewcommand{\theequation}{B.\arabic{equation}}
\renewcommand{\theHequation}{B.\arabic{equation}}
\setcounter{equation}{0}
This appendix records a second, more direct admissibility criterion for reduced-pressure
convexity. Compared with \Cref{appendix:barotropic_eos}, it is typically easier to
check but more restrictive. In particular, it imposes the additional sign condition $e'(R)\ge 0$
along the TW curve, which is not part of the standing assumptions in the body of the paper.
As in \Cref{appendix:barotropic_eos}, $\tilde{p}(\rho,e)$ denotes the physical EOS pressure.

Starting from \Cref{eq:app-ptilde-second-TW},
\[
p''(R)
=
\Big(\tilde{p}_{\rho\rho}+\frac{m^2}{R^4}\tilde{p}_e\Big)
+2\Big(\tilde{p}_{\rho e}-\frac{1}{R}\tilde{p}_e\Big)e'(R)
+\tilde{p}_{ee}\,\big(e'(R)\big)^2,
\qquad (\rho,e)=(R,e(R)).
\]

\begin{proposition}[Alternative sufficient criterion]
\label{prop:app-alt-convexity}
Assume along the TW curve $(R,e(R))$ on the profile density interval that
\begin{equation}\label{eq:app-alt-conds}
    \tilde{p}_{\rho\rho}\ge 0,\qquad
    \tilde{p}_{ee}\ge 0,\qquad
    \tilde{p}_e\ge 0,\qquad
    \tilde{p}_{\rho e}\ge \frac{1}{R}\tilde{p}_e,\qquad
    e'(R)\ge 0.
\end{equation}
Then $p''(R)\ge 0$ on that interval. In particular, if $m\neq 0$ and $\tilde{p}_e>0$, then
$p''(R)>0$.
\end{proposition}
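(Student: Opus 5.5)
The plan is to work directly from the quadratic-form representation \Cref{eq:app-ptilde-second-TW} and show that each of its three terms is nonnegative under \Cref{eq:app-alt-conds}. No cancellation between terms is needed. Throughout, all partial derivatives of $\tilde p$ are evaluated at $(\rho,e)=(R,e(R))$ with $R$ in the profile density interval, so the hypotheses apply pointwise.

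\emph{First term.} We have $B(R)=\tilde p_{\rho\rho}+\frac{m^2}{R^4}\tilde p_e$. Here $\tilde p_{\rho\rho}\ge 0$, $\tilde p_e\ge 0$, and $m^2/R^4\ge 0$, so $B(R)\ge 0$. If in addition $m\neq 0$ and $\tilde p_e>0$, then $\frac{m^2}{R^4}\tilde p_e>0$, and hence $B(R)>0$.

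\emph{Cross term.} This is $2\delta(R)e'(R)$ with $\delta=\tilde p_{\rho e}-\tilde p_e/R$. The hypothesis $\tilde p_{\rho e}\ge \tilde p_e/R$ gives $\delta\ge 0$, and the assumed sign condition gives $e'(R)\ge 0$. So the product is nonnegative. This is the only place the extra monotonicity assumption $e'\ge 0$ enters. Without it, we would need the discriminant condition of \Cref{prop:app-convexity} instead.

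\emph{Last term.} We have $\tilde p_{ee}(e')^2\ge 0$ because $\tilde p_{ee}\ge 0$.

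Summing the three terms gives $p''(R)\ge B(R)\ge 0$, which is the convexity claim. The strict claim follows from the strict positivity of $B$ noted above, since the other two terms are nonnegative. There is no real obstacle here. The one point requiring care is to use the form \Cref{eq:app-ptilde-second-TW}, in which $e''$ has been eliminated via the TW identity \Cref{eq:app-TW-identity}, rather than the raw chain-rule formula \Cref{eq:app-ptilde-second-basic}. In the raw formula, the term $\tilde p_e e''$ has no definite sign, because $A$ may be negative.
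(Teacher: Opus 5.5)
Your proposal is correct and matches the paper's proof: both use the form of $p''$ in which $e''$ has been eliminated via the TW identity, check that each of the three terms is nonnegative under the stated sign conditions, and get strict positivity from the $\frac{m^2}{R^4}\tilde{p}_e$ term when $m\neq 0$ and $\tilde{p}_e>0$. Your side remark about why the raw chain-rule formula is unsuitable is also correct, since $e''=\frac{m^2}{R^4}-\frac{2}{R}e'$ can be negative even when $e'\ge 0$.
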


\begin{proof}
Under \Cref{eq:app-alt-conds}, each term in \Cref{eq:app-ptilde-second-TW} is nonnegative:
\[
\tilde{p}_{\rho\rho}+\frac{m^2}{R^4}\tilde{p}_e\ge 0,\qquad
2\Big(\tilde{p}_{\rho e}-\frac{1}{R}\tilde{p}_e\Big)e'(R)\ge 0,\qquad
\tilde{p}_{ee}\,\big(e'(R)\big)^2\ge 0.
\]
Hence $p''(R)\ge 0$. If additionally $m\neq 0$ and $\tilde{p}_e>0$, then
$\frac{m^2}{R^4}\tilde{p}_e>0$, so $p''(R)>0$.
\end{proof}

\begin{remark}[Growth interpretation]
Let $f(\rho,e):=\tilde{p}_e(\rho,e)$. The coupling condition
$\tilde{p}_{\rho e}\ge \frac{1}{R}\tilde{p}_e$ is equivalent to
\[
\partial_\rho f(\rho,e)\big|_{(\rho,e)=(R,e(R))}\ge \frac{f(R,e(R))}{R}.
\]
Thus, at the states sampled by the TW curve, the thermal response
$\tilde p_e$ must grow at least linearly in density when differentiated at fixed
internal energy.
\end{remark}

\begin{remark}[Separable EOS]
For $\tilde{p}(\rho,e)=g(e)\,h(\rho)$ with
$g\ge 0$, $g'\ge 0$, $g''\ge 0$, $h\ge0$, $h''\ge 0$, and
$\rho h'(\rho)\ge h(\rho)$, condition \Cref{eq:app-alt-conds} reduces to
$e'(R)\ge 0$ plus the stated sign/growth assumptions. In this case
\Cref{prop:app-alt-convexity} applies directly.
\end{remark}

%============================================================================================
\section{Numerical computation of the IGR shock profile} \label{appendix:numerics}
\renewcommand{\theequation}{C.\arabic{equation}}
\renewcommand{\theHequation}{C.\arabic{equation}}
\setcounter{equation}{0}

We compute IGR shock profiles by solving the scalar density equation on a truncated
interval $[-L,L]$ using a finite-difference damped-Newton method. For the numerical
examples we take a polytropic reduced pressure $p(R)=\kappa R^\gamma$, with
$\kappa>0$ and $\gamma\ge1$, and compute $(c,m,q)$ from RH-compatible end states
using \Cref{eq:wavespeed-and-fluxes}. The residual is formed from the expanded
density equation
\[
\frac{\mathcal{S}(R)}{R}
-\alpha\,\frac{\mathcal{S}'(R)}{R}\,\ddot{R}
-\alpha\left(\frac{\mathcal{S}''(R)}{R}
-\frac{\mathcal{S}'(R)}{R^2}
+\frac{2m^2}{R^4}\right)\dot{R}^{\,2}
=0.
\]

On a uniform grid $\xi_j=-L+jh$, $h=2L/N$, we impose
$R_0=R_-$ and $R_N=R_+$ and solve for the interior values
$\mathbf R=(R_1,\ldots,R_{N-1})$. At interior nodes we use the centered
approximations
\[
D_1R_j=\frac{R_{j+1}-R_{j-1}}{2h},
\qquad
D_2R_j=\frac{R_{j+1}-2R_j+R_{j-1}}{h^2},
\]
giving the nonlinear residual
\[
F_j(\mathbf R)
=
\frac{\mathcal S(R_j)}{R_j}
-\alpha\frac{\mathcal S'(R_j)}{R_j}D_2R_j
-\alpha\left(
\frac{\mathcal S''(R_j)}{R_j}
-\frac{\mathcal S'(R_j)}{R_j^2}
+\frac{2m^2}{R_j^4}
\right)(D_1R_j)^2.
\]

The nonlinear system $F(\mathbf R)=0$ is solved by damped Newton iteration with a
finite-difference Jacobian and residual-decreasing backtracking. Trial states are
constrained to remain positive. To select the physical branch, we use a projected
iteration: before residual comparison, trial profiles are projected onto the
monotone class, using cumulative maxima for increasing shocks and cumulative
minima for decreasing shocks, followed by clipping to the interval
$[R_-,R_+]$. For stronger shocks, we use continuation in the compression ratio
$r=R_+/R_-$. A sequence of increasing values of $r$ is solved, using each
converged profile as initial data for the next solve. Since the traveling wave is
translation-invariant, profiles are recentered between continuation steps so that
the midpoint level $\frac12(R_-+R_+)$ remains near the center of $[-L,L]$. 

\end{document}